\renewcommand{\arraystretch}{1.2}
\newtheorem{theorem}{Theorem}[section]
\newtheorem{lemma}{Lemma}[section]
\newtheorem{corollary}{Corollary}[section]
\newtheorem{proposition}{Proposition}[section]
\newtheorem{definition}{Definition}[section]
\newtheorem{example}{Example}[section]
\newcommand{\ignore}[1]{}
\renewcommand{\arraystretch}{1.2}
\newtheorem{alg}{Algorithm}
\def\ba{\begin{array}}
\def\ea{\end{array}}
\def\beq{\begin{equation}}
\def\eeq{\end{equation}}
\def\bea{\begin{eqnarray}}
\def\eea{\end{eqnarray}}
\def\beann{\begin{eqnarray*}}
\def\eeann{\end{eqnarray*}}
\def\R{\mathbb{R}}
\def\Diag{\textup{Diag}}
\def\inte{\textup{int}}
\def\rank{\textup{rank}}
\def\trace{\textup{Tr}}
\def\Null{\textup{null}}
\def\ln{\textup{ln}}
\def\exp{\textup{exp}}
\newcommand{\symm}{\mathbb S}
\newcommand{\tx}[1]{\texttt{#1}}
\title{\bf Domain-Driven Solver (DDS) Version 2.0:\\ a MATLAB-based Software Package for Convex Optimization Problems in Domain-Driven Form}
\author{
Mehdi Karimi \and Levent Tun\c{c}el}
\thanks{
Mehdi Karimi (m7karimi@uwaterloo.ca) and Levent Tun\c{c}el (ltuncel@math.uwaterloo.ca): Department of
Combinatorics and Optimization, University
of Waterloo, Waterloo, Ontario N2L 3G1, Canada. \\ Research of the authors was supported in part by Discovery Grants from NSERC and by U.S. Office of Naval Research under award numbers: N00014-12-1-0049, N00014-15-1-2171 and N00014-18-1-2078.}
\date{August 7, 2019 (arXiv: 1908.03075); revised: \today}
\let\oldtocsection=\tocsection
\let\oldtocsubsection=\tocsubsection
\let\oldtocsubsubsection=\tocsubsubsection
\renewcommand{\tocsection}[2]{\hspace{0em}\oldtocsection{#1}{#2}}
\renewcommand{\tocsubsection}[2]{\hspace{2em}\oldtocsubsection{#1}{#2}}
\renewcommand{\tocsubsubsection}[2]{\hspace{3em}\oldtocsubsubsection{#1}{#2}}
\begin{document}
{\begin{abstract}
Domain-Driven Solver (DDS) is a MATLAB-based software package for convex optimization problems in Domain-Driven form \cite{karimi_arxiv}. The current version of DDS accepts every combination of the following function/set constraints: (1)  symmetric cones (LP, SOCP, and SDP); (2) quadratic constraints that are SOCP representable; (3) direct sums of an
arbitrary collection of 2-dimensional convex sets defined as the epigraphs of univariate convex
functions (including as special cases geometric programming and entropy programming); (4) generalized power cone; (5) epigraphs of matrix norms (including
as a special case minimization of nuclear norm over a linear subspace); (6) vector relative entropy; (7) epigraphs of quantum
entropy and quantum relative entropy; and (8) constraints involving hyperbolic polynomials. DDS is a practical implementation of the infeasible-start primal-dual algorithm designed and analyzed in \cite{karimi_arxiv}. This manuscript contains the users' guide, as well as theoretical results needed for the implementation of the algorithms. To help the users, we included many examples. We also discussed some implementation details and techniques we used to improve the efficiency and further expansion of the software to cover the emerging classes of convex optimization problems.  
\end{abstract} }
\maketitle

\pagestyle{myheadings} \thispagestyle{plain}
\markboth{KARIMI and TUN{\c C}EL}
{Domain-Driven Solver (DDS)}
\setcounter{tocdepth}{2}
\newpage
{
\tableofcontents
\newpage
}
\section{Introduction}\label{sec:intro}
The code DDS (Domain-Driven Solver) solves convex optimization problems of the form
\begin{eqnarray} \label{main-p}
\inf _{x} \{\langle c,x \rangle :  Ax \in D\},
\end{eqnarray}
where $x \mapsto Ax : \R^n \rightarrow \R^m$ is a linear embedding, $A$ and $c \in \mathbb \R^n$ are given, and $D \subset \mathbb \R^m$ is a closed convex set 
defined as the closure of the domain of a $\vartheta$-\emph{self-concordant (s.c.) barrier} $\Phi$ \cite{interior-book,lectures-book}. In practice, the set $D$ is typically formulated as $D=D_1 \oplus \cdots \oplus D_\ell$, where $D_i$ is associated with a s.c.\ barrier $\Phi_i$, for $i \in \{1,\ldots,\ell\}$. Every input constraint for DDS may be thought of as either the convex set it defines or the corresponding s.c.\ barrier. 

The current version of DDS accepts many functions and set constraints as we explain in this article. If a user has a nonlinear convex objective function $f(x)$ to minimize, one can introduce a new variable $x_{n+1}$ and minimize a linear function $x_{n+1}$ subject to the convex set constraint $f(x) \leq x_{n+1}$ (and other convex constraints in the original optimization problem). As a result, in this article we will talk about representing functions and convex set constraints interchangeably.  The algorithm underlying the code also uses the Legendre-Fenchel (LF) conjugate $\Phi_*$ of $\Phi$ if it is computationally efficient to evaluate $\Phi_*$ and its first (and hopefully second) derivative. 
Any new discovery of a s.c.\ barrier allows DDS to expand the classes of convex optimization problems it can solve as any new s.c.\ barrier $\Phi$ with a computable LF conjugate can be easily added to the  code. DDS is a practical implementation of the primal-dual algorithm designed and analyzed in \cite{karimi_arxiv}, which has the current best iteration complexity bound available for conic formulations. Stopping criteria for DDS and the way DDS suggests the status (``has an approximately optimal solution", ``is infeasible", ``is unbounded", etc.) is based on the analyses in \cite{karimi_status_arxiv}. 

Even though there are similarities between DDS and some modeling systems such as CVX \cite{cvx} (such as the variety input constraints), there are major differences, including:
\begin{itemize}
\item DDS is not just a modeling system and it uses its own algorithm.
The algorithm used in DDS is an infeasible-start primal-dual path-following algorithm, and is of predictor corrector type \cite{karimi_arxiv}. 
\item The modeling systems for convex optimization that rely on SDP solvers have to use approximation for set constraints which are not efficiently representable by spectrahedra (for example epigraph of functions involving $exp$ or $\ln$ functions). However, DDS uses a s.c.\ barrier specifically well-suited to each set constraint without such approximations.  This enables DDS to return proper certificates for all the input problems.
\item As far as we know, some set constraints such as hyperbolic ones, are not accepted by other modeling systems. Some other constraints such as epigraphs of matrix norms, and those involving quantum entropy and quantum relative entropy are handled more efficiently than other existing options.
\end{itemize}

The main part of the article is organized to be more as a users' guide, and contains the minimum theoretical content required by the users. In Section \ref{sec:how}, we explain how to install and use DDS. Sections \ref{sec:LP}-\ref{sec:eq} explain how to input different types of set/function constraints into DDS. Section \ref{sec:num} contains several numerical results and tables. Many theoretical foundations needed by DDS are included in the appendices. The LHS matrix of the linear systems of equations determining  the predictor and corrector steps have a similar form. In Appendix  \ref{appen:LS}, we explain how such linear systems are being solved for DDS. Some of the constraints accepted by DDS are involving matrix functions and many techniques are used in DDS to efficiently evaluate these functions and their derivatives, see Appendices \ref{sec:imp} and \ref{appen:quantum}. DDS heavily relies on efficient evaluation of LF conjugates of s.c.\ barrier functions. For some of the functions, we show in the main text how to efficiently calculate the LF conjugate. For some others, the calculations are shifted to the appendices, for example Appendix \ref{app:LF-univar}. DDS uses interior-point methods, and gradient and Hessian of the s.c.\ barriers are building blocks of the underlying linear systems. Explicit formulas for the gradient and Hessian of some s.c.\ barriers and their LF conjugates are given in Appendix \ref{formulas}. Appendix \ref{sec:other-solvers} contains a brief comparison of DDS with some popular solvers and modeling systems. 

\subsection{Installation} 
The current version of DDS is written in MATLAB. This version is available from the websites:

\noindent \url{http://www.math.uwaterloo.ca/~m7karimi/DDS.html} \\
\url{https://github.com/mehdi-karimi-math/DDS}

To use DDS, the user can follow these steps:
\begin{itemize}
\item unzip DDS.zip;
\item run MATLAB in the directory DDS;
\item run the m-file \tx{DDS\_startup.m}.
\item (optional) run the prepared small examples \tx{DDS\_example\_1.m} and \tx{DDS\_example\_2.m}.
\end{itemize}
The prepared examples contain many set constraints accepted by DDS and running them without error indicates that DDS is ready to use. There is a directory \tx{Text\_Examples} in the DDS package which includes many examples on different classes of convex optimization problems. 
 \section{How to use the DDS code} \label{sec:how}
 In this section, we explain the format of the input for many popular classes of optimization problems. In practice, we typically have $D=\bar D -b$, where $\inte \bar D$ is the domain of a ``canonical" s.c.\ barrier and $b \in \R^m$. For example, for LP, we typically have $D=\R^n_++b$, where $b \in \R^m$ is given as part of the input data, and $-\sum_{i=1}^{n}\ln(x_i)$ is a s.c.\ barrier for $\R^n_+$. 
 The command in MATLAB that calls DDS is 
\begin{verbatim}
[x,y,info]=DDS(c,A,b,cons,OPTIONS);
\end{verbatim} 

\noindent {\bf Input Arguments:} \\
\noindent \tx{cons}:  A cell array that contains the information about the type of constraints. \\
\noindent \tx{c,A,b}: Input data for DDS: $A$ is the coefficient matrix, $c$ is the objective vector, $b$ is the RHS vector (i.e., the shift in the definition of the convex domain $D$). \\
\noindent \tx{OPTIONS} (optional): An array which contains information about the tolerance  and initial points.  

\noindent {\bf Output Arguments:} \\
\noindent \tx{x}: Primal point. \\
\noindent \tx{y}: Dual point which is a cell array. Each cell contains the dual solution for the constraints in the corresponding cell  in \tx{A}. \\
\noindent \tx{info}: A structure array containing performance information such as \tx{info.time}, which returns the CPU time for solving the problem. 

Note that in the Domain-Driven setup, the primal problem is the main problem, and the dual problem is implicit for the user. This implicit dual problem is: 
\begin{eqnarray} \label{main-d}
\inf _{y} \{\delta_*(y|D) :  A^\top y = -c, y \in D_*\},
\end{eqnarray}
where $\delta_*(y|D) := \sup\{ \langle y,z \rangle :  z \in D\},$ is the \emph{support function} of $D$, and $D_*$ is defined as 
\begin{eqnarray} \label{eq:leg-conj-2} 
D_*:=\{y:  \langle y , h \rangle  \leq 0, \ \ \forall h \in \text{rec}(D)\},
\end{eqnarray}
where $\text{rec}(D)$ is the \emph{recession cone} of $D$. For the convenience of users, there is a built-in function in DDS package to calculate the dual objective value of the returned $y$ vector:
\begin{verbatim}
z=dual_obj_value(y,b,cons);
\end{verbatim}
For a primal feasible point $x \in \R^n$ which satisfies $Ax \in D$ and a dual feasible point $y \in D_*$, the duality gap is defined in \cite{karimi_arxiv} as
\begin{eqnarray} \label{eq:duality-gap}
\langle c,x \rangle + \delta_*(y|D).
\end{eqnarray}
It is proved in \cite{karimi_arxiv} that the duality gap is well-defined and zero duality gap implies optimality. If DDS returns status ``solved" for a problem (\tx{info.status=1}), it means \tx{(x,y)} is a pair of approximately feasible primal and dual points, with duality gap close to zero (based on tolerance). If \tx{info.status=2}, the problem is suspected to be unbounded and the returned \tx{x} is a point, approximately primal feasible with very small objective value ($\langle c,x \rangle \leq -1/tol$). If \tx{info.status=3}, problem is suspected to be infeasible, and the returned \tx{y}  in $D_*$ approximately satisfies $A^\top y=0$ with $\delta_*(y|D) <0$. If  \tx{info.status=4}, problem is suspected to be ill-conditioned. 

The user is not required to input any part of the \tx{OPTIONS} array. The default settings are:
\begin{itemize}
\item $tol=10^{-8}$.
\item The initial points $x^0$ and $z^0$ for the infeasible-start algorithm are chosen such that, assuming $D=D_1 \oplus \cdots \oplus D_\ell$, the $i$th part of  $Ax^0+z^0$ is a canonical point in $\inte D_i$.   
\end{itemize}
However, if a user chooses to provide \tx{OPTIONS} as an input, here is how to define the desired parts: 
 \tx{OPTIONS.tol} may be given as the desired tolerance, otherwise the default $tol:=10^{-8}$ is used. \tx{OPTIONS.x0} and \tx{OPTIONS.z0} may be defined as the initial points as any pair of  points $x^0 \in \R^n$ and $z^0 \in \R^m$ that satisfy $Ax^0+z^0 \in \inte D$. If only \tx{OPTIONS.x0} is given, then $x^0$ must satisfy $Ax^0 \in \inte D$. In other words, \tx{OPTIONS.x0} is a point that strictly satisfies all the constraints.

In the following sections, we discuss the format of each input function/set constraint.
Table \ref{table:order} shows the classes of function/set constraints the current version of DDS accepts, plus the abbreviation we use to represent the constraint. 
From now on, we assume that the objective function is ``$\inf \ \langle c,x\rangle$", and we show how to add various function/set constraints. Note that \tx{A}, \tx{b}, and \tx{cons} are cell arrays in MATLAB. \tx{cons(k,1)} represents type of the $k$th block of constraints by using the abbreviations of Table \ref{table:order}. For example, \tx{cons(2,1)='LP'} means that the second block of constraints are linear inequalities. It is advisable to group the constraints of the same type in one block, but not necessary.

 \begin{center}
 \begin{table} [h]
 \caption{Function/set constraints the current version of DDS accepts, and their abbreviations. }
 \label{table:order}
 \begin{tabular} {|c|c|}
 \hline
 {\bf function/set constraint} &  {\bf abbreviation} \\ \hline
  LP   &  LP\\ \hline
 SOCP  & SOCP\\ \hline
 Rotated SOCP  & SOCPR\\ \hline
 SDP  & SDP \\ \hline
 Generalized Power Cone & GPC \\ \hline
 Quadratic Constraints & QC \\ \hline
Epigraph of a Matrix Norm  &  MN \\ \hline
 Direct sum of 2-dimensional sets  &  \\
 (geometric, entropy, and $p$-norm & TD \\
 programming)   &  \\ \hline 
 Quantum Entropy  & QE \\ \hline
 Quantum Relative Entropy  & QRE \\ \hline
 Relative Entropy & RE \\ \hline
 Hyperbolic Polynomials & HB \\ \hline
 Equality Constraints & EQ \\ \hline
 \end{tabular}
 \end{table}
 \end{center}
 
 \section{LP, SOCP, SDP} \label{sec:LP}
 
\subsection{Linear programming (LP) and second-order cone programming (SOCP)}
Suppose we want to add $\ell$ LP constraints of the form
\begin{eqnarray}  \label{LP-1-1}
 A_L^i x + b_L^i \geq 0, \ \ \ i\in\{1,\ldots,\ell\}, 
\end{eqnarray}
where $A_L^i$ is an $m_L^i$-by-$n$ matrix, as the $k$th block of constraints. Then, we define
\begin{eqnarray}\label{LP-2-1}
\texttt{A\{k,1\}}=\left [ \begin{array} {c} A_L^1 \\ \vdots \\ A_L^\ell\end{array}\right], \ \ \ \tx{b\{k,1\}}=\left [ \begin{array} {c} b_L^1 \\ \vdots\\  b_L^\ell\end{array}\right] \nonumber \\
 \texttt{cons\{k,1\}='LP'},  \ \ \ \texttt{cons\{k,2\}}=[m_L^1, \ldots , m_L^\ell].
\end{eqnarray}
Similarly to add $\ell$ SOCP constraints of the form
\begin{eqnarray}  \label{LP-1}
\|A_S^i x + b_S^i\| \leq (g_S^i)^\top x + d_S^i, \ \ \ i\in\{1,\ldots,\ell\},
\end{eqnarray}
where $A_S^i$ is an $m_S^i$-by-$n$ matrix for $i=\in\{1,\ldots,\ell\}$,  as the $k$th block, we define
\begin{eqnarray}\label{LP-2}
\texttt{A\{k,1\}}=\left [ \begin{array} {c} (g_S^1)^\top \\ A_S^1 \\ \vdots \\  (g_S^\ell)^\top \\ A_S^\ell\end{array}\right], \ \ \ \tx{b\{k,1\}}=\left [ \begin{array} {c} d_S^1 \\ b_S^1 \\ \vdots \\  d_S^\ell \\ b_S^\ell\end{array}\right] \nonumber \\
 \texttt{cons\{k,1\}='SOCP'},  \ \ \ \texttt{cons\{k,2\}}=[m_S^1, \ldots	 , m_S^\ell].
\end{eqnarray}
Let us see an example:
\begin{example} Suppose we are given the problem:
\begin{eqnarray} \label{LP-3}
&\min& c^\top x \nonumber \\
& \text{s.t.}&  [-2,1] x \leq 1, \nonumber  \\
&&  \left\|\left [\begin{array} {cc}  2 & 1 \\ 1 & 3 \end{array} \right ] x + \left [\begin{array} {c} 3 \\ 4 \end{array} \right ] \right\|_2 \leq 2.
\end{eqnarray}
Then we define
\begin{eqnarray*} \label{LP-4}
\tx{cons\{1,1\}='LP'}, \ \ \tx{cons\{1,2\}=[1]}, \ \ \tx{A\{1,1\}}=\left [ \begin{array} {cc} 2 & -1 \end{array}\right], \ \ \ \tx{b\{1,1\}}=\left [ \begin{array} {c} 1 \end{array}\right], \nonumber \\
\tx{cons\{2,1\}='SOCP'}, \ \ \tx{cons\{2,2\}=[2]}, \ \  \tx{A\{2,1\}}=\left [ \begin{array} {cc}  0 & 0 \\ 2 & 1 \\ 1 & 3 \end{array}\right], \ \ \ \tx{b\{2,2\}}=\left [ \begin{array} {c}  2 \\ 3 \\ 4 \end{array}\right].
\end{eqnarray*}
\end{example}
The s.c.\ barriers and their LF conjugates being used in DDS for these constraints are
\begin{eqnarray} \label{eqn:app-1}
&& \Phi(z)=-\ln(z), \ \ z \in \mathbb R_+, \ \ \ \Phi_*(\eta)=-1-\ln(-\eta),  \nonumber \\
&& \Phi(t,z)=-\ln(t^2-z^\top z), \ \ \ \Phi_*(\eta,w)=-2+\ln(4)-\ln(\eta^2-w^\top w). 
\end{eqnarray}

DDS also accepts constraints defined by the rotated second order cones:
\begin{eqnarray}
\{(z,t,s) \in \R^n \oplus \R \oplus \R: \|z\|^2 \leq ts, \ t \geq 0, \ s \geq 0\},
\end{eqnarray}
which is handled by the s.c.\ barrier $-\ln(ts-z^\top z)$. The abbreviation we use is \tx{'SOCPR'}.
To add $\ell$ rotated SOCP constraints of the form
\begin{eqnarray}  \label{LP-1-SOCPR}
&&\|A_S^i x + b_S^i\|_2^2 \leq ((g_S^i)^\top x + d_S^i)((\bar g_S^i)^\top x + \bar d_S^i), \ \ \ i\in\{1,\ldots,\ell\},  \nonumber \\
&& (g_S^i)^\top x + d_S^i\geq 0, \ \ (\bar g_S^i)^\top x + \bar d_S^i \geq 0, 
\end{eqnarray}
where $A_S^i$ is an $m_S^i$-by-$n$ matrix for $i \in\{1,\ldots,\ell\}$,  as the $k$th block, we define
\begin{eqnarray}\label{LP-2-SOCPR}
\texttt{A\{k,1\}}=\left [ \begin{array} {c} (g_S^1)^\top \\ (\bar g_S^1)^\top \\ A_S^1 \\ \vdots \\  (g_S^\ell)^\top \\ (\bar g_S^\ell)^\top  \\ A_S^\ell\end{array}\right], \ \ \ \tx{b\{k,1\}}=\left [ \begin{array} {c} d_S^1 \\ \bar d_S^1 \\ b_S^1 \\ \vdots \\  d_S^\ell \\ \bar d_S^\ell \\ b_S^\ell\end{array}\right] \nonumber \\
 \texttt{cons\{k,1\}='SOCPR'},  \ \ \ \texttt{cons\{k,2\}}=[m_S^1, \ldots , m_S^\ell].
\end{eqnarray}

\subsection{Semidefinite programming (SDP)}
Consider $\ell$ SDP constraints in standard inequality (linear matrix inequality (LMI)) form:
\begin{eqnarray} \label{SDP-1}
  F^i_0+x_1 F^i_1+ \cdots+x_n F^i_n \succeq 0, \ \ \ i\in\{1,\ldots,\ell\}.
\end{eqnarray}
$F^i_j$'s are $n_i$-by-$n_i$ symmetric matrices. The above optimization problem is in the matrix form. To formulate it in our setup, we need to write it in the vector form. DDS has two internal functions \tx{sm2vec} and \tx{vec2sm}. \tx{sm2vec} takes an $n$-by-$n$ symmetric matrix and changes it into a vector in $\R^{n^2}$  by stacking the columns of it on top of one another in order. 
\tx{vec2sm} changes a vector into a symmetric matrix such that
\begin{eqnarray}  \label{SDP-2}
\tx{vec2sm(sm2vec(X))=X}.
\end{eqnarray}
By this definition, it is easy to check that for any pair of $n$-by-$n$ symmetric matrices $X$ and $Y$ we have
\begin{eqnarray} \label{SDP-3}
\langle X,Y \rangle = \tx{sm2vec(X)}^ \top \tx{sm2vec(Y)}.
\end{eqnarray}
To give \eqref{SDP-1} to DDS as the $k$th input block, we define:
\begin{eqnarray} \label{SDP-4}
&&\tx{A\{k,1\}}:=\left [\begin{array}{c} \tx{sm2vec}(F^1_1), \cdots, \tx{sm2vec}(F^1_n)  \\ \vdots \\ \tx{sm2vec}(F^\ell_1), \cdots, \tx{sm2vec}(F^\ell_n)\end{array} \right ], \ \ \ b\{k,1\}:=\left [ \begin{array}{c}\tx{sm2vec}(F^1_0)\\ \vdots \\ \tx{sm2vec}(F^\ell_0) \end{array} \right], \nonumber \\
&& \tx{cons\{k,1\}='SDP'}   \ \ \ \tx{cons\{k,2\}}=[n^1,\ \ldots \ , n^\ell].
\end{eqnarray}
The s.c.\ barrier used in DDS for SDP is the well-known function $-\ln(\det(X))$ defined on the convex cone of symmetric positive definite matrices. 

\begin{example} Assume that we want to find scalars $x_1$, $x_2$, and $x_3$ such that $x_1+x_2+x_3 \geq 1$ and  the maximum eigenvalue of $A_0+x_1A_1+x_2A_2+x_3A_3$ is minimized, where
\begin{eqnarray} \nonumber 
A_0=\left [ \begin{array}{ccc}2& -0.5& -0.6 \\ -0.5 & 2 & 0.4 \\-0.6 & 0.4 & 3 \end{array} \right], \ 
A_1=\left [ \begin{array}{ccc}0& 1& 0 \\ 1 & 0 & 0 \\0 & 0 & 0 \end{array} \right], \ 
A_2=\left [ \begin{array}{ccc}0& 0& 1 \\ 0 & 0 & 0 \\1 & 0 & 0 \end{array} \right], \
A_3=\left [ \begin{array}{ccc}0& 0& 0 \\ 0 & 0 & 1 \\0 & 1 & 0 \end{array} \right].
\end{eqnarray}
We can write this problem as 
\begin{eqnarray}
&\min& t \nonumber \\
&s.t.& -1+x_1+x_2+x_3 \geq 0, \nonumber \\
&& tI-(A_0+x_1A_1+x_2A_2+x_3A_3) \succeq 0.
\end{eqnarray}
To solve this problem, we define:
\begin{eqnarray*}
\tx{cons\{1,1\}='LP'}, \ \ \tx{cons\{1,2\}}=[1], \ \ \tx{cons\{2,1\}='SDP'}, \ \ \tx{cons\{2,2\}}=[3], \\
\tx{A\{1,1\}}=\left [\begin{array}{cccc} 1&1&1&0  \end{array} \right], \ \ \ \tx{b\{1,1\}}=\left [\begin{array}{c} -1 \end{array} \right], \\
\tx{A\{2,1\}}=\left [\begin{array}{cccc}  0&0&0&1 \\ -1&0&0&0 \\0&-1&0&0 \\ -1&0&0&0 \\ 0&0&0&1 \\0&0&-1&0 \\ 0 & -1& 0&0 \\ 0&0&-1&0 \\ 0&0&0&1 \end{array} \right], \ \ \ \tx{b\{2,1\}}=\left [\begin{array}{c}  -2 \\ 0.5  \\0.6 \\ 0.5 \\ -2 \\-0.4 \\ 0.6 \\ -0.4 \\ -3 \end{array} \right],
   \\
\tx{c}=(0,0,0,1)^\top. 
\end{eqnarray*}
Then \tx{DDS(c,A,b,cons)} gives the answer $x=(1.1265,0.6,-0.4,3)^\top$, which means the minimum largest eigenvalue is $3$. 
\end{example}
\section{Quadratic constraints}
Suppose we want to add the following constraints to DDS:
\begin{eqnarray} \label{QC-1}
x^\top A_i^\top Q_i A_i x + b_i^\top x + d_i  \leq 0, \ \ \ i \in \{1,\ldots,\ell\},
\end{eqnarray}
where each $A_i$ is $m_i$-by-$n$ with rank $n$, and $Q_i \in \mathbb S^{m_i}$. In general, this type of constraints may be non-convex and difficult to handle. Currently, DDS handles two cases:
\begin{itemize}
\item $Q_i$ is positive semidefinite,
\item $Q_i$ has exactly one negative eigenvalue. 
In this case, DDS considers the intersection of the set of points satisfying \eqref{QC-1} and a shifted \emph{hyperbolicity cone} defined by the quadratic inequality $y^\top Q_i y \leq 0$. 
 
\end{itemize}
To give constraints in \eqref{QC-1} as input to DDS as the $k$th block, we define
\begin{eqnarray}
\tx{A\{k,1\}}=\left [ \begin{array} {c} b_1^\top \\ A_1 \\ \vdots \\  b_l^\top \\ A_\ell\end{array}\right], \ \ \ \tx{b\{k,1\}}=\left [ \begin{array} {c} d_1 \\ 0 \\ \vdots \\  d_\ell \\ 0 \end{array}\right] \nonumber \\
\tx{cons\{k,1\}='QC'} \ \ \tx{cons\{k,2\}}=[m_1,\ldots,m_\ell], \nonumber \\
\tx{cons\{k,3,i\}}=Q_i, \ \ i \in \{1,\ldots,\ell\}. 
\end{eqnarray}
If \tx{cons\{k,3\}} is not given as the input, DDS takes all $Q_i$'s to be identity matrices. 

If $Q_i$ is positive semidefinite, then the corresponding constraint in \eqref{QC-1} can be written as
\begin{eqnarray}
&& u^\top u + w + d \leq 0 \nonumber \\
&& u := R_i A_i x,  \ \ \ w:=b_i^\top x, \ \ d:= d_i,
\end{eqnarray}
where $Q_i=R_i^\top R_i$ is a Cholesky factorization of $Q_i$. 
We associate the following s.c.\ barrier and its LF conjugate to such quadratic constraints:
\begin{eqnarray}
\Phi(u,w) &=& - \ln (- (u^\top u + w + d)), \nonumber \\
\Phi_*(y,\eta) &=& \frac{y^\top y}{4\eta}-1-d\eta-\ln (\eta).
\end{eqnarray}
If $Q_i$ has exactly one negative eigenvalue with eigenvector $v$, then $-y^\top Q_i y$ is a hyperbolic polynomial with respect to $v$. The hyperbolicity cone is the connected component of $y^\top Q_i y \leq 0$ which contains $v$ and $-\ln(-y^\top Q_i y)$ is a s.c.\ barrier for this cone. 

If for any of the inequalities in \eqref{QC-1}, $Q_i$ has exactly one negative eigenvalue while $b_i=0$ and $d_i=0$, DDS considers the hyperbolicity cone defined by the inequality as the set constraint.

\section{Generalized Power Cone}
We define the $(m,n)$-generalized power cone with parameter $\alpha$ as
\begin{eqnarray} \label{eq:GPC-1}
K^{(m,n)}_\alpha := \left\{(s,u) \in \R_+^m \oplus \R^n: \prod_{i=1}^m s_i^{\alpha_i}  \geq \|u\|_2 \right\},
\end{eqnarray} 
where $\alpha$ belongs to the simplex $\{\alpha \in \R^m_+: \sum_{i=1}^m \alpha_i = 1\}$. Note that the rotated second order cone is a special case where $m=2$ and $\alpha_1 = \alpha_2 = \frac 12$. Different s.c.\ barriers for this cone or special cases of it have been considered \cite{chares, tunccel2010self}. Chares conjectured a s.c.\ barrier for the cone in \eqref{eq:GPC-1}, which is proved in \cite{roy2018self}. This function that we use in DDS is:
\begin{eqnarray} \label{eq:GPC-2}
\Phi(s,u) = -\ln\left(\prod_{i=1}^m s_i^{2\alpha_i} - u^\top u \right) - \sum_{i=1}^{m} (1-\alpha_i) \ln(s_i).
\end{eqnarray}
To add generalized power cone constraints to DDS, we use the abbreviation 'GPC'. Therefore, if the $k$th block of constraints is GNC, we define \tx{cons\{k,1\}='GPC'}. Assume that we want to input the following $\ell$ constraints to DDS:
\begin{eqnarray} \label{eq:GPC-3}
(A_s^i x + b_s^i, A_u^i x+b_u^i)  \in K^{(m_i,n_i)}_{\alpha^i}, \ \ \ i \in  \{1,\ldots,\ell\},
\end{eqnarray}
where $A_s^i$, $b_s^i$, $A_u^i$, and $b_u^i$ are matrices and vectors of proper size. Then, to input these constraints as the $k$th block, we define \tx{cons\{k,2\}} as a MATLAB cell array of size $\ell$-by-$2$, each row represents one constraint. We then define:
\begin{eqnarray} \label{eq:GPC-4}
\tx{cons\{k,2\}\{i,1\}} &=& [m_i \ \ n_i],   \nonumber \\
\tx{cons\{k,2\}\{i,2\}} &=&  \alpha^i, \ \ \ \ i \in  \{1,\ldots,\ell\}.
\end{eqnarray}
For matrices $A$ and $b$, we define:
\begin{eqnarray} \label{eq:GPC-5}
\texttt{A\{k,1\}}=\left [ \begin{array} {c}  A_s^1 \\ A_u^1 \\ \vdots \\  A_s^\ell \\ A_u^\ell \end{array}\right], \ \ \ \tx{b\{k,1\}}=\left [ \begin{array} {c} b_s^1 \\ b_u^1 \\ \vdots \\ b_s^\ell \\ b_u^\ell \end{array}\right].
\end{eqnarray}

\begin{example}
Consider the following optimization problem with 'LP' and 'GPC' constraints:
\begin{eqnarray} \label{eq:GPC-6}
&\min&  -x_1-x_2-x_3 \nonumber \\
&s.t.&       \|x\| \leq (x_1+3)^{0.3} (x_2+1)^{0.3} (x_3+2)^{0.4}, \nonumber \\
&&           x_1,x_2,x_3 \geq 3.
\end{eqnarray}
Then we define:
\begin{eqnarray*} \label{eq:GPC-7}
&& \tx{cons\{1,1\}='GPC'}, \ \ \tx{cons\{1,2\}}=\{[3, \ \ 3], \ \   [0.3; 0.3; 0.4] \}  \\
&& \tx{A\{1,1\}}=\left [\begin{array} {cc} \tx{eye(3)} \\ \tx{eye(3)} \end{array}  \right], \ \ \tx{b\{1,1\}}= [3;1;2;0;0;0] \\
&& \tx{cons\{2,1\}='LP'}, \ \ \tx{cons\{2,2\}}=[3] \\
&& \tx{A\{2,1\}}=\left [ -\tx{eye(3)}  \right], \ \ \tx{b\{2,1\}}= [3;3;3] \\
&& c=[-1,-1,-1]. 
\end{eqnarray*}
\end{example}
\section{Epigraphs of matrix norms}
Assume that we have constraints of the form 
\begin{eqnarray} \label{EO2N-1}
&& X-UU^\top \succeq 0,  \nonumber \\
&& X=A_0+\sum_{i=1}^\ell x_i A_i, \nonumber \\
&& U=B_0+\sum_{i=1}^\ell x_i B_i,
\end{eqnarray}
where $A_i$, $i \in  \{1,\ldots,\ell\}$, are $m$-by-$m$ symmetric matrices, and $B_i$, $i \in  \{1,\ldots,\ell\}$, are $m$-by-$n$ matrices. 
The set $\{(Z,U) \in \mathbb S^m \oplus \R^{m \times n}: Z-UU^\top \succeq 0 \}$ is handled by the following s.c.\ barrier:
\begin{eqnarray} \label{EO2N-3}
\Phi(Z,U):=-\ln(\det(Z-UU^\top)),
\end{eqnarray}
 with LF conjugate
 \begin{eqnarray} \label{EO2N-4}
\Phi_*(Y,V)=-m-\frac{1}{4} \trace(V^\top Y^{-1} V)-\ln(\det(-Y)),
\end{eqnarray} 
where $Y \in \R^{m\times m}$ and $V \in \R^{m\times n}$ \cite{cone-free}. This constraint can be reformulated as an SDP constraint using a Schur complement. However, $\Phi(Z,U)$ is a $m$-s.c.\ barrier while the size of SDP reformulation is $m+n$. For the cases that $m \ll n$, using the Domain-Driven form may be advantageous. 
A special but very important application is minimizing the \emph{nuclear norm} of a matrix, which we describe in a separate subsection in the following.

DDS has two internal functions \tx{m2vec} and \tx{vec2m} for converting matrices (not necessarily symmetric) to vectors and vice versa. For an $m$-by-$n$ matrix $Z$, \tx{m2vec(Z,n)} change the matrix into a vector. \tx{vec2m(v,m)} reshapes a vector  $v$ of proper size to a matrix with $m$ rows. The abbreviation we use for epigraph of a matrix norm is MN. If the $k$th input block is of this type,  \tx{cons\{k,2\}} is a $\ell$-by-$2$ matrix, where $\ell$ is the number of constraints of this type, and each row is of the form $[m \ \ n]$. For each constraint of the form \eqref{EO2N-1}, the corresponding parts in $A$ and $b$ are defined as
\begin{eqnarray} \label{EO2N-5}
\tx{A\{k,1\}}=\left [\begin{array} {ccc} \tx{m2vec}(B_1,n) & \cdots &  \tx{m2vec}(B_\ell,n) \\  \tx{sm2vec}(A_1) & \cdots &  \tx{sm2vec}(A_\ell)  \end{array}  \right], \ \ \tx{b\{k,1\}}=\left [\begin{array} {c} \tx{m2vec}(B_0,n) \\  \tx{sm2vec}(A_0)  \end{array}  \right].
\end{eqnarray}
For implementation details involving epigraph of matrix norms, see Appendix \ref{sec:imp:mn}. Here is an example:
\begin{example}
Assume that we have matrices 
\begin{eqnarray} \label{EO2N-6}
U_0=\left [\begin{array} {ccc} 1 & 0 & 0 \\0 & 1 & 1  \end{array}  \right], \ \ U_1=\left [\begin{array} {ccc} -1 & -1 & 1 \\0 & 0 & 1  \end{array}  \right], \ \ U_2=\left [\begin{array} {ccc} 1 & 0 & 0 \\0 & 1 & 0  \end{array}  \right],
\end{eqnarray}
and our goal is to solve 
\begin{eqnarray} \label{EO2N-7}
&\min&  t \nonumber \\
&s.t.&          U U^\top  \preceq tI, \nonumber \\
&&           U=U_0+x_1U_1+x_2U_2.
\end{eqnarray}
Then the input to DDS is defined as
\begin{eqnarray*} \label{EO2N-8}
&& \tx{cons\{1,1\}='MN'}, \ \ \tx{cons\{2,1\}}=[2 \ \ 3], \\
&& \tx{A\{1,1\}}=\left [\begin{array} {ccc} \tx{m2vec}(U_1,3) & \tx{m2vec}(U_2,3) &  \tx{zeros}(6,1) \\  \tx{zeros}(4,1) & \tx{zeros}(4,1) &  \tx{sm2vec}(I_{2\times 2})  \end{array}  \right], \ \ \tx{b\{1,1\}}=\left [\begin{array} {c} \tx{m2vec}(U_0,3) \\  \tx{zeros}(4,1)  \end{array}  \right],  \nonumber \\
&& \tx{c}=[0,0,1].
\end{eqnarray*}
\end{example} 

\subsection{Minimizing nuclear norm}
 The nuclear norm of a matrix $Z$ is $\|Z\|_*:=\trace\left ((ZZ^\top) ^{1/2} \right)$. The dual norm of $\|\cdot\|_*$ is the operator 2-norm $\|\cdot\|$ of a matrix. 
Minimization of nuclear norm has application in machine learning and matrix sparsification.  
 The following optimization problems are a primal-dual pair \cite{recht2010guaranteed}. 
\begin{eqnarray} \label{eq:dual-norm-1}
\begin{array} {ccc}  
(P_N) & \min_{X}  & \|X\|_* \\
& s.t. & A(X)=b.
\end{array} \ \ \ 
\begin{array} {ccc}  
(D_N) & \max_{z}  & \langle b,z \rangle \\
& s.t. & \|A^*(z)\| \leq 1,
\end{array}
\end{eqnarray}
where $A$ is a linear transformation on matrices and $A^*$ is its adjoint. $(P_N)$ is a very popular relaxation of the problem of minimizing $\rank(X)$ subject to $A(X)=b$, with applications in  machine learning and compressed sensing. The dual problem $(D_N)$ is a special case of \eqref{EO2N-1} where $Z=I$ and $U=A^*(z)$. As we will show on an example, solving $(D_N)$ by \tx{[x,y]=DDS(c,A,b,Z)} leads us to $y$, which gives a solution for $(P_N)$.

More specifically, consider  the optimization problem 
\begin{eqnarray} \label{EO2N-21}
&\min&  \|X \|_* \nonumber \\
&s.t.&      \trace(U_iX)=c_i, \ \ \ i \in \{1,\ldots,\ell \},
\end{eqnarray}
where $X$ is $n$-by-$m$. DDS can solve the dual problem by defining
\begin{eqnarray} \label{EO2N-22}
&& \tx{A\{1,1\}}=\left [\begin{array} {ccc} \tx{m2vec}(U_1,n) & \cdots & \tx{m2vec}(U_\ell,n) \\  \tx{zeros}(m^2,1) &  \cdots & \tx{zeros}(m^2,1)  \end{array}  \right], \ \ \tx{b\{1,1\}}=\left [\begin{array} {c} \tx{zeros}(mn,1)  \\  \tx{sm2vec}(I_{m\times m})  \end{array}  \right],  \nonumber \\
&& \tx{cons\{1,1\}='MN'}, \ \ \tx{cons\{1,2\}}=[m \ \ n]. 
\end{eqnarray}
Then, if we run \tx{[x,y]=DDS(c,A,b,cons)} and define \tx{V:=(vec2m(y\{1\}(1:m*n),m))$^\top$}, then $V$ is an optimal solution for \eqref{EO2N-21}. In subsection \ref{subsec:nn}, we present numerical results for solving problem \eqref{EO2N-21} and show that for the cases that $n \gg m$, DDS can be more efficient than SDP based solvers. 
Here is an example:

\begin{example}
We consider minimizing the nuclear norm over a subspace. Consider the following optimization problem:
\begin{eqnarray} \label{EO2N-16}
&\min&  \|X \|_* \nonumber \\
&s.t.&      \trace(U_1X)=1  \nonumber \\
&&          \trace (U_2X)=2,
\end{eqnarray}
where
\begin{eqnarray} \label{EO2N-17}
U_1=\left [\begin{array} {cccc} 1 & 0 & 0 & 0\\0 & 1 & 0 & 0  \end{array}  \right], \ \ U_2=\left [\begin{array} {cccc} 0 & 0 & 1 & 0 \\ 0 & 0 & 0 &1  \end{array}  \right].
\end{eqnarray}
By using \eqref{eq:dual-norm-1}, the dual of this problem is
\begin{eqnarray} \label{EO2N-18}
&\min&  -u_1-2u_2 \nonumber \\
&s.t.&      \|u_1 U_1 + u_2 U_2\| \leq 1. 
\end{eqnarray}
To solve this problem with our code, we define
\begin{eqnarray*} \label{EO2N-19}
&& \tx{cons\{1,1\}='MN'}, \ \ \tx{cons\{1,2\}}=[2 \ \ 4], \\
&& \tx{A\{1,1\}}=\left [\begin{array} {cc} \tx{m2vec}(U_1,4) & \tx{m2vec}(U_2,4) \\  \tx{zeros}(4,1) & \tx{zeros}(4,1)  \end{array}  \right], \ \ \tx{b\{1,1\}}=\left [\begin{array} {c} \tx{zeros}(8,1)  \\  \tx{sm2vec}(I_{2\times 2})  \end{array}  \right],  \nonumber \\
&& \tx{c}=[-1,-2]. 
\end{eqnarray*}
If we solve the problem using \tx{[x,y]=DDS(c,A,b,cons)}, the optimal value is $-2.2360$. Now \tx{V:=(vec2m(y\{1\}(1:8),2))$^\top$} is the solution of \eqref{EO2N-16} with objective value $2.2360$. We have
\begin{eqnarray} \label{EO2N-20}
X^*:=V=\left [\begin{array} {cc} 0.5 & 0 \\  0 & 0.5 \\ 1 & 0 \\ 0 & 1  \end{array}  \right].
\end{eqnarray}
\end{example}

\section{Epigraphs of convex univariate functions (geometric, entropy, and $p$-norm programming)}  
DDS accepts constraints of the form 
\begin{eqnarray} \label{intro-3}
\sum_{i=1}^\ell \alpha_i f_i(a_i^\top x + \beta_i) + g^\top x + \gamma  \leq 0,  \ \ \ a_i, g \in \mathbb R^{n}, \ \ \beta_i, \gamma  \in \mathbb R,  \ \ i \in  \{1,\ldots,\ell\},
\end{eqnarray}
where $\alpha_i \geq 0$ and  $f_i(x)$, $i \in  \{1,\ldots,\ell\}$, can be any function from Table \ref{table1}. Note that every univariate convex function can be added to this table  in the same fashion.  
By using this simple structure, we can model many interesting optimization problems. Geometric programming (GP) \cite{boyd2007tutorial} and entropy programming (EP) \cite{fang2012entropy} with many applications in engineering are constructed with constraints of the form \eqref{intro-3} when $f_i(z)=e^z$ for $i\in\{1,\ldots,\ell \}$ and $f_i(z)=z\ln(z)$ for $i\in\{1,\ldots,\ell \}$, respectively.  The other functions with $p$ powers let us solve optimization problems related to $p$-norm minimization. 
\begin{table} [h] 
  \caption{Some 2-dimensional convex sets and their s.c.\ barriers.}
  \label{table1}
  \center
  \renewcommand*{\arraystretch}{1.3}
  \begin{tabular}{ |c | c | c | }
    \hline
     & set $(z,t)$ & {s.c.\ barrier $\Phi(z,t)$} \\ \hline
    1& $-\ln(z) \leq t, \ z>0$ & $-\ln(t+\ln(z))-\ln(z)$ \\ \hline
    2 & $ e^z \leq t$ & $-\ln(\ln(t)-z)-\ln(t)$ \\ \hline
    3 & $z \ln(z) \leq t, \ z>0$ & $-\ln(t-z\ln(z)) - \ln(z)$ \\ \hline 
    4 & $|z|^p \leq t, \ p \geq 1$ & $-\ln(t^{\frac 2p} - z^2) - 2\ln(t)$ \\ \hline
    5 & $-z^p \leq t, \ z>0, \ 0 \leq p \leq 1$ & $-\ln(z^p+t)-\ln(z)$ \\ \hline 
    6 & $\frac 1z \leq  t, \ z>0$ & $-\ln(zt-1)$ \\ \hline
  \end{tabular}
\end{table}
 The corresponding s.c.\ barriers used in DDS are shown in Table \ref{table1}. There is a closed form expression for the LF conjugate of the first two functions. For the last four, the LF conjugate  can be calculated to high accuracy efficiently. In Appendix \ref{app:LF-univar}, we show how to calculate the LF conjugates for the functions in Table \ref{table1} and the internal functions we have in DDS. 

To represent a constraint of the from \eqref{intro-3}, for given $\gamma \in \R$ and $\beta_i \in \R$, $i \in \{1,\ldots,\ell\}$, we can define the corresponding convex set $D$ as  
\begin{eqnarray}
D:= \left \{  (w,s_i,u_i) :  w + \gamma \leq 0, \ \ f_i(s_i + \beta_i) \leq u_i, \ \forall i \right \},
\end{eqnarray}
and our matrix $A$ represents $w=\sum_{i=1}^\ell \alpha_i u_i + g^\top x$ and $s_i=a_i^Tx$, $i \in \{1,\ldots,\ell\}$. As can be seen, to represent our set as above, we introduce some auxiliary variables $u_i$'s to our formulations. 
DDS code does this internally. Let us assume that we want to add the following $s$ constraints to our model
\begin{eqnarray} \label{eq:TD-form}
\sum _{type} \sum_{i=1}^{\ell_{type}^j} \alpha_i^{j,type} f_{type}((a_i^{j,type})^\top x + \beta_i^{j,type}) + g_{j}^ \top x + \gamma_{j}   \leq 0,  \ \ \ \ j\in\{1,\ldots,s\}.
\end{eqnarray}
From now on, $type$ indexes the rows of Table \ref{table1}. 
The abbreviation we use for these constraints is TD. Hence, if the $k$th input block are the constraints in \eqref{eq:TD-form}, then we have \tx{cons\{k,1\}='TD'}. 
\tx{cons\{k,2\}}  is a {\bf cell array} of MATLAB with $s$ rows, each row represents one constraint. For the $j$th constraint we have:

\begin{itemize}
\item \tx{cons\{k,2\}\{j,1\}} is a matrix with two columns: the first column shows the type of a function from Table \ref{table1} and the second column shows the number of that function in the constraint. Let us say that in the $j$th constraint, we have
$l_{2}^j$ functions of type 2 and  $l_{3}^j$ functions of type 3, then we have
\begin{eqnarray*}
\tx{cons\{k,2\}\{j,1\}} =\left [\begin{array} {ccc}  2 & l_{2}^j  \\ 3  & l_3^j \end{array} \right ].
\end{eqnarray*}
The types can be in any order, but the functions of the same type are consecutive and the order must match with the rows of $A$ and $b$.

\item \tx{cons\{k,2\}\{j,2\}} is a vector with the coefficients of the functions in a constraint, i.e., $\alpha_i^{j,type}$ in \eqref{eq:TD-form}. Note that the coefficients must be in the same order as their corresponding rows in $A$ and $b$. If in the $j$th constraint we 
have 2 functions of type 2 and 1 function of type 3, it starts as
\begin{eqnarray*}
\tx{cons\{k,2\}\{j,2\}}=[\alpha_1^{j,2},  \alpha_2^{j,2}, \alpha_1^{j,3}, \cdots].
\end{eqnarray*}

\end{itemize}


To add the rows to $A$, for each constraint $j$, we first add $g_{j}$, then $a_i^{j,type}$'s in the order that matches \tx{cons\{k,2\}}.  We do the same thing for vector $b$ (first $\gamma_j$, then $\beta_i^{j,type}$'s). The part of $A$ and $b$ corresponding to the $j$th constraint is as follows if we have for example five types
\begin{eqnarray}
\tx{A}=\left [ \begin{array} {c} g_j^\top \\[.3em] a_1^{j,1} \\ \vdots \\  a_{l_1^j}^{j,1}\\ \vdots \\ a_1^{j,5} \\ \vdots \\  a_{l_5^j}^{j,5} \end{array}\right], \ \ \ \tx{b}=\left [ \begin{array} {c} \gamma_j \\[.3em]  \beta_1^{j,1} \\ \vdots \\  \beta_{l_1^j}^{j,1} \\ \vdots \\ \beta_1^{j,5} \\ \vdots \\  \beta_{l_5^j}^{j5} \end{array}\right].
\end{eqnarray}
Let us see an example:
\begin{example} Assume that we want to solve
\begin{eqnarray}
&\min&  c^\top x  \nonumber \\
&\text{s.t.}&  -1.2\ln(x_2+2x_3+55) + 1.8e^{x_1+x_2+1} + x_1 -2.1 \leq 0,  \nonumber \\
&&                -3.5\ln(x_1+2x_2+3x_3-30) + 0.9e^{-x_3-3} -x_3 +1.2 \leq 0, \nonumber \\
&&                 x \geq 0.
\end{eqnarray}
For this problem, we define:
\begin{eqnarray*}
&&\tx{cons\{1,1\}='LP'}, \ \ \tx{cons\{1,2\}}=[3], \\ 
&& \tx{cons\{2,1\}='TD'}, \ \   
\tx{cons\{2,2\}} = \left\{ \left[\begin{array} {ccc} 1 & 1 \\ 2 & 1\end{array} \right ], [1.2 \ \ 1.8] \ ; \ \  \left[\begin{array} {ccc} 1 &1 \\  2 & 1\end{array} \right ],  [3.5 \ \ 0.9] \right\}, \\
&&\tx{A\{1,1\}}=\left [ \begin{array} {ccc}-1& 0& 0 \\ 0 & -1 & 0 \\0 & 0 & -1 \end{array}\right], \ \ \ 
\tx{b\{1,1\}}=\left [ \begin{array} {c}  0 \\ 0 \\ 0 \end{array}\right], \\
&&\tx{A\{2,1\}}=\left [ \begin{array} {ccc} 1 & 0 & 0\\  0 & 1 & 2 \\  1 & 1 & 0 \\  0 & 0 & -1\\ 1 & 2 & 3\\  0 & 0 & -1 \end{array}\right], \ \ \ 
\tx{b\{2,1\}}=\left [ \begin{array} {c}  -2.1 \\ 55 \\ 1 \\ 1.2  \\  -30 \\ -3 \end{array}\right].
\end{eqnarray*}

\noindent {\bf Note:} As we mentioned, modeling systems for convex optimization that are based on SDP solvers, such as CVX, have to use approximation for functions involving $exp$ and $\ln$. Approximation makes it hard to return dual certificates, specifically when the problem is infeasible or unbounded.

%
\end{example}
\subsection{Constraints involving power functions}
The difference between these two types (4 and 5) and the others is that we also need to give the value of $p$ for each function. To do that, we add another column to \tx{cons\{k,2\}}. 

{\bf Note:} For TD constraints, \tx{cons\{k,2\}} can have two or three columns. If we do not use types 4 and 5, it has two, otherwise three columns. \tx{cons\{k,2\}\{j,3\}} is a vector which contains the powers $p$ for functions of types 4 and 5. The powers are given in the same order as the coefficients in  \tx{cons\{k,2\}\{j,2\}}. If the constraint also has functions of other types, we must put 0 in place of the power. 

Let us see an example:
\begin{example}
\begin{eqnarray*}
&\min&  c^\top x  \nonumber \\
&\text{s.t.}&  2.2\exp(2x_1+3)+|x_1+x_2+x_3| ^2 + 4.5 |x_1 + x_2| ^ {2.5} + |x_2 + 2x_3|^3 + 1.3x_1 -1.9  \leq 0.
\end{eqnarray*}
For this problem, we define:
\begin{eqnarray*}
&&\tx{A\{1,1\}}=\left [ \begin{array} {ccc}1.3& 0& 0 \\ 2 & 0 & 0 \\1 & 1 & 1 \\ 1 & 1 & 0\\  0 & 1 & 2  \end{array}\right], \ \ \ 
\tx{b\{1,1\}}=\left [ \begin{array} {c}  -1.9 \\ 3 \\ 0 \\ 0 \\ 0  \end{array}\right],  \nonumber \\
&&\tx{cons\{1,1\}='TD'}, \ \  \tx{cons\{1,2\}}=\left\{ \left [\begin{array} {ccc} 2 & 1 \\ 4 & 3\end{array} \right ], \  [2.2 \ \ 1 \ \ 4.5 \ \ 1], \ [0 \ \ 2 \ \ 2.5 \ \ 3] \right\}.
\end{eqnarray*}

\end{example}

\section{Vector Relative Entropy} \label{sec:RE}
Consider the relative entropy function $f: \R_{++}^\ell \oplus \R_{++}^\ell \rightarrow \R$ defined as 
\[
f(u,z):= \sum_{i=1}^{\ell} u_i\ln(u_i) - u_i\ln(z_i).
\] 
The epigraph of this function accepts the following $(2\ell+1)$-s.c.\ barrier (see Appendix \ref{sec:vRE}):
\begin{eqnarray} \label{eq:RE-1}
\Phi(t,u,z) := -\ln \left(t-\sum_{i=1}^{\ell} u_i\ln(u_i / z_i)\right)- \sum_{i=1}^{\ell}\ln(u_i)-\sum_{i=1}^{\ell}\ln(z_i).
\end{eqnarray}
This function covers many applications as discussed in \cite{dahl2019primal}. The special case of $\ell =1$ is used for handling EXP cone in the commercial solver MOSEK. We claim that the LF of $\Phi$ can be efficiently calculated using $\ell$ one-dimensional minimizations, i.e., having the function $\theta(r)$ as the solution of $\frac{1}{\theta} - \ln(\theta) = r$.  We consider the case of $\ell = 1$ and the generalization is straightforward. We have
\begin{eqnarray} \label{eq:RE-3}
\ \ \ \ \Phi_*(\alpha,y_u,y_z):= \min_{t,z,u} \alpha t + y_z z + y_u u +\ln(t-u\ln(u)+u\ln(z))+\ln(z)+\ln(u).
\end{eqnarray}
Writing the optimality  conditions, we get (defining $T:= t-u\ln(u)+u\ln(z)$)
\begin{eqnarray} \label{eq:RE-32}
\begin{array}{lcl}
\alpha + \frac{1}{T} &=& 0 \\
y_z+\frac{u}{z T} + \frac{1}{z} &=& 0 \\
y_u-\frac{1+\ln(u)-\ln(z)}{T} + \frac{1}{u}   &=& 0
\end{array} 
\end{eqnarray}
We can see that at the optimal solution, $T = -1/\alpha$, and if we calculate $u$, we can easily get $z$. $u$ is calculated by the following equation:
\begin{eqnarray}
\frac{1}{T}+\frac{1}{u} = \frac{1}{\theta \left(-Ty_u+2+\ln(-y_z) + \ln(T) \right)}
\end{eqnarray}
Therefore, to calculate $\Phi_*$ at every point, we need to evaluate $\theta$ once. For the general $\ell$, we evaluate $\theta$ by $\ell$ times. 
 
The abbreviation we use for relative entropy is RE. So, for the $k$th block of $s$ constraints of the form 
\begin{eqnarray} \label{eq:RE-2}
f(A_u^i x + b_u^i, A_z^i x + b_z^i) + g_i^\top x + \gamma_i  \leq 0,  \ \ i \in  \{1,\ldots, s\},
\end{eqnarray}
we define \tx{cons\{k,1\} = 'RE'} and \tx{cons\{k,2\}} is a vector of length $s$ with the $i$th element equal to $2\ell+1$. We also define:
\begin{eqnarray}
\tx{A}=\left [ \begin{array} {c} g_i^\top \\[.3em] A_u^1 \\ A_z^1 \\ \vdots \\  g_s^\top \\[.3em] A_u^s  \\ A_z^s \end{array}\right], \ \ \ \tx{b}=\left [ \begin{array} {c} \gamma_1 \\[.3em]  b_u^1 \\ b_z^1 \\ \vdots \\  \gamma_s \\ b_u^s \\ b_z^s \end{array}\right].
\end{eqnarray}

\begin{example} Assume that we want to minimize a relative entropy function under a linear constraint:
\begin{eqnarray}
\min  &  (0.8x_1+1.3) \ln \left( \frac{0.8x_1+1.3}{2.1x_1+1.3x_2+1.9}\right) + (1.1x_1-1.5x_2-3.8) \ln \left( \frac{1.1x_1-1.5x_2-3.8}{3.9x_2}\right)  \nonumber \\
s.t.  & x_1 + x_2 \leq \beta. 
\end{eqnarray}
We add an auxiliary variable $x_3$ to model the objective function as a constraint. 
For this problem we define:
\begin{eqnarray*}
&&\tx{cons\{1,1\}='RE'}, \ \  \tx{cons\{1,2\}}=\left [5 \right ] \\
&&\tx{A\{1,1\}}=\left [ \begin{array} {ccc}0& 0& -1 \\0.8 & 0 & 0 \\  1.1 & -1.5 & 0\\ 2.1 & 1.3 & 0 \\ 0 & 3.9 & 0  \end{array}\right], \ \ \ 
\tx{b\{1,1\}}=\left [ \begin{array} {c}  0  \\ 1.3 \\ -3.8 \\ 1.9 \\ 0 \end{array}\right],  \nonumber \\
&&\tx{cons\{2,1\}='LP'}, \ \  \tx{cons\{2,2\}}=\left [1 \right ] \\
&&\tx{A\{2,1\}}=[-1 \ \ -1 \ \ 0], \ \ \ \tx{b\{2,1\}}=[\beta].
\end{eqnarray*}
If we solve this problem by DDS, for $\beta = 2$ the problem is infeasible, and for $\beta = 7$ it returns an optimal solution $x^*:=(5.93,1.06)^\top$ with the minimum of function equal to $-7.259$. 
\end{example}
\section{Quantum entropy and Quantum relative entropy} \label{sec:QE}
Quantum entropy and quantum relative entropy functions are important in quantum information processing. DDS 2.0 accepts constraints involving these two functions. Let us start with the main definitions. 
Consider a function $f: \mathbb R \rightarrow \mathbb R\cup\{+\infty\}$ and let $X\in \mathbb H^n$ be a Hermitian matrix (with entries from $\mathbb C$) with a spectral decomposition $X=U \Diag(\lambda_1,\ldots,\lambda_n) U^*$, where $\Diag$ returns a diagonal matrix with the given entries on its diagonal and $U^*$ is the conjugate transpose of a unitary matrix $U$. We define the \emph{matrix extension} $F$ of $f$ as  $F(X) := U \Diag(f(\lambda_1),\ldots,f(\lambda_n)) U^*$. Whenever we write $\phi(X):=\trace(F(X))$, we mean a function $\phi: \mathbb H^n \rightarrow \R\cup\{+\infty\}$ defined as
\begin{eqnarray}\label{eq:fun_cal_1}
\phi(X):= \left\{ \begin{array}{ll}
\trace(U \Diag(f(\lambda_1),\ldots,f(\lambda_n)) U^*)& \text{if $f(\lambda_i) \in \R, \ \forall i$}, \\
+\infty  &  \text{o.w.} 
\end{array} \right.
\end{eqnarray} 
Study of such matrix functions go back to the work of L\" owner as well as Von-Neumann (see \cite{davis1957all}, \cite{lewis2003mathematics}, and the references therein).
A function $f: (a,b) \mapsto \mathbb R$ is said to be matrix monotone if for any two self-adjoint matrix $X$ and $Y$ with eigenvalues in $(a,b)$ that satisfy $X \succeq Y$, we have $F(X) \succeq F(Y)$. 
A function $f: (a,b) \mapsto \mathbb R$ is said to be \emph{matrix convex} if for any pair of self-adjoint matrices $X$ and $Y$ with eigenvalues in $(a,b)$, we have
\begin{eqnarray}\label{eq:fun_cal_2}
F(tX+(1-t)Y) \preceq tF(X)+(1-t)F(Y), \ \ \forall t\in (0,1). 
\end{eqnarray}
Faybusovich and Tsuchiya \cite{faybusovich2014matrix} utilized the connection between the matrix monotone functions and self-concordant functions. Let $f$ be a continuously differentiable function whose derivative is matrix monotone on the positive semi-axis and let us define $\phi$ as \eqref{eq:fun_cal_1}. Then, the function 
\begin{eqnarray}\label{eq:fun_cal_3}
\Phi(t,X):=-\ln(t-\phi(X))-\ln \det(X)
\end{eqnarray}
is a $(n+1)$-s.c. barrier for the epigraph of $\phi(X)$. This convex set has many applications. Many optimization problems arising in quantum information theory and some other areas requires dealing with the so-called \emph{quantum} or \emph{von Neumann entropy} $qe: \mathbb H^n \rightarrow \R\cup\{+\infty\}$ which is defined as $qe(X):=\trace(X \ln(X))$. If we consider $f(x):=x\ln(x)$, then $f'(x)=1+\ln(x)$ is matrix monotone on $(0,\infty)$ (see, for instance \cite{hiai2014introduction}-Example 4.2) and so we have a s.c. barrier for the set  
\[
\{(t,X) \in \R\oplus \mathbb S^n:  \trace(X\ln(X)) \leq t\}. 
\]
We have to solve the optimization problem
\begin{eqnarray}\label{eq:fun_cal_4}
\Phi_*(\eta,Y)=\sup_{t,X} t\eta+\langle X,Y \rangle +\ln(t-qe(X))+\ln \det(X),
\end{eqnarray}
to calculate the LF conjugate of \eqref{eq:fun_cal_3}, which is done in Appendix \ref{appen:quantum}.  

Another interesting function with applications in quantum information theory is \emph{quantum relative entropy} $qre: \mathbb H^n \oplus \mathbb H^n \rightarrow \R\cup\{+\infty\}$ defined as 
\[
qre (X,Y):= X\ln(X) - X\ln(Y).
\]
This function is convex as proved in \cite{tropp2015introduction}. The epigraph of $qre$ is
\[
\{(t,X,Y) \in \R\oplus \mathbb S^n \oplus \mathbb S^n:  \trace(X\ln(X)-X\ln(Y)) \leq t\}. 
\]
DDS 2.0 uses the following barrier (not yet known to be s.c.)\ for solving problems involving quantum relative entropy constraints:
\[
\Phi(t,X,Y):= \ln(t - qre(X,Y)) - \ln \det(X) - \ln \det(Y). 
\]

We can input constraints involving quantum entropy and quantum relative entropy into DDS as we explain in the following subsections. Appendix \ref{appen:quantum} contains some interesting theoretical results about quantum entropy functions and how to calculate the derivative and Hessian for the above s.c.\ barriers and also the LF conjugate given in \eqref{eq:fun_cal_4}. 

\subsection{Adding quantum entropy based constraints}
Let $qe_i : \symm^{n_i} \rightarrow \R \cup \{+\infty\}$ be quantum entropy functions and 
consider $\ell$ quantum entropy constraints of the form
\begin{eqnarray} \label{eq:QE-1}
  qe_i(A^i_0+x_1 A^i_1+ \cdots+x_n A^i_n) \leq  g_i^\top x+d_i, \ \ \ i\in\{1,\ldots,\ell\}.
\end{eqnarray}
$A^i_j$'s are $n_i$-by-$n_i$ symmetric matrices. 
To input \eqref{eq:QE-1} to DDS as the $k$th block, we define:
\begin{eqnarray} \label{eq:QE-4} 
&&\tx{cons\{k,1\}='QE'}, \ \ \tx{cons\{k,2\}}=[n_1, \ldots,n_\ell], \nonumber \\
&&\tx{A\{k,1\}}:=\left [\begin{array}{c} g_1^\top \\ \tx{sm2vec}(A^1_1), \cdots, \tx{sm2vec}(A^1_n)  \\ \vdots \\ g_\ell^\top \\ \tx{sm2vec}(A^\ell_1), \cdots, \tx{sm2vec}(A^\ell_n)\end{array} \right ], \ \ \ \tx{b\{k,1\}}:=\left [ \begin{array}{c} d_1 \\ \tx{sm2vec}(A^1_0)\\ \vdots \\ d_\ell \\ \tx{sm2vec}(A^\ell_0) \end{array} \right].
\end{eqnarray}

\begin{example}Assume that we want to find scalars $x_1$, $x_2$, and $x_3$ such that $2x_1+3x_2-x_3 \leq 5$ and  all the eigenvalues of  $H:=x_1A_1+x_2A_2+x_3A_3$ are at least 3, for 
\begin{eqnarray} \nonumber 
A_1=\left [ \begin{array}{ccc}1& 0& 0 \\ 0 & 1 & 0 \\0 & 0 & 1 \end{array} \right], \ 
A_2=\left [ \begin{array}{ccc}0& 0& 1 \\ 0 & 1 & 0 \\1 & 0 & 0 \end{array} \right], \
A_3=\left [ \begin{array}{ccc}0& 1& 0 \\ 1 & 0 & 0 \\0 & 0 & 0 \end{array} \right],
\end{eqnarray}
such that the quantum entropy $H$ is minimized.
We can write this problem as 
\begin{eqnarray}
&\min& t \nonumber \\
&s.t.& qe(x_1A_1+x_2A_2+x_3A_3) \leq t, \nonumber \\
&&  2x_1+3x_2-x_3 \leq 5, \nonumber \\
&& x_1A_1+x_2A_2+x_3A_3  \succeq 3I. 
\end{eqnarray} 
For the objective function we have $\tx{c}=(0,0,0,1)^\top$. Assume that the first and second blocks are LP and SDP as before. We define the third block of constraints as:
\begin{eqnarray*}
\tx{cons\{3,1\}='QE'}, \ \ \tx{cons\{3,2\}}=[3], \ \  \tx{b\{3,1\}}:=\left[\begin{array}{c}
0 \\ \tx{zeros}(9,1)
\end{array}	\right],  \\
\tx{A\{3,1\}}:=\left[\begin{array}{cccc} 0&0&0&1\\ \tx{sm2vec}(A1) & \tx{sm2vec}(A2) & \tx{sm2vec}(A3) & \tx{sm2vec}(\tx{zeros}(3))
\end{array}	\right].
\end{eqnarray*}
If we run DDS, the answer we get is $(x_1,x_2,x_3)=(4,-1,0)$ with $f(H)=14.63$. 
\end{example}

\subsection{Adding quantum relative entropy based constraints}
The abbreviation we use for quantum relative entropy is QRE. 
Let $qre_i : \symm^{n_i}\oplus \symm^{n_i} \rightarrow \R \cup \{+\infty\}$ be quantum relative entropy functions and 
consider $\ell$ quantum entropy constraints of the form
\begin{eqnarray*} \label{eq:QRE-1}
  qre_i(A^i_0+x_1 A^i_1+ \cdots+x_n A^i_n, B^i_0+x_1 B^i_1+ \cdots+x_n B^i_n) \leq  g_i^\top x+d_i, \ \ \ i\in\{1,\ldots,\ell\}.
\end{eqnarray*}
$A^i_j$'s and $B^i_j$'s are $n_i$-by-$n_i$ symmetric matrices. 
To input \eqref{eq:QE-1} to DDS as the $k$th block, we define:
\begin{eqnarray} \label{eq:QRE-4} 
&&\tx{cons\{k,1\}='QRE'}, \ \ \tx{cons\{k,2\}}=[n_1, \ldots,n_\ell], \nonumber \\
&&\tx{A\{k,1\}}:=\left [\begin{array}{c} g_1^\top \\ \tx{sm2vec}(A^1_1), \cdots, \tx{sm2vec}(A^1_n) \\ \tx{sm2vec}(B^1_1), \cdots, \tx{sm2vec}(B^1_n)  \\ \vdots \\ g_\ell^\top \\ \tx{sm2vec}(A^\ell_1), \cdots, \tx{sm2vec}(A^\ell_n) \\ \tx{sm2vec}(B^\ell_1), \cdots, \tx{sm2vec}(B^\ell_n) \end{array} \right ], \ \ \ \tx{b\{k,1\}}:=\left [ \begin{array}{c} d_1 \\ \tx{sm2vec}(A^1_0) \\ \tx{sm2vec}(B^1_0) \\ \vdots \\ d_\ell \\ \tx{sm2vec}(A^\ell_0) \\ \tx{sm2vec}(B^\ell_0) \end{array} \right].
\end{eqnarray}

\section{Hyperbolic polynomials} \label{sec:hyper}
Hyperbolic programming problems form one of the largest classes of convex optimization problems which have the kind of special mathematical structure amenable to more robust and efficient computational approaches. DDS 2.0 accepts hyperbolic constraints. Let us first define a hyperbolic polynomial. 
A polynomial $p(x)\in \R[x_1,\ldots,x_m]$ is said to be \emph{homogeneous} if every term has the same degree $d$. A homogeneous polynomial $p: \R^m \rightarrow \R$ is hyperbolic in direction $e \in \R^m$ if 
\begin{itemize}
\item $p(e)>0$. 
\item for every $x \in \R^m$, the univariate polynomial $p(x-te)$ has only real roots. 
\end{itemize}
Let $p(x)\in \R[x_1,\ldots,x_m]$ be hyperbolic in direction $e$. We define the \emph{eigenvalue function} $\lambda: \R^m \rightarrow \R^d$ with respect to $p$ and $e$ such that 
for every $x \in \R^m$, the elements of $\lambda(x)$ are the roots of the univariate polynomial $p(x-te)$. 
The underlying \emph{hyperbolicity cone} is defined as
\begin{eqnarray} \label{eq:hyper-1}
\Lambda_+(p,e):=\{x \in \R^m: \lambda(x) \geq 0\}. 
\end{eqnarray}

\begin{example}
The polynomial $p(x)=x_1^2-x_2^2-\cdots-x_m^2$ is hyperbolic in the direction $e:=(1,0,\ldots,0)^\top$ and the hyperbolicity cone with respect to $e$ is the second-order cone. The polynomial $p(X)=\det(X)$ defined on $\symm^n$ is hyperbolic in the direction $I$, and the hyperbolicity cone with respect to $I$ is the positive-semidefinite cone. 
\end{example}

By the above example, optimization over hyperbolicity cone is an extension of SOCP and SDP. The following theorem by G{\"u}ler gives a s.c.\ barrier for the hyperbolicity cone. 
 
\begin{theorem}[G{\"u}ler \cite{guler1997hyperbolic}] Let $p(x)$ be a homogeneous polynomial of degree $d$, which is hyperbolic in direction $e$. Then, the function $-\ln(p(x))$ is a $d$-LH s.c.\ barrier  for $\Lambda_+(p,e)$. 
\end{theorem}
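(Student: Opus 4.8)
The plan is to establish the two defining properties of a self-concordant barrier for $F(x):=-\ln(p(x))$ on the hyperbolicity cone $\Lambda_+(p,e)$: (i) the standard self-concordance inequality $|D^3 F(x)[h,h,h]| \le 2 (D^2 F(x)[h,h])^{3/2}$, and (ii) the barrier parameter bound $|D F(x)[h]| \le \sqrt{d}\,(D^2 F(x)[h,h])^{1/2}$, which yields barrier parameter $\vartheta = d$. The key structural tool is that hyperbolicity lets us factor $p$ along any line. Fixing $x \in \inte \Lambda_+(p,e)$ and a direction $h$, I would use homogeneity together with the real-rootedness of $t \mapsto p(x - te)$ to write, for the restriction of $p$ to the affine line $\{x + sh\}$, a factorization of the form $p(x+sh) = p(x)\prod_{i=1}^d (1 + s/\mu_i)$, where the $\mu_i$ are real (they are reciprocals of the roots of an associated univariate polynomial in $s$). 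Taking logarithms converts the multivariate derivative computation into a sum of one-dimensional terms.

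Concretely, I would set $g(s):=F(x+sh) = -\ln p(x+sh)$ and compute its derivatives at $s=0$. Using the factorization,
\begin{eqnarray*}
g(s) &=& -\ln p(x) - \sum_{i=1}^d \ln(1 + s/\mu_i),
\end{eqnarray*}
so that $g'(0) = \sum_i (-1/\mu_i)$, $g''(0) = \sum_i (1/\mu_i^2)$, and $g'''(0) = \sum_i (-2/\mu_i^3)$. Writing $a_i := 1/\mu_i$, the three quantities become $DF[h] = -\sum_i a_i$, $D^2 F[h,h] = \sum_i a_i^2$, and $D^3 F[h,h,h] = -2\sum_i a_i^3$. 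The self-concordance inequality (i) then reduces to the elementary scalar bound $|\sum_i a_i^3| \le (\sum_i a_i^2)^{3/2}$, which follows from the monotonicity of $\ell^p$ norms on $\R^d$ (the $\ell^3$ norm is dominated by the $\ell^2$ norm). The parameter bound (ii) reduces to $|\sum_i a_i| \le \sqrt{d}\,(\sum_i a_i^2)^{1/2}$, which is Cauchy--Schwarz. This simultaneously shows $\vartheta = d$.

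The remaining points are that $F$ is a genuine barrier (it blows up at the boundary, where $p \to 0$, so $-\ln p \to +\infty$) and that $\inte \Lambda_+(p,e)$ is open and convex, which is part of the standard theory of hyperbolic polynomials and may be invoked. I would also note logarithmic homogeneity: since $p$ is homogeneous of degree $d$, $p(\tau x) = \tau^d p(x)$ gives $F(\tau x) = F(x) - d\ln\tau$, which is the ``LH'' (logarithmically homogeneous) property asserted in the statement and which independently pins down the parameter as $d$.

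The main obstacle is establishing the line-restriction factorization rigorously and identifying the $\mu_i$ correctly. The subtlety is that hyperbolicity is defined via the direction $e$, whereas I need real-rootedness of the restriction of $p$ along an \emph{arbitrary} direction $h$ through an interior point $x$. The standard resolution is that for $x \in \inte\Lambda_+(p,e)$ the polynomial is also hyperbolic in direction $x$ itself (interior points of the hyperbolicity cone are again directions of hyperbolicity), so $t \mapsto p(h - tx)$ has only real roots; combining this with homogeneity produces exactly the required real numbers $a_i$ as a ratio of eigenvalues. Verifying this ``interior points are hyperbolicity directions'' fact, and handling degenerate cases where some $\mu_i$ are infinite (i.e.\ $h$ lies in a direction making $p$ drop degree), is the technical heart; everything after the factorization is the two scalar inequalities above.
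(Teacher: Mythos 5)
The paper does not supply a proof of this theorem; it is quoted from G\"uler \cite{guler1997hyperbolic} as a known result, so there is no in-paper argument to compare against. Your proposal is the standard (and correct) proof: restrict to a line, use G\r{a}rding's facts that the hyperbolicity cone is convex and that every point of its interior is again a direction of hyperbolicity to obtain the factorization $p(x+sh)=p(x)\prod_{i=1}^d(1+s\lambda_i)$ with real $\lambda_i$, and then reduce self-concordance to $\norm{\lambda}_3\le\norm{\lambda}_2$ and the parameter bound to Cauchy--Schwarz, with logarithmic homogeneity immediate from $p(\tau x)=\tau^d p(x)$. One small remark: the ``degenerate case'' you worry about (some $\mu_i$ infinite) disappears if you parametrize by the eigenvalues $\lambda_i$ themselves, i.e.\ the roots of $t\mapsto p(h-tx)$, rather than by their reciprocals: that polynomial always has leading coefficient $(-1)^d p(x)\neq 0$, hence exactly $d$ real roots, and a direction $h$ along which $p$ ``drops degree'' simply contributes $\lambda_i=0$, which is harmless in all three sums. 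With that adjustment the only external input is G\r{a}rding's theorem, which you correctly identify and which it is legitimate to invoke.
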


DDS handles optimization problems involving hyperbolic polynomials using the above s.c.\ barrier. A computational problem is that, currently, we do not have a practical, efficient, algorithm to evaluate the LF conjugate of $-\ln(p(x))$. Therefore, DDS uses a primal-heavy version of the algorithm for these problems, as we explain in Section \ref{sec:heavy}. 

\subsection{Different formats for inputting multivariate polynomials} \label{sec:poly-format}
To input constraints involving hyperbolic polynomials, we use a matrix named \tx{poly}. In DDS, there are different options to input a multivariate polynomial:

 \ignore{A natural way is giving a polynomials by listing all the monomials. This representation my not be efficient. For example, the polynomial $p(x)=\det(X)$ for $X\in \symm^n$ has $n!$ monomials. It would be great if every hyperbolic polynomial had a determinantal representaion, but Br{\"a}nd{\'e}n proved \cite{branden2011obstructions} that there are  polynomials $p(x)$, hyperbolic in the direction $(1,0,\ldots,0)^\top$, such that for no power $k$ we have a representation as
\begin{eqnarray} \label{eq:hyper-2}
(p(x))^k=\det(x_1I+x_2 A_2+\cdots +x_mA_m),
\end{eqnarray}
where $A_i$'s are in symmetric matrices. }

\noindent{\bf Using monomials:} In this representation, if $p(x)$ is a polynomial of $m$ variables, then \tx{poly} is an $k$-by-$(m+1)$ matrix, where $k$ is the number of monomials. In the $j$th row, the first $m$ entries are the powers of the $m$ variables in the monomial, and the last entry is the coefficient of the monomial in $p(x)$. For example, if $p(x)=x_1^2-x_2^2-x_3^2$, then
\begin{eqnarray*}
\tx{poly}:=\left[\begin{array}{cccc} 2&0&0&1 \\ 0&2&0&-1 \\ 0&0&2&-1 
\end{array}	\right].
\end{eqnarray*}
{\bf Note:} In many applications, the above matrix is very sparse. DDS recommends  that in the monomial format, \tx{poly} should be defined as a sparse matrix. 

\noindent{\bf Using straight-line program:} Another way to represent a polynomial is by a \emph{straight-line program}, which can be seen as a rooted acyclic directed graph. The leaves represent the variables or constants. Each node is a simple binary operation (such as addition or multiplication), and the root is the result of the polynomial. In this case, \tx{poly} is a $k$-by-4 matrix, where each row represents a simple operation. Assume that $p(x)$ has $m$ variables, then we define
\[
f_0=1, \ \ f_i:=x_i, \ \ i\in\{1,\ldots,m\}.
\]
The $\ell$th row of \tx{poly} is of the form $[\alpha_j  \ \  i \ \  j \ \ \Box]$, which means that 
\[
f_{m+j}=\alpha_j (f_i \ \Box \ f_j).
\]
Operations are indexed by 2-digit numbers as the following table:
\begin{center}
\begin{tabular}{|c|c|} 
\hline
operation \ $\Box$& index \\ \hline
$+$  & 11 \\ \hline
$-$  & 22 \\ \hline
$\times$  & 33 \\ \hline
\end{tabular}
\end{center}
For example, if $p(x)=x_1^2-x_2^2-x_3^2$, we have the following representation: 
\begin{eqnarray*}
\left[\begin{array}{cccc} 1&1&1&33 \\ -1&2&2&33 \\ -1&3&3&33 \\ 1&4&5&11 \\ 1&6&7&11 
\end{array}	\right].
\end{eqnarray*}
Note that straight-line program is not unique for a polynomial.

\noindent{\bf Determinantal representation:} In this case, if possible, the polynomial $p(x)$ is written as
\begin{eqnarray} \label{eq:hyper-3}
p(x)=\det(H_0+x_1H_1+x_2 H_2+\cdots +x_mH_m),
\end{eqnarray}
where $H_i$, $i \in \{0,1,\ldots,m\}$ are in $\symm^m$.  We define
\[
\tx{poly}:=[\tx{sm2vec}(H_0) \ \ \tx{sm2vec}(H_1) \ \cdots \ \tx{sm2vec}(H_m)].
\]
For example, for $p(x)=x_1^2-x_2^2-x_3^2$, we can have
\[
H_0:=\begin{pmatrix} 0&0\\0&0\end{pmatrix}, \ \ 
H_1:=\begin{pmatrix} 1&0\\0&1\end{pmatrix}, \ \
H_2:=\begin{pmatrix} 1&0\\0&-1\end{pmatrix}, \ \
H_3:=\begin{pmatrix} 0&1\\1&0\end{pmatrix}. \ \
\]

\noindent {\bf Note:} $H_1,\ldots,H_m$ must be nonzero symmetric matrices. 
\subsection{Adding constraints involving hyperbolic polynomials}
Consider a hyperbolic polynomial constraint of the form
\begin{eqnarray} \label{eq:hyper-4}
p(Ax+b) \geq 0.
\end{eqnarray}
To input this constraint to DDS as the $k$th block, $A$ and $b$ are defined as before, and different parts of \tx{cons} are defined as follows:\\

\noindent \tx{cons\{k,1\}='HB'},\\
\noindent \tx{cons\{k,2\}=} number of variables in $p(z)$. \\
\noindent \tx{cons\{k,3\}} is the \tx{poly} that can be given in one of the three formats of Subsection \ref{sec:poly-format}. \\
\noindent \tx{cons\{k,4\}} is the format of polynomial that can be $\tx{'monomial'}$, $\tx{'straight\_line'}$, or $\tx{'determinant'}$. \\
\noindent \tx{cons\{k,5\}} is the direction of hyperbolicity or a vector in the interior of the hyperbolicity cone. 

\begin{example}
Assume that we want to give constraint \eqref{eq:hyper-4} to DDS for $p(x)=x_1^2-x_2^2-x_3^2$, using the monomial format. Then, \tx{cons} part is defined as
\begin{eqnarray*} \label{eq:hyper-5}
\tx{cons\{k,1\}='HB'}, \ \ \tx{cons\{k,2\}}=[3], \\
\tx{cons\{k,3\}}=\left[\begin{array}{cccc} 2&0&0&1 \\ 0&2&0&-1 \\ 0&0&2&-1 
\end{array}	\right], \\
\tx{cons\{k,4\}='monomial'}, \ \ \tx{cons\{k,5\}}=\left[\begin{array}{c} 1\\0\\0\end{array}\right].
\end{eqnarray*}
\end{example}

{
\section{Equality constraints}  \label{sec:eq}
In the main formulation of Domain-Driven form \eqref{main-p}, equality constraints are not included. However, the current version of DDS accepts equality constraints. In other form, users can input a problem of the form
\begin{eqnarray} \label{main-p-EQ}
\inf _{x} \{\langle c,x \rangle :  Bx=d, \ Ax \in D\}.
\end{eqnarray}

To input a block of equality constraints $Bx=d$, where $B$ is a $m_{EQ}$-by-$n$ matrix, as the $k$th block, we define
\[
\tx{cons\{k,1\}='EQ'}, \ \ \tx{cons\{k,2\}}=m_{EQ}, \ \ \tx{A\{k,1\}}=B, \ \ \tx{b\{k,1\}}=d. 
\]
\begin{example}
If for a given problem with $x \in \R^3$, we have a constraint $x_2-x_3=2$, then we can add it as the $k$th block by the following definitions:
\[
\tx{cons\{k,1\}='EQ'}, \ \ \tx{cons\{k,2\}}=1, \ \ \tx{A\{k,1\}}=[0 \ \ 1 \ -1], \ \ \tx{b\{k,1\}}=[2]. 
\]
\end{example}
In the main Domain-Driven formulation \eqref{main-p}, for some theoretical and practical considerations, we did not consider the equality constraints. 
From a mathematical point of view, this is not a problem. For any matrix $Z$ whose columns form a basis for the null space of $B$, we may represent the solution set of $Bx=d$ as $x^0+Zw$, where $x^0$ is any solution of $Bx=d$. Then the feasible region in \eqref{main-p-EQ} is equivalent to:
\begin{eqnarray}
D_w:=\{w: AZw \in (D-Ax^0)\}.
\end{eqnarray}
$D-Ax^0$ is a translation of $D$ with the s.c.\ barrier $\Phi(z-Ax^0)$, where $\Phi$ is a s.c.\ barrier for $D$. Therefore, theoretically, we can reformulate \eqref{main-p-EQ} as \eqref{main-p}, where  $AZ$ replaces $A$. 
Even though this procedure is straightforward in theory, there might be numerical challenges in application. For example, if we have a nice structure for $A$, such as sparsity, multiplying $A$ with $Z$ may ruin the structure. In DDS, we use a combination of heuristics, LU and QR factorizations to implement this transition efficiently. 

\ignore{In the Domain-Driven formulation \eqref{main-p}, for the sake of simplicity, we prefer that $D$ does not contain a straight line, which is equivalent to the non-degeneracy of the corresponding s.c.\ barrier $\Phi(\cdot)$. This restriction makes it difficult to insert equality constraints. With equality constraints, the feasible region has the form
\begin{eqnarray} \label{modified D+}
\{x: Bx=d,  Ax \in D\},
\end{eqnarray}
where $B$ and $d$ are a matrix and a vector of appropriate sizes. From a mathematical point of view, this is not a problem. For any matrix $Z$ whose columns form a basis for the null space of $B$, we may represent the solution set of $Bx=d$ as $x^0+Zw$, where $x^0$ is any solution of $Bx=d$. Then the feasible region in \eqref{modified D+} is equivalent to:
\begin{eqnarray}
D_w:=\{w: AZw \in (D-Ax^0)\}.
\end{eqnarray}
$D-Ax^0$ is a translation of $D$ with the s.c.\ barrier $\Phi(z-Ax^0)$. Now we have to work with the matrix $AZ$ instead of $A$. 
Even though this procedure is straightforward in theory, there might be numerical challenges in application. For example, if we have a nice structure for $A$, such as sparsity, multiplying $A$ with $Z$ may ruin the structure. 

Finding $Z$ can be done efficiently by using a QR factorization:
\[
B^\top = \left [\begin{array} {cc} Y & Z \end{array} \right ] \left [\begin{array} {c} R \\ 0 \end{array} \right ].
\]
We can also use $Y$ and $R$ to find a solution of $Bx=d$ as 
\[
x^0=\left[ \begin{array}{c} YR^{-1}d \\ 0 \end{array} \right].
\]
 QR factorization is ideal from a numerical stability point of view. One problem with this approach is that if $A$ is sparse, it may be very costly to maintain sparsity in $AZ$. Hence, this approach might only be efficient for medium-size problems, unless one finds efficiently a $Z$ matrix that maintains the sparsity of $A$ in $AZ$.

There is another approach for finding $Z$ that is less costly, but also can give rise to numerical instabilities. Assuming that 
$B$ has full row rank, there exists a permutation matrix $P$ such that 
\[
BP=\left [\begin{array} {cc} B_1 & B_2 \end{array} \right ],
\] 
where $B_1$ is a non-singular matrix. Then we can define:
\[
Z=P\left [\begin{array} {c} -B_1^{-1} B_2 \\ I \end{array} \right ], \ \ x^0=P\left [\begin{array} {c} B_1^{-1} d \\ 0 \end{array} \right ].
\]
This approach also may suffer from losing sparsity and other special structures in $A$, as the QR approach. 
} 
}

\section{Primal-heavy version of the algorithm} \label{sec:heavy}
For some class of problems, such as hyperbolic programming, a computable s.c.\ barrier $\Phi$ is available for the set $D$, while the LF of it is not available. For these classes, DDS uses a primal-heavy version of the algorithm. In the primal-heavy version, we approximate the primal-dual system of equations for computing the search directions by approximating the gradient and Hessian of $\Phi_*$. The approximations are based on the relations between the derivatives of $\Phi$ and $\Phi_*$: for every point $z \in \inte D$ we have
\begin{eqnarray}\label{eq:primalH-1}
z=\Phi_*'(\Phi'(z)), \ \ \ \Phi_*''(\Phi'(z))=[\Phi''(z)]^{-1}.
\end{eqnarray}
Instead of the primal-dual proximity measure defined in \cite{karimi_arxiv}, we use the primal-heavy version:
\begin{eqnarray}\label{eq:primalH-2}
\left\|\frac{\tau y}{\mu}-\Phi'(u)\right\|_{[\Phi''(u)]^{-1}},
\end{eqnarray}
where $u:=Ax+\frac{1}{\tau} z^0$, $\tau$ is an artificial variable we use in the formulation of the central path (see \cite{karimi_arxiv} for details), and $\mu$ is the parameter of the central path. By \cite{karimi_arxiv}-Corollary 4.1, this proximity measures is ``equivalent" to the primal-dual one, but \eqref{eq:primalH-2} is less  efficient computationally.

By using a primal-heavy version, we lose some of the desired properties of primal-dual setup, such as the ability to move in a wider neighbourhood of the central path. Moreover, in the primal-heavy version, we have to somehow make sure that  the dual iterates $y$ are feasible (or at least the final dual iterate is). Another issue is with calculating the duality gap \eqref{eq:duality-gap}. For a general convex domain $D$, we need $\Phi_*'$ to accurately calculate $\delta_*(y|D)$ as explained in \cite{karimi_arxiv}. Note that when $D$ is a shifted cone $D=K-b$, then we have
\begin{eqnarray}\label{eq:primalH-3}
	\delta_*(y|D)=-\langle b,y \rangle. 
\end{eqnarray}
To calculate the duality gap, we can write it as the summation of separate terms for the domains, and  
if a domain with only the primal barrier is a shifted cone, we can use \eqref{eq:primalH-3}. This is the case for the current version of DDS as all the set constraints using primal-heavy version are based on a shifted convex cone. 
To make sure that the dual iterates are feasible, we choose our neighborhoods to satisfy 
\begin{eqnarray}\label{eq:primalH-4}
\left\|\frac{\tau y}{\mu}-\Phi'(u)\right\|_{\Phi''(u)} < 1,
\end{eqnarray}
and by the Dikin ellipsoid property of s.c.\ functions, $y$ iterates stay feasible.
We can specify in OPTIONS if we want to use a primal-heavy version of the algorithm in DDS by 

\tx{OPTIONS.primal=1;}

 \ignore{In Table \ref{table:compare}, we compare the number of iterations DDS takes to solve different classes of problems using the primal-dual approach and primal-heavy approach. Note that the neighborhood in the primal-dual mode is larger. 

\begin{table} [h]
\caption{Comparison of primal-dual and primal-heavy version of algorithms for some Dimacs liproblems.}
  \label{table:compare}
\begin{tabular}{|c|c|c|c|c|c|}
\hline
Problem    &   Iterations primal-dual  &   time to solve(sec) &  Iterations primal  &   time to solve(sec) \\ \hline 
 nb   &     41  &   7.02e+01 &     53  &   2.71e+02 \\  \hline 
 nb$\_$L1   &     38  &   1.20e+02 &     52  &   3.02e+02 \\  \hline 
 nb$\_$L2   &     26  &   9.07e+01 &     23 (less accuracy)  &   2.49e+02 \\  \hline 
 nb$\_$L2$\_$bessel   &     26  &   5.05e+01 &     28  &   1.36e+02 \\  \hline 
 filter48$\_$socp   &     80  &   2.28e+01 &     76 (less accuracy)  &   3.93e+01 \\  \hline 
 filtinf1   &     21  &   6.30e+00 &     22  &   9.18e+00 \\  \hline 
 truss5   &     68  &   1.94e+01 &     94 (failed)  &   4.67e+01 \\  \hline 
 truss8   &     78  &   5.46e+01 &     91 (failed)  &   1.11e+02 \\  \hline 
 copo14   &     28  &   1.06e+01 &     38  &   1.81e+01 \\  \hline 
 copo23   &     45  &   1.55e+02 &     62  &   2.54e+02 \\  \hline 
 toruspm3-8-50   &     19  &   5.68e+01 &     23  &   2.46e+02 \\  \hline 
 torusg3-8   &     24  &   6.98e+01 &      2 (failed)  &   5.05e+01 \\  \hline 
 sched$\_$50$\_$50$\_$scaled   &     81  &   1.30e+02 &     63  &   1.01e+03 \\  \hline 
 mater-3   &    125  &   7.84e+02 &    145  &   1.50e+03 \\  \hline 
 cnhil8   &     35  &   1.08e+01 &     35  &   1.50e+01 \\  \hline 
 cnhil10   &     38  &   8.10e+01 &     43  &   1.09e+02 \\  \hline 
 cphil10   &     12  &   2.20e+01 &     15  &   3.65e+01 \\  \hline 
 ros$\_$500   &     44  &   1.36e+03 &     51  &   1.77e+03 \\  \hline 
 sensor$\_$500   &     50  &   2.65e+02 &     59  &   9.40e+02 \\  \hline 
 taha1a   &     25  &   8.24e+02 &     32  &   1.36e+03 \\  \hline 
 taha1b   &     49  &   2.07e+03 &     58  &   3.04e+03 \\  \hline 
 G40mc   &     33  &   1.28e+04 &     42  &   3.24e+04 \\  \hline 
 1tc.1024   &     41  &   3.06e+03 &     52  &   6.29e+03 \\  \hline 
 yalsdp   &     26  &   1.58e+03 &     33  &   1.94e+03 \\  \hline 
\end{tabular}
\end{table}
} 



\section{Numerical Results}  \label{sec:num}
In this section, we present some numerical examples of running DDS 2.0. We performed computational experiments using the software MATLAB R2018b, on a 4-core 3.2 GHz Intel Xeon X5672 machine with 96GB of memory. Many of the examples in this section are given as mat-files in the DDS package; in the folder \tx{Test\_Examples}.

DDS pre-processes the input data and returns the statistics before and after the pre-processing. This processing includes checking the structure of the input data removing obviously redundant constraints. For example, if the size of $A$ given for a block of constraints does not match the information in \tx{cons}, DDS returns an error. The pre-processing also includes scaling the input data properly for each block of constraints. 

As mentioned in Section 2, users have the option to give the starting point of the infeasible-start algorithm as input. Otherwise, DDS chooses an easy-to-find starting point for every input set constraint. The stopping criteria in DDS are based on the status determination result in \cite{karimi_status_arxiv}, which studied how to efficiently certify different status of a given problem in Domain-Driven form, using duality theory.  \cite{karimi_status_arxiv}-Section 9 discusses stopping criteria in a practical setting. We define the following parameters for the current point $(x,\tau,y)$:
\begin{eqnarray} \label{eq:stop-4}
gap:=\frac{|\langle c,x\rangle+\frac{1}{\tau}\delta_*(y|D)|}{1+|\langle c,x\rangle|+|\frac{1}{\tau}\delta_*(y|D)|}, \ \ P_{feas}:= \frac{1}{\tau} \|z^0\|, \ \ D_{feas}:= \frac{\|\frac{1}{\tau} A^\top y + c\|}{1+\|c\|},
\end{eqnarray}
where $x$ and $y$ are the primal and dual points, and $\tau$ is the artificial variable we use (see \cite{karimi_arxiv} for details). DDS stops the algorithm and returns the status as ``solved" when we have
\begin{eqnarray} \label{eq:stop-5}
\max\{ gap, P_{feas}, D_{feas}\} \leq tol. 
\end{eqnarray}
DDS returns that status as ``infeasible" if the returned certificate $\bar y := \frac{\tau}{\mu} y$ satisfies
\begin{eqnarray} \label{eq:stop-6}
\|A^\top \bar y\| \leq tol, \ \ \  \delta_*(\bar y|D) < 0, 
\end{eqnarray}
and returns the status as ``unbounded"  if
\begin{eqnarray} \label{eq:stop-7}
\langle c, x \rangle \leq - \frac{1}{tol}. 
\end{eqnarray}
DDS has different criteria for returning ``ill-conditioned" status, which can be because of numerical inaccuracy issues. 

In the following, we see the numerical performance of DDS for different combinations of set constraints. 

%
%
%

\subsection{LP-SOCP-SDP}
In this subsection, we consider LP-SOCP-SDP instances mostly from the Dimacs library \cite{dimacs}. Note that the problems in the library are for the standard equality form and we solve the dual of the problems.
Table \ref{table:dimacs-1} shows the results. DDS has a built-in function \tx{read\_sedumi\_DDS} to read problems in SeDuMi
input format. You can use either of the following commands based on the SeDuMi format file

\begin{verbatim}
[c,A,b,cons]=read_sedumi_DDS(At,b,c,K);
[c,A,b,cons]=read_sedumi_DDS(A,b,c,K);
\end{verbatim}


\begin{table} [h] 
  \caption{Numerical results for some problem from the Dimacs library for $tol=10^{-8}$.}
  \label{table:dimacs-1}
  \renewcommand*{\arraystretch}{1.3}
  \begin{tabular}{ |c | c| c | c | c |  }
    \hline
Problem  &    size of $A$     &    Type of Constraints   &    Iterations   \\ \hline 

 nb  &    $2383*123$     &    SOCP-LP   &     42  \\  \hline 
 nb\_L1  &    $3176*915$     &    SOCP-LP   &     37    \\ \hline 
 nb\_L2  &    $4195*123$     &    SOCP-LP   &     23  \\  \hline 
 nb\_L2\_bessel  &    $2641*123$     &    SOCP-LP   &     27 \\  \hline 
 filter48\_socp  &    $3284*969 $    &    SDP-SOCP-LP   &     80  \\  \hline 
 filtinf1  &    $3395*983$     &    SDP-SOCP-LP   &     26   \\ \hline 
 truss5  &    $3301*208$     &       SDP   &     66   \\ \hline 
 truss8  &    $11914*496$     &       SDP   &     76   \\ \hline 
 copo14  &    $3108*1275$     &       SDP-LP   &     24    \\ \hline 
 copo23  &    $3108*1275$     &       SDP-LP   &     32   \\ \hline 
 toruspm3-8-50  &    $262144*512$     &       SDP   &     20    \\ \hline 
 torusg3-8  &    $262144*512$     &       SDP   &     24   \\ \hline 
 sched\_50\_50\_scaled  &    $4977*2526$     &       SOCP-LP   &     81   \\ \hline 
 mater-3  &    $39448*1439$     &    SDP-LP   &    124   \\  \hline 
 cnhil8  &    $14400*1716$     &    SDP   &     31   \\  \hline 
 cnhil10  &    $48400*5005$     &    SDP   &     35   \\  \hline 
 cphil10  &    $48400*5005$     &    SDP   &     12   \\  \hline 
 ros\_500  &    $17944*4988$     &       SDP   &     41    \\ \hline 
 sensor\_500   &    $245601*3540$    &       SDP   &     65    \\ \hline 
 taha1a  &    $231672*3002$     &       SDP   &     23      \\ \hline 
 taha1a  &    $231672*3002$     &       SDP   &     42    \\ \hline 
 G40mc  &    $4000000*2000$     &       SDP   &     33   \\ \hline 
 1tc.1024  &    $1048576*7937$     &       SDP   &     41   \\ \hline 
 yalsdp  &    $30000*5051$     &       SDP   &     17    \\ \hline 
 
\end{tabular}
\end{table}

\subsection{EXP cone optimization problems from CBLIB}
 Conic Benchmark Library (CBLIB) \cite{friberg2016cblib}  is a collection of benchmark problems for conic mixed-integer and continuous optimization. It contains a collection of optimization problems involving exponential cone, they show as EXP. Exponential cone is a special case of vector relative entropy we discussed in  Section \ref{sec:RE} when $\ell = 1$. Table \ref{table:cblib-exp} show the results of running DDS to solve problems with EXP cone from CBLIB. The files converted into DDS format can be found in the \tx{Test\_Examples} folder. 
\begin{table} [!h] 
  \caption{Numerical results for some EXP cone problems from CBLIB.}
  \label{table:cblib-exp}
  \renewcommand*{\arraystretch}{1.3}
  \begin{tabular}{ |c | c| c | c | c |  c|}
    \hline
Problem  &    size of $A$   &  size of $A_{EQ}$ &    Type of Constraints   &    Iterations   \\ \hline 
syn30m  &    $324*121$    &  $19*121$ &    EXP-LP   &     36  \\  \hline
syn30h  &    $528*249$    &  $161*249$ &    EXP-LP   &     51  \\  \hline  
syn30m02h  &    $1326*617$    &  $381*617$ &    EXP-LP   &     46  \\  \hline
syn05m02m &    $177*67$   &  $11*67$ & EXP-LP & 36 \\ \hline
syn10h &       $171*84$  & $55*84$  & EXP-LP & 37 \\ \hline
syn15m & $177*67$  & $12*67$ & EXP-LP & 31 \\ \hline
syn40h & $715*331$ & $213*331$ & EXP-LP & 48 \\ \hline
synthes1  &    $30*13$    &  $1*13$ &    EXP-LP   &     13  \\  \hline 
synthes2  &    $41*16$    &  $1*12$ &    EXP-LP   &     26  \\  \hline 
synthes3 &  $71*26$  & $3*26$ & EXP-LP & 20 \\ \hline
ex1223a & $59*17$ & $1*17$ & EXP-LP  & 11 \\ \hline
gp\_dave\_1 &    $985*705$    &  $453*705$ &    EXP-LP   &     37  \\  \hline
fiac81a &    $163*191$    &  $126*191$ &    EXP-LP   &     15  \\  \hline
fiac81b &    $65*87$    &  $57*87$ &    EXP-LP   &     15  \\  \hline
batch &    $175*58$    &  $13*58$ &    EXP-LP   &     160  \\  \hline
demb761 &    $93*131$    &  $90*131$ &    EXP-LP   &     24  \\  \hline
demb762 &    $93*131$    &  $90*131$ &    EXP-LP   &     27  \\  \hline
demb781 &    $14*19$    &  $12*19$ &    EXP-LP   &     8  \\  \hline
enpro48 &    $506*167$    &  $33*167$ &    EXP-LP   &     216  \\  \hline
isi101 &    $467*393$    &  $261*33$ &    EXP-LP   &     52 (infeas)  \\  \hline
jha88 &    $866*1131$    &  $825*1131$ &    EXP-LP   &     33  \\  \hline
LogExpCR-n20-m400 & $2023*2022$ & $1200*2022$ & EXP-LP & 40 \\ \hline
LogExpCR-n100-m400 & $2103*2102$ & $1200*2102$ & EXP-LP & 46 \\ \hline
LogExpCR-n20-m1200 & $6023*6022$ & $3600*6022$ & EXP-LP & 44 \\ \hline
LogExpCR-n500-m400 & $2503*2502$ & $1200*2502$ & EXP-LP & 68 \\ \hline
LogExpCR-n500-m1600 & $8503*8502$ & $4800*8502$ & EXP-LP & 63 \\ \hline
\end{tabular}
\end{table}
 
\subsection{Minimizing Nuclear Norm}  \label{subsec:nn}
In this subsection, we are going to use DDS to solve problem \eqref{EO2N-21}; minimizing nuclear norm of a matrix subject to linear constraints. We are interested in the case that $X$ is an $n$-by-$m$ matrix with $n \gg m$, which makes the s.c.\ barrier for the epigraph of a matrix norm more efficient than the SDP representation. We compare DDS with convex modeling system CVX, which has a function \tx{norm\_nuc} for nuclear norm. For numerical results, we assume \eqref{EO2N-21} has two linear constraints and $U_1$ and $U_2$ are 0-1 matrices generated randomly. Let us fix the number of columns $m$ and calculate the running time by changing the number of rows $n$. Figure \ref{Fig1} shows the plots of running time for both DDS and CVX. For epigraph of a matrix norm constraints, DDS does not form matrices of size $m+n$, and as can be seen in the figure, the running time is almost linear as a function of $n$. On the other hand, the CVX running time is clearly super-linear.

\begin{center}
	\begin{figure}
	\includegraphics[scale=0.7]{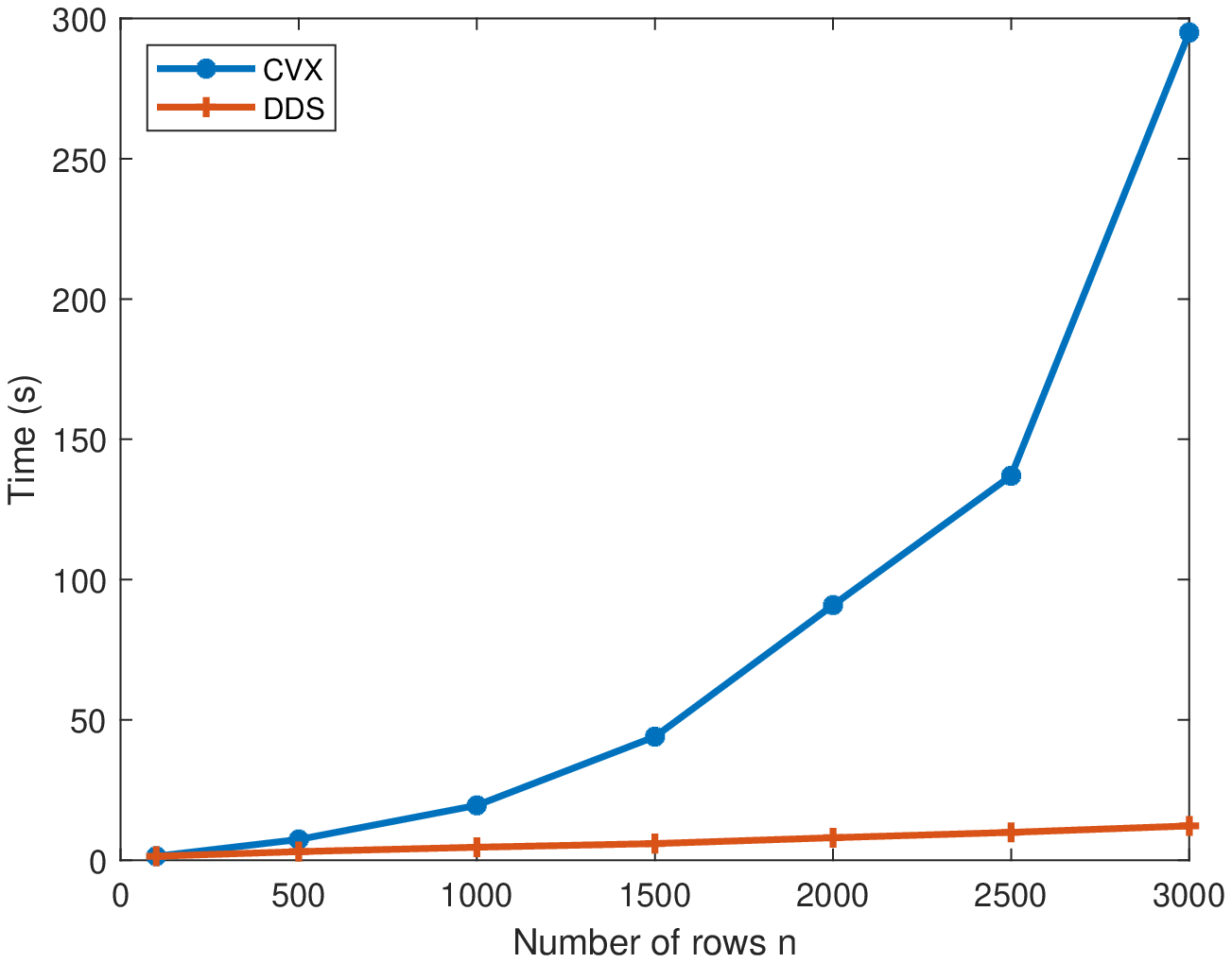}
	\caption{\small Average running  time for minimizing nuclear norm versus the number of rows in the matrix, where the number of columns is fixed at $m=20$. }
	\label{Fig1}
	\end{figure}
\end{center}

\subsection{Quantum Entropy and Quantum Relative Entropy}
In this subsection, we see optimization problems involving quantum entropy. As far as we know, there is no library for quantum entropy and quantum relative entropy functions. We have created examples of combinations of quantum entropy constraints and other types such as LP, SOCP, and $p$-norm. Problems for quantum entropy are of the form 
\begin{eqnarray} \label{eq:num-2}
&\min& t \nonumber \\
& \text{s.t.}& qe( A_0+x_1 A_1 + \cdots+x_n A_n) \leq t , \nonumber  \\
&&  \text{other constraints},
\end{eqnarray}
where $qe = \trace{X\ln(X)}$ and $A_i$'s are sparse symmetric matrices. The problems for $qre$ are also in the same format. Our collection contains both feasible and infeasible cases. The results are shown in Tables \ref{table:QE} and \ref{table:QRE} respectively for quantum entropy and quantum relative entropy. The MATLAB files in the DDS format for some of these problems are given in the \tx{Test\_Examples} folder. We compare our results to CVXQUAD, which is a collection of matrix functions to be used on top of CVX. The package is based on the paper  \cite{fawzi2019semidefinite}, which approximates matrix logarithm with functions that an be described as feasible region of SDPs. Note that these approximations increase the dimension of the problem, which is why CVX fails for the larger problems in Table \ref{table:QE}. 
\begin{table} [!h] 
  \caption{Results for problems involving Quantum Entropy optimization problems.}
  \label{table:QE}
  \renewcommand*{\arraystretch}{1.3}
  \begin{tabular}{ |c | c| c | c | c |  c| c|}
    \hline
Problem  &    size of $A$   &     Type of Constraints   &    Itr/ \ time(s)  &  CVXQUAD \\ \hline 
QuanEntr-10  & $101*21$  & QE & 11/\ 0.6  &  12/ \ 1.3  \\ \hline
QuanEntr-20  & $401*41$ & QE  & 13/\ 0.8 & 13/\ 2.4 \\ \hline
QuanEntr-50  & $2501*101$ & QE & 18/\ 1.8 &  12/\ 23.2 \\ \hline
QuanEntr-100  & $10001*201$ & QE & 24/\ 6.7  & array size error \\ \hline
QuanEntr-200  & $40001*401$ & QE & 32/\ 53.9 & array size error \\ \hline
QuanEntr-LP-10  & $111*21$  & QE-LP & 19/\ 1.1 & 16/\ 3.4 \\ \hline
QuanEntr-LP-20  & $421*41$ & QE-LP  & 23 /\ 1.6 & 20/\ 9.1 \\ \hline
QuanEntr-LP-50  & $2551*101$ & QE-LP & 34/\ 2.3 & 20/\ 103.4 \\ \hline
QuanEntr-LP-100  & $10101*201$ & QE-LP & 44/\ 14.5 & array size error\\ \hline
QuanEntr-LP-SOCP-10-infea  & $122*21$ & QE-LP-SOCP & 12/\ 0.6 (infea) & 16/\ 5.1 \\ \hline
QuanEntr-LP-SOCP-10  & $122*21$ & QE-LP-SOCP & 26/\ 1.0 & 24/\ 2.0 \\ \hline
QuanEntr-LP-SOCP-20-infea  & $441*41$ & QE-LP-SOCP & 14/\ 1.1 (infea) & 14/\ 5.8 \\ \hline
QuanEntr-LP-SOCP-20  & $441*41$ & QE-LP-SOCP & 33/\ 1.8 & 22/\ 4.1 \\ \hline
QuanEntr-LP-SOCP-100  & $10201*201$ & QE-LP-SOCP & 51/\ 20.8  & array size error  \\ \hline
QuanEntr-LP-SOCP-200  & $40402*401$ & QE-LP-SOCP & 69/\ 196  & array size error  \\ \hline
QuanEntr-LP-3norm-10-infea & $121*21$ & QE-LP-pnorm & 16/\ 0.6 (infea) & 22/\ 2.0\\ \hline
QuanEntr-LP-3norm-10 & $121*21$ & QE-LP-pnorm & 21/\ 1.2 & 19/\ 2.0 \\ \hline
QuanEntr-LP-3norm-20-infea & $441*41$ & QE-LP-pnorm & 16/\ 0.9 (infea) & 18/\ 6.5 \\ \hline
QuanEntr-LP-3norm-20 & $441*41$ & QE-LP-pnorm & 29/\ 1.1 & 20/\ 7.3 \\ \hline
QuanEntr-LP-3norm-100-infea & $10201*201$ & QE-LP-pnorm & 25/\ 15.3 (infea) &  array size error\\ \hline
QuanEntr-LP-3norm-100 & $10201*201$ & QE-LP-pnorm & 66/\ 40.5 &  array size error \\ \hline

\end{tabular}
\end{table}

\begin{table} [!h] 
  \caption{Results for problems involving Quantum Relative Entropy optimization problems.}
  \label{table:QRE}
  \renewcommand*{\arraystretch}{1.3}
  \begin{tabular}{ |c | c| c | c | c |  c| c|}
    \hline
Problem  &    size of $A$   &     Type of Constraints   &    Itr/ \ time(s)  &  CVXQUAD \\ \hline 
QuanReEntr-6  & $73*13$  & QRE & 12/\ 6.3  &  12/ \ 18.1  \\ \hline
QuanReEntr-10  & $201*21$  & QRE & 12/\  21.2  &  25/ \ 202  \\ \hline
QuanReEntr-20  & $801*41$  & QRE & 17/\  282  &  array size error  \\ \hline
QuanReEntr-LP-6  & $79*13$  & QRE-LP & 25/\ 23.2  &  21/ \ 19.6  \\ \hline
QuanReEntr-LP-6-infea  & $79*13$  & QRE-LP & 30/\ 11.8  &  28/ \ 21.5  \\ \hline
QuanReEntr-LP-10  & $101*21$  & QRE-LP & 26/\ 49.3  &  24/ \ 223  \\ \hline

\end{tabular}
\end{table}

\subsection{Hyperbolic Polynomials}
We have created a library of hyperbolic cone programming problems for our experiments. 
Hyperbolic polynomials are defined in Section \ref{sec:hyper}. We discuss three methods in this section for the creation of our library. 

\subsubsection{Hyperbolic polynomials from matriods} Let us first define a matroid.
\begin{definition}
A \emph{ground set} $E$ and a collection of \emph{independent sets } $\mathcal I \subseteq 2^E$ form a \emph{matroid} $M=(E,\mathcal I)$ if:
\begin{itemize}
\item $\emptyset \in \mathcal I$, 
\item if $A \in \mathcal I$ and $B \subset A$, then $B \in \mathcal I$, 
\item if $A,B \in \mathcal I$, and $|B| > |A|$, then there exists $b \in B \setminus A$ such that $A \cup \{b\} \in \mathcal I$. 
\end{itemize}
\end{definition}
The independent sets with maximum cardinality are called the bases of the matroid, we denote the set of bases of the matroid by $\mathcal B$. We can also assign a \emph{rank} function $r_M: 2^E \rightarrow \mathbb Z_+$ as:
\begin{eqnarray} \label{eq:hyper-7}
r_M(A) := \max \{|B| : B \subseteq A, B \in \mathcal I\}.
\end{eqnarray}
Naturally, the rank of a matroid is the cardinality of any basis of it.
The \emph{basis generating polynomial} of a matroid is defined as
\begin{eqnarray} \label{eq:hyper-6}
p_M(x) := \sum_{B \in \mathcal B}  \  \prod_{i \in B}  x_i.
\end{eqnarray}
A class of hyperbolic polynomials are the basis generating polynomials of certain matroids with the \emph{half-plane property} \cite{choe2004homogeneous}. A polynomials has the half-plane property if it is nonvanishing whenever all the variables lie in the open right half-plane. We state it as the following lemma:
\begin{lemma}
Assume that $M$ is a matroid with half-plane property. Then its basis generating polynomial is hyperbolic in any direction in the positive orthant. 
\end{lemma}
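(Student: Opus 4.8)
The plan is to deduce hyperbolicity from the half-plane property by first passing through nonvanishing of $p_M$ on the product of open \emph{upper} half-planes, and then reading off real-rootedness from a contradiction. Write $p := p_M$; it is homogeneous of degree $d = \rank(M)$ with nonnegative integer coefficients, and I will take ``positive orthant'' to mean the open orthant $\{e : e_j > 0 \ \forall j\}$ (on its boundary hyperbolicity can fail, and the argument will make clear why). The degenerate case $d = 0$, where $p \equiv 1$, is trivial, so assume $d \geq 1$.

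First I would convert the half-plane property into a stability statement. Setting $\mathbb{H} := \{z \in \C : \textup{Im}\, z > 0\}$, the claim is that $p$ has no zeros on $\mathbb{H}^m$. Given $w \in \mathbb{H}^m$, the substitution $z_j := -i\, w_j$ sends each $w_j = a_j + b_j i$ (with $b_j > 0$) to $z_j = b_j - a_j i$, which has positive real part; so $z$ lies in the product of open right half-planes and the half-plane property gives $p(z) \neq 0$. Homogeneity then yields $p(w) = (-i)^{-d} p(-i w) = (-i)^{-d} p(z) \neq 0$, as desired. This is a soft rotation/homogeneity trick.

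The main step is to show that this stability forces real roots along every positive direction. Fix $e$ in the open positive orthant and any $x \in \R^m$, and set $g(t) := p(x - t e)$, a univariate polynomial with real coefficients. The plan is to argue by contradiction: if $g$ had a non-real root $t_0 = \alpha + \beta i$, then, replacing $t_0$ by its conjugate $\bar t_0$ (another root, since $g$ is real) if needed, I may assume $\beta < 0$, and the point $\zeta := x - t_0 e$ has $j$-th imaginary part $-\beta e_j > 0$ for all $j$. Hence $\zeta \in \mathbb{H}^m$ while $p(\zeta) = g(t_0) = 0$, contradicting the previous paragraph, so all roots of $g$ are real. Finally $p(e) = \sum_{B \in \mathcal{B}} \prod_{i \in B} e_i > 0$ because $\mathcal{B}$ is nonempty and every $e_i > 0$, which supplies the second condition in the definition of hyperbolicity of $p$ in direction $e$.

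I expect the genuine content to sit entirely in the third step, and the crux is the observation that when every coordinate of $e$ is strictly positive, a single non-real root $t_0$ makes all coordinates of $x - t_0 e$ share the sign of $\textup{Im}(-t_0)$, landing the point inside $\mathbb{H}^m$; the first step is routine and the positivity $p(e) > 0$ is immediate. The only thing to watch is the boundary of the orthant, where coordinates with $e_j = 0$ contribute zero imaginary part and the contradiction is lost --- consistent with the fact that the statement concerns directions strictly inside the positive orthant.
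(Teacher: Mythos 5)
Your proof is correct and complete. Note that the paper itself does not prove this lemma at all: it simply states it and defers to the literature on the half-plane property (Choe--Oxley--Sokal--Wagner and related references), so there is no in-paper argument to compare against. What you have written is the standard bridge between Hurwitz-type stability and hyperbolicity: the rotation $z_j = -i\,w_j$ together with homogeneity of $p_M$ (all bases have cardinality $\rank(M)$, so $p_M$ is homogeneous) transfers nonvanishing from the product of right half-planes to the product of upper half-planes, and then a non-real root of $t \mapsto p(x-te)$ with $e$ in the open orthant would place $x - t_0 e$ in $\mathbb{H}^m$, a contradiction; positivity $p_M(e) > 0$ is immediate from nonnegative coefficients and nonemptiness of the basis set. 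Your closing remark about the boundary of the orthant is also the right caveat, and matches the intended reading of ``positive orthant'' as the open orthant.
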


Several classes of matroids have been proven to have the half-plane property \cite{choe2004homogeneous, branden2007polynomials, AminiBranden2018}. As the first example, we introduce the Vamos matroid. The ground set has size 8 (can be seen as 8 vertices of the cube) and the rank of matroid is 4. All subsets of size 4 are independent except 5 of them.  Vamos matroid has the half-plane property \cite{wagner2009criterion}. The basis generating polynomial is:
\begin{eqnarray*} \label{eq:hyper-8}
p_V(x):= E_4(x_1,\ldots,x_8) - x_1x_2x_3x_4-x_1x_2x_5x_6-x_1x_2x_6x_8-x_3x_4x_5x_6-x_5x_6x_7x_8,
\end{eqnarray*}	
where $E_d(x_1,\ldots,x_m)$  is the elementary  symmetric polynomial of degree $d$ with $m$ variables. Note that elementary symmetric polynomials are hyperbolic in direction of the vector of all ones.  Some extensions of Vamos matroid also satisfy the half-plane property \cite{burton2014real}. These matroids give the following Vamos-like basis generating polynomials:
\begin{eqnarray} \label{eq:hyper-9}
p_{VL}(x):= E_4(x_1,\ldots,x_{2m}) - \left(\sum_{i=2}^m x_1x_2x_{2k-1}x_{2k} \right) - \left(\sum_{i=2}^{m-1} x_{2k-1}x_{2k}x_{2k+1}x_{2k+2}\right).
\end{eqnarray}
Vamos like polynomials in  \eqref{eq:hyper-9} have an interesting property of being counter examples to one generalization of 	Lax conjecture \cite{branden2011obstructions, burton2014real}. Explicitly, there is no power $k$ and symmetric matrices $H_2, \ldots, H_{2m}$ such that  
\begin{eqnarray} \label{eq:hyper-13}
(p_{VL}(x))^k = \det(x_1I+x_2H_2+\cdots+x_{2m}H_{2m}).
\end{eqnarray}
In the DDS package, we have created a function 

\tx{poly = vamos(m)}

\noindent that returns a Vamos like polynomial as in \eqref{eq:hyper-9} in the \tx{'monomial'} format, which can be given as \tx{cons\{k,3\}} for inputting a hyperbolic constraint. \tx{vamos(4)} returns the Vamos polynomial with 8 variables. 

Graphic matroids also have the half-plane property \cite{choe2004homogeneous}. However, the hyperbolicity cones arising
from graphic matroids are isomorphic to positive semidefinite cones. This can be
proved using a classical result that the characteristic polynomial
of the bases of graphic matroids is the determinant of the Laplacian
(with a row and a column removed) of the underlying graph \cite{branden2014hyperbolicity}.
Consider a graph $G=(V,E)$ and let $\mathcal T$ be the set of all spanning trees of $G$. Then the generating polynomial defined as 
\begin{eqnarray} \label{eq:hyper-10}
T_G(x) := \sum_{T \in \mathcal T}  \  \prod_{e \in T}  x_e
\end{eqnarray}
is a hyperbolic polynomial. Several matroid operations preserve the half-plane property as proved in \cite{choe2004homogeneous}, including taking minors,
duals, and direct sums.

We have created a set of problems with combinations of hyperbolic polynomial inequalities and other types of constraints such as those arising from LP, SOCP, and entropy function.  

\begin{table} [!h] 
  \caption{Results for problems involving hyperbolic polynomials.}
  \label{table:HB}
  \renewcommand*{\arraystretch}{1.3}
  \begin{tabular}{ |c | c| c | c | c |  c| c|}
    \hline
Problem  &    size of $A$   &    var/deg of $p(z)$ & Type of Constraints   &    Itr/ \ time(s) \\ \hline 
Vamos-1  & $8*4$  & 8/4  &  HB & 7/ \ 1.5    \\ \hline
Vamos-LP-1  & $12*4$  & 8/4  &  HB-LP & 8/ \ 2    \\ \hline
Vamos-SOCP-1  & $17*5$  & 8/4  &  HB-LP-SOCP & 12/ \ 2    \\ \hline
Vamos-Entr-1  & $17*5$  & 8/4  &  HB-LP-Entropy & 9/ \ 1.2    \\ \hline
VL-Entr-1  & $41*11$  & 20/4  &  HB-LP-Entropy & 9/ \ 24    \\ \hline
VL-Entr-1  & $61*16$  & 30/4  &  HB-LP-Entropy & 12/ \ 211    \\ \hline

\end{tabular}
\end{table}

\subsubsection{Derivatives of products of linear forms}
Hyperbolicity of a polynomial is preserved under directional derivative operation (see \cite{renegar2006hyperbolic}):
\begin{theorem}  \label{thm:hyper-1}
Let $p(x)\in \R[x_1,\ldots,x_m]$ be hyperbolic in direction $e$ and of degree at least 2. Then the polynomial 
\begin{eqnarray} \label{eq:hyper-11}
p'_e(x) := (\nabla p)(x)[e]
\end{eqnarray}
is also hyperbolic in direction $e$. Moreover, 
\begin{eqnarray} \label{eq:hyper-12}
\Lambda(p, e)  \subseteq \Lambda (p'_e, e).
\end{eqnarray}
\end{theorem}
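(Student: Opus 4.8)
The plan is to reduce the multivariate statement to a classical one-variable fact about real-rooted polynomials. Fix an arbitrary $x \in \R^m$ and set $g(t) := p(x - te)$, the univariate restriction of $p$ along the line through $x$ in direction $e$. By hyperbolicity of $p$ this polynomial has only real roots, namely the eigenvalues $\lambda_i(x)$; moreover, since $p$ is homogeneous of degree $d$ with $p(e) > 0$, the coefficient of $t^d$ in $g$ is $(-1)^d p(e) \neq 0$, so $\deg g = d$ exactly. Differentiating in $t$ and applying the chain rule yields the key identity $g'(t) = -\,(\nabla p)(x - te)[e] = -\,p'_e(x - te)$, so that the univariate polynomial $t \mapsto p'_e(x - te)$ is, up to sign, the derivative $g'$.

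First I would record the two normalizing conditions that make $p'_e$ a legitimate candidate. It is homogeneous of degree $d - 1$, being a directional derivative of a degree-$d$ homogeneous polynomial, and Euler's identity gives $p'_e(e) = (\nabla p)(e)[e] = d\,p(e) > 0$; because $d \geq 2$, the polynomial $p'_e$ is nonconstant with a strictly positive value at $e$. For the hyperbolicity of $p'_e$ I would then invoke the classical fact that the derivative of a real-rooted univariate polynomial is again real-rooted (Rolle's theorem, equivalently the real case of the Gauss--Lucas theorem). Applying this to $g$ shows that $g'$, and hence $t \mapsto p'_e(x - te)$, has only real roots for every $x$; combined with $p'_e(e) > 0$, this is precisely the definition of hyperbolicity of $p'_e$ in direction $e$.

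For the cone containment I would use the sharper interlacing form of the same fact. If $g$ has real roots $\mu_1 \leq \cdots \leq \mu_d$ (the eigenvalues of $x$ with respect to $p$), then $g'$ has real roots $\nu_1 \leq \cdots \leq \nu_{d-1}$ (the eigenvalues of $x$ with respect to $p'_e$) satisfying $\mu_j \leq \nu_j \leq \mu_{j+1}$ for each $j$. Hence if $x \in \Lambda(p,e)$, so that $\mu_1 \geq 0$, then $\nu_j \geq \mu_j \geq \mu_1 \geq 0$ for all $j$, which means $x \in \Lambda(p'_e, e)$; this establishes $\Lambda(p,e) \subseteq \Lambda(p'_e, e)$. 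The hard part is the careful justification of the interlacing step in the presence of repeated roots and at boundary points of the cone, where $g$ may have a root at $t = 0$: one must argue that a polynomial splitting over $\R$ has a derivative that also splits over $\R$ with the stated interlacing, counting multiplicities, rather than only appealing to Rolle's theorem between distinct roots. Once this standard lemma on real-rooted polynomials is in hand, both conclusions follow uniformly in $x$.
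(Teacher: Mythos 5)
Your proof is correct. The paper itself gives no proof of this theorem; it is quoted from Renegar's work on hyperbolic programs and derivative relaxations, and your argument is precisely the standard one used there: restrict to the line $t \mapsto p(x-te)$, observe that $p'_e(x-te)$ is (up to sign) the $t$-derivative of that real-rooted univariate polynomial, invoke Euler's identity for $p'_e(e)=d\,p(e)>0$, and use Rolle/Gauss--Lucas with multiplicities for real-rootedness and the interlacing that yields $\Lambda(p,e)\subseteq\Lambda(p'_e,e)$. Your flagged concern about repeated roots is handled exactly as you indicate (a root of multiplicity $m$ of $g$ is a root of multiplicity $m-1$ of $g'$, and counting shows all $d-1$ roots of $g'$ are accounted for); for the cone containment alone, the weaker Gauss--Lucas statement that the roots of $g'$ lie in $[\mu_1,\mu_d]$ already suffices.
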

Assume that $p(x):= l_1(x) \cdots l_\ell(x)$, where $l_1(x),\ldots,l_\ell(x)$ are linear forms. Then, $p$ is hyperbolic in the direction of any vector $e$ such that $p(e) \neq 0$. As an example, consider $E_m(x_1,\ldots,x_m) = x_1 \cdots x_m$. 
Recursively applying Theorem \ref{thm:hyper-1} to such polynomials $p$ leads to many hyperbolic polynomials, including all elementary symmetric polynomials. For some properties
of hyperbolicity cones of elementary symmetric polynomials, see \cite{Zinchenko2008}. For some preliminary computational experiments on such hyperbolic programming problems, see \cite{Myklebust2015}. 

For the polynomials constructed by the products of linear forms, it is more efficient to use the straight-line program. 
For a polynomial $p(x):= l_1(x) \cdots l_\ell(x)$, let us define a $\ell$-by-$m$ matrix $L$ where the $j$th row contains the coefficients of $l_j$. In DDS,  we have created a function 

 \tx{[poly,poly\_grad] = lin2stl(M,d)}

\noindent which returns two polynomials in "straight-line" form, one as the product of linear forms defined by $M$, and the other one its directional derivative in the direction of $d$. For example, if we want the polynomial $p(x):= (2x_1-x_3)(x_1-3x_2+4x_3)$, then our $M$ is defined as 
\begin{eqnarray} \label{eq:hyper-14}
M:= \left[\begin{array}{ccc}
2&0&-1 \\ 1&-3&4 
\end{array} \right].
\end{eqnarray}
Table \ref{table:HB2} shows some results of problem where the hyperbolic polynomial is either a product of linear forms or their derivatives. 
\begin{table} [!h] 
  \caption{Results for problems involving hyperbolic polynomials as product of linear forms and their derivatives.}
  \label{table:HB2}
  \renewcommand*{\arraystretch}{1.3}
  \begin{tabular}{ |c | c| c | c | c |  c| c|}
    \hline
Problem  &    size of $A$   &    var/deg of $p(z)$ & Type of Constraints   &    Itr \\ \hline 
HPLin-LP-1  & $55*50$  & 5/3  &  HB-LP & 8    \\ \hline
HPLinDer-LP-1  & $55*50$  & 5/3  &  HB-LP & 7    \\ \hline
HPLin-LP-2  & $55*50$  & 5/7  &  HB-LP & 19    \\ \hline
HPLinDer-LP-2  & $55*50$  & 5/7  &  HB-LP & 22    \\ \hline
HPLin-Entr-1  & $111*51$  & 10/15  &  HB-LP-Entropy & 19    \\ \hline
HPLinDer-Entr-1  & $111*51$  & 10/15  &  HB-LP-Entropy & 18    \\ \hline
HPLin-pn-1  & $111*51$  & 10/15  &  HB-LP-pnorm & 30    \\ \hline
HPLinDer-pn-1  & $111*51$  & 10/15  &  HB-LP-pnorm & 32    \\ \hline
Elem-unb-10  & $10*5$  & 10/3  &  HB & 12 (unb)    \\ \hline
Elem-LP-10  & $15*5$  & 10/3  &  HB-LP & 9    \\ \hline
Elem-LP-10-2  & $1010*1000$  & 10/3  &  HB-LP & 30     \\ \hline
Elem-Entr-10-1  & $211*101$  & 10/3  &  HB-LP-Entropy & 17    \\ \hline
Elem-Entr-10-2  & $221*101$  & 10/4  &  HB-LP-Entropy & 18    \\ \hline
\end{tabular}
\end{table}

\subsubsection{Derivatives of determinant of matrix pencils} Let $H_1,\ldots,H_m \in \mathbb H^n$ be Hermitian matrices and $e \in \R^m$ be such that $e_1H_1+\cdots+e_mH_m$ is positive definite. Then, $p(x):= \det(x_1H_1+\cdots+x_mH_m)$ is hyperbolic in direction $e$. Now, by using Theorem \ref{thm:hyper-1}, we can generate a set of interesting hyperbolic polynomials by taking the directional derivative of this polynomial up to $n-3$ times at direction $e$.

\ignore{
\subsection{LP-SOCP-SDP constraints combined with geometric and entropy ones}
In this subsection, we solve instances that have LP-SOCP-SDP constraints as well as constraints defined by epigraph of univariate functions. Consider problems of the form 
\begin{eqnarray} \label{eq:num-1}
&\min& c^\top x \nonumber \\
& \text{s.t.}&  A_0+x_1 A_1 + \cdots+x_n A_n \succeq 0 , \nonumber  \\
&&  \sum_{i=1}^{k} \text{exp}((a^1_i)^\top x + b^1_i) +  \sum_{i=1}^{k} \text{entr}((\bar a^1_i)^\top x + \bar b^1_i)  + (g^1)^\top x + \gamma ^1 \leq 0, \nonumber  \\
&&  \sum_{i=1}^{k} \text{exp}((a^2_i)^\top x + b^2_i) +  \sum_{i=1}^{k} \text{entr}((\bar a^2_i)^\top x + \bar b^2_i)  + (g^2)^\top x + \gamma ^2 \leq 0,
\end{eqnarray}
where $c \in \R^n$,  $A_i$'s are $m$-by-$m$ symmetric matrices, and $\text{entr(t)}=t\ln(t)$. This problem has one block of SDP constraints plus two constraints involving exponential and entropy functions. We compare running time of DDS with CVX. CVX uses successive approximation method and calls an SDP solver several times. The numbers are the average for 10 examples with random data. 

Note that CVX does a reformulation to feed the problem to an SDP solver. We input the problem into CVX as the obvious way of writing it using $exp$ and $entr$ functions.  

\begin{table} [h] 
  \caption{}
  \label{table:num-1}
  \renewcommand*{\arraystretch}{1.3}
  \begin{tabular}{ |c | c| c | c | c | }
    \hline
dim of $c$, ($n$) &    Size of SDP $m$  & number of exp and entr  &   running time for DDS  & running time for CVX  \\ \hline
200 & 40 &  100 & 13 sec  &  48 sec \\ \hline
200 & 40 &  100 & 16 sec  &  96 sec \\ \hline
200 & 40 & 500 & 30 sec & 225 sec \\ \hline
500 & 40 & 100 & 81 sec & 366 sec \\ \hline
    
\end{tabular}
\end{table}
    
For the second group of examples, we take an SDP problem from Dimacs library and see how adding entropy constraints will change the number of iterations. The coefficients of the entropy constraints are generated randomly and the result is the average over 10 examples. 

\begin{table} [h] 
  \caption{}
  \label{table:num-2}
  \renewcommand*{\arraystretch}{1.3}
  \begin{tabular}{ |c | c| c | c | c | }
    \hline
Problem &    no entropy constraint  & one entropy constraint &   two entropy constraints  \\ \hline
200 & 40 &  100 & 13 sec  \\ \hline

\end{tabular}
\end{table}
}

%

%
%
%
%

\renewcommand{\baselinestretch}{1}
\bibliographystyle{siam}
\bibliography{References}

\appendix

\section{Calculating the predictor and corrector steps} \label{appen:LS}
As discussed in \cite{karimi_arxiv}, for both the predictor and corrector steps, the theoretical algorithm solves a linear system with the LHS matrix of the form
\begin{eqnarray} \label{eq:system-1}
&& U^\top   \left[ \begin{array}{cc} \left [\begin{array} {cc} H & h \\ h^\top & \zeta \end{array} \right] &  0  \\  0  &  \left[ \begin{array}{cc} G+\eta_* h_* h_*^\top  &  -\eta_* h_*  \\  -\eta_* h_*^\top  & \eta_*  \end{array} \right] \end{array} \right] U 
\end{eqnarray}
where $U$ is a matrix that contains the linear transformations we need:
\begin{eqnarray}  \label{eq:mat-U}
U=\left [\begin{array}{ccc}  
A & 0 & 0 \\ 0 & 1 & 0 \\ 0 & -c_A & -F^\top \\ c^\top & 0 & 0 \end{array} \right].
\end{eqnarray}
$H$ and $G$ are positive definite matrices based on the Hessians of the s.c.\ functions, $F$ is a matrix whose rows form a basis for $\Null(A)$, $c_A$ is any vector that satisfies $A^\top c_A=c$, and $\eta_*$, $\zeta$, $h$, and $h_*$ are scalars and vectors defined in \cite{karimi_arxiv}. 
A practical issue of this system is that calculating $F$ is not computationally efficient, and DDS uses a way around it. If we expand the system given in \eqref{eq:system-1}, the linear systems DDS needs to solve become of the form:
\begin{eqnarray*} \label{eq:system-2}
 \left[ \begin{array}{ccc}  A^\top HA +\eta_* cc^\top & A^\top h + \eta_* h_*^\top c_A c  & \eta_* ch_*^\top F^\top  \\
                                         h^\top A+ \eta_* c_A^\top h_* c^\top  &  \zeta+c_A^\top Gc_A+\eta_* (c_A^\top h_*)^2  & c_A^\top GF^\top + \eta_* c_A^\top h_* h_*^\top F^\top \\
                                         \eta_* Fh_* c^\top   &  FGc_A+\eta_* Fh_* h_* ^\top c_A  &  FGF^\top + \eta_* F h_* h_*^\top F^\top \end{array} \right]\left[ \begin{array}{c} \bar d_x \\ d_\tau \\ d_v \end{array}\right]=\left[ \begin{array}{c} r_1 \\ r_2 \\ F r_3 \end{array}\right].
\end{eqnarray*}
At the end, we are interested in $F^\top d_v$ to calculate our search directions. If we consider the last equation, we can remove the matrix $F$ multiplied from the left to all the terms as
\begin{eqnarray} \label{eq:system-3}
&&\eta_* h_* c^\top \bar d_x   +  d_\tau (G+\eta_* h_* h_* ^\top) c_A  +  (G + \eta_* h_* h_*^\top) F^\top d_v  = r_3 + Aw   \nonumber \\
&\Rightarrow& \eta_* \bar G^{-1}  h_* c^\top \bar d_x + d_\tau c_A + F^\top d_v = \bar G^{-1}r_3 + \bar G^{-1}Aw,
\end{eqnarray}
where $\bar G:= G + \eta_* h_* h_*^\top$. Now, we multiply the last equation by $A^\top$ from the left and eliminate $d_v$ to get
\begin{eqnarray} \label{eq:system-4}
 \eta_* A^\top \bar G^{-1}  h_* c^\top \bar d_x + d_\tau  c  = A^\top \bar G^{-1}r_3 + A^\top \bar G^{-1}Aw.
\end{eqnarray}
By using the equations in \eqref{eq:system-3} and \eqref{eq:system-4}, we can get a linear system without $F$ as
\begin{eqnarray} \label{eq:system-5}
\left[ \begin{array}  {ccc} A^\top HA +(\eta_*-\eta_*^2h_*^\top \bar G^{-1}  h_*) cc^\top & \eta_* ch_*^\top \bar G^{-1}A &  A^\top h   \\
\eta_* A^\top \bar G^{-1}  h_* c^\top    &  - A^\top \bar G^{-1}A & c\\
                                  h^\top A   &    c^\top & \zeta   \end{array} \right]\left[ \begin{array}{c} \bar d_x \\ w \\  d_\tau \end{array}\right]=\left[ \begin{array}{c} r_1-  \eta_* ch_*^\top  \bar G^{-1}r_3 \\ A^\top \bar G^{-1}r_3 \\ r_2-c_A^\top r_3 \end{array}\right]. \nonumber \\
\end{eqnarray}
By Sherman–Morrison formula, we can write
\begin{eqnarray} \label{eq:system-6}
&&\bar G^{-1} = G^{-1}- \eta_* \frac{G^{-1} h_* h_*^\top G^{-1}}{1+\eta_* h_*^\top G^{-1} h_*},  \nonumber \\
&\Rightarrow&  \bar G^{-1} h_* = \frac{1}{1+\eta_* h_*^\top G^{-1} h_*} G^{-1} h_*.
\end{eqnarray}
Let us define
\begin{eqnarray} \label{eq:system-8}
\beta:= \frac{1}{1+\eta_* h_*^\top G^{-1} h_*}. 
\end{eqnarray}
Then we can see that the LHS matrix in \eqref{eq:system-5} can be written as 
\begin{eqnarray} \label{eq:system-9}
\underbrace{\left[ \begin{array}  {ccc} A^\top HA & 0 &  A^\top h   \\
  0  &  - A^\top G^{-1}A & c\\
                                  h^\top A   &    c^\top & \zeta   \end{array} \right]}_{\tilde H} + \eta_* \beta \left[ \begin{array}{c} c \\ A^\top G^{-1} h_* \\  0 \end{array}\right]\left[ \begin{array}{c} c \\ A^\top G^{-1} h_* \\  0 \end{array}\right]^\top. 
\end{eqnarray}
This matrix is a $(2n+1)$-by-$(2n+1)$ matrix $\tilde H$ plus a rank one update. If we have the Cholesky or LU factorization of $A^\top HA $ and $A^\top G^{-1}A$ (in the case that $G := \mu^2 H$, we need just one such factorization), then we have such a factorization for the $2n$-by-$2n$ leading minor of $\tilde H$ and we can easily extend it to a factorization for the whole $\tilde H$. To solve the whole system, we can then use Sherman-Morrison formula. 

\section{Implementation details for SDP and generalized epigraphs of matrix norms} \label{sec:imp}
DDS accepts many constraints that involve matrices. These matrices are given into DDS as vectors. Calculating the derivatives and implementing the required linear algebra operations efficiently  is critical in the performance of DDS. In DDS 2.0, many ideas and tricks have been used for matrix functions. In this section, we mention some of these techniques for SDP and  matrix norm constraints. 
 
\subsection{SDP}
The primal and dual barrier functions being used for SDP constraints \eqref{SDP-1} are:
\begin{eqnarray}  \label{SDP-5}
&& \Phi(Z)=-\ln(\det(F_0+Z)), \nonumber  \\
&& \Phi_*(Y)=-n-\langle F_0, Y \rangle - \ln(\det(-Y)).
\end{eqnarray}
For function $f=-\ln(\det(X))$, we have:
\begin{eqnarray} \label{SDP-6}
\langle f'(X), H \rangle &=& -\trace(X^{-1}H), \nonumber \\
\langle f''(X)H,H \rangle &=& \trace(X^{-1}HX^{-1}H).
\end{eqnarray}
To implement our algorithm, for each matrix $X$, we need to find the corresponding gradient $g_X$ and Hessian $H_X$, such that for any symmetric positive semidefinite matrix $X$ and symmetric matrix $H$ we have:
\begin{eqnarray} \label{SDP-7}
-\trace(X^{-1}H) &=& - g_X ^\top \tx{sm2vec}(H), \nonumber \\
\trace(X^{-1}HX^{-1}H) &=& \tx{sm2vec}(H)^ \top H_X \tx{sm2vec}(H).
\end{eqnarray}
It can be shown that $g_X=\tx{sm2vec}(X^{-1})$ and $H_X = X^{-1} \otimes X^{-1}$, where $\otimes$ stands for the Kronecker product of two matrices. Although this representation is theoretically nice, there are two important practical issues with it. First, it is not efficient to calculate the inverse of a matrix explicitly. Second, forming and storing  $H_X$ is not numerically efficient  for large scale problems. DDS does not explicitly form the inverse of matrices. An important matrix in calculating the search direction is  $A^\top \Phi''(u)  A$, where $\Phi$ is the s.c.\ barrier for the whole input problem.  In DDS, there exist an internal function \tx{hessian\_A}
that directly returns $A^\top \Phi''(u)  A$ in an efficient way, optimized for all the set constraints, including SDP.  
For transitions from matrices to vectors, we use the properties of Kronecker product  that for matrices $A$, $B$, and $X$ of proper size, we have
\begin{eqnarray} \label{SDP-9}
&& (B^\top \otimes A) sm2vec(X) = sm2vec(AXB), \nonumber \\
&& (A \otimes B) ^{-1} = A^{-1} \otimes B^{-1}.
\end{eqnarray}
Similar to \tx{hessian\_A}, there are other internal functions in DDS for efficiently calculating matrix-vector products, such as \tx{hessian\_V} for evaluating the product of Hessian with a given vector of proper size.

\subsection{Generalized epigraphs of matrix norms} \label{sec:imp:mn}

Let us see how to calculate the first and second derivatives of functions in \eqref{EO2N-3} and \eqref{EO2N-4}. 
\begin{proposition}
(a) Consider $\Phi(X,U)$ defined in \eqref{EO2N-3}. Let, for simplicity, $\bar X:=X-UU^\top$, then, we have
\begin{eqnarray} \label{EO2N-9}
\Phi'(X,U)[(d_X,d_U)] &=& \trace(-\bar X^{-1}d_X+\bar X^{-1} (d_UU^\top + U d_U ^\top)),  \nonumber \\
\Phi''(X,U)[(d_X,d_U),(\bar d_X, \bar d_U)] &=&   \trace(\bar X^{-1}d_X\bar X^{-1} \bar d_X)  \nonumber \\
                                   &&  -  \trace(\bar X^{-1} \bar d_X\bar X^{-1} (d_UU^\top + U d_U ^\top))  \nonumber \\ 
                                   && -\trace(\bar X^{-1}d_X\bar X^{-1} (\bar d_UU^\top + U \bar d_U ^\top)) \nonumber \\
                                   && + \trace(\bar X^{-1}(d_UU^\top + U d_U ^\top)\bar X^{-1} (\bar d_UU^\top + U \bar d_U ^\top)) \nonumber \\ 
                                   && + 2 \trace(\bar X^{-1}d_U \bar d_U^\top).
\end{eqnarray}
(b) Consider $\Phi_*(Y,V)$ defined in \eqref{EO2N-4}, we have
\begin{eqnarray*} \label{EO2N-10}
\Phi'_*(Y,V)[(d_Y,d_V)] &=& -\frac{1}{2} \trace(V^\top Y^{-1} d_V)+\frac{1}{4}\trace(Y^{-1} VV^\top Y^{-1} d_Y)-\trace(Y^{-1} d_Y),  \nonumber \\
\Phi''_*(Y,V)[(d_Y,d_V),( \bar d_Y, \bar d_V)] &=&  -\frac{1}{2} \trace(d_V^\top Y^{-1} \bar d_V)  \nonumber \\
                                   &&  + \frac{1}{2} \trace(Y^{-1} d_VV^\top Y^{-1} \bar d_Y)  + \frac{1}{2} \trace(Y^{-1} \bar d_VV^\top Y^{-1} d_Y) \nonumber \\
                                   && -\frac{1}{2} \trace(Y^{-1} d_Y Y^{-1} \bar d_Y Y^{-1} VV^\top)+  \trace(Y^{-1} d_Y Y^{-1} \bar d_Y).
\end{eqnarray*}
\end{proposition}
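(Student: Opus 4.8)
The plan is to treat both parts as exercises in matrix calculus, relying on the two standard rules $D[\ln\det M](\delta M) = \trace(M^{-1}\delta M)$ and $D[M^{-1}](\delta M) = -M^{-1}(\delta M)M^{-1}$, together with cyclicity of the trace and the symmetry of $\bar X^{-1}$ and $Y^{-1}$. No limiting or convergence argument is needed: everything reduces to the chain rule and the product rule applied to smooth matrix expressions on the relevant open domains.

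For part (a), I would first record the directional derivative of the inner argument. Writing $\bar X = X - UU^\top$, the product rule gives $D\bar X[(d_X,d_U)] = d_X - (d_U U^\top + U d_U^\top)$. Composing with $D[\ln\det]$ and the chain rule immediately yields the claimed first-order formula $\Phi'(X,U)[(d_X,d_U)] = \trace(-\bar X^{-1} d_X + \bar X^{-1}(d_U U^\top + U d_U^\top))$. For the Hessian I would differentiate this expression a second time in a direction $(\bar d_X, \bar d_U)$, being careful to account for \emph{every} dependence on the base point: the factor $\bar X^{-1}$ depends on both $X$ and $U$, with derivative $-\bar X^{-1}(\bar d_X - \bar d_U U^\top - U \bar d_U^\top)\bar X^{-1}$, and the perturbation block $d_U U^\top + U d_U^\top$ itself depends on $U$, differentiating to $d_U \bar d_U^\top + \bar d_U d_U^\top$. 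Collecting the resulting terms and rewriting each with the cyclic property and the symmetry of $\bar X^{-1}$ produces the five terms in the stated second derivative; in particular, the two contributions $\trace(\bar X^{-1} d_U \bar d_U^\top)$ and $\trace(\bar X^{-1}\bar d_U d_U^\top)$ arising from this last dependence are equal and combine into the final $2\trace(\bar X^{-1} d_U \bar d_U^\top)$.

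For part (b) I would differentiate the three summands of $\Phi_*(Y,V) = -m - \tfrac14 \trace(V^\top Y^{-1} V) - \ln\det(-Y)$ separately. Using $D[\ln\det(-Y)](d_Y) = \trace(Y^{-1} d_Y)$ (so the last summand contributes $-\trace(Y^{-1} d_Y)$) and applying the inverse rule to the quadratic term gives the three pieces of $\Phi_*'$; the factor $\tfrac12$ rather than $\tfrac14$ in the $d_V$ piece comes from the two symmetric occurrences of $V$ together with trace transpose-invariance. The second derivative follows the same pattern: each occurrence of $Y^{-1}$ contributes a term through the inverse rule, each occurrence of $V$ contributes a term through the product rule, and the purely $Y$-dependent piece $-\trace(Y^{-1} d_Y)$ gives the final $\trace(Y^{-1} d_Y Y^{-1}\bar d_Y)$. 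Here one must again use symmetry and cyclicity to merge the pairs of $\tfrac14$-terms into the displayed $\tfrac12$-terms.

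I do not expect a genuine mathematical obstacle; the whole proposition is a direct computation. The only real difficulty is bookkeeping — ensuring that no product-rule term is dropped (especially the extra $U$-dependence of the perturbation block in part (a)), and then massaging each term into the precise symmetric form shown by repeated use of $\trace(ABCD)=\trace(BCDA)$ and $\trace(M)=\trace(M^\top)$.
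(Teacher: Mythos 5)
Your proposal is correct and follows essentially the same route as the paper: both reduce the claim to the derivative rule for $\ln\det$ together with the derivative of the matrix inverse, then collect terms via cyclicity and symmetry of the trace. The only cosmetic difference is that the paper restricts to a line $\alpha\mapsto(X+\alpha d_X,\,U+\alpha d_U)$ and computes the derivative of the inverse by an explicit limit, whereas you invoke the Fr\'echet-derivative rule $D[M^{-1}](\delta M)=-M^{-1}(\delta M)M^{-1}$ directly; your bookkeeping (in particular the extra $U$-dependence of the perturbation block producing the $2\trace(\bar X^{-1}d_U\bar d_U^\top)$ term) is exactly what the computation requires.
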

\begin{proof}
For the proof we use the fact that if $g=-\ln(\det(X))$, then $g'(X)[H]=\trace(X^{-1}H)$. Also note that if we define
\begin{eqnarray} \label{EO2N-11}
g(\alpha):=-\ln(\det(X+\alpha d_X-(U+\alpha d_U)(U+\alpha d_U)^\top)),
\end{eqnarray}
then
\[
g'(0)=\Phi'(X,U)[(d_X,d_U)], \ \ \ g''(0)=\Phi''(X,U)[(d_X,d_U),(d_X,d_U)],
\]
and similarly for $\Phi_*(Y,V)$. We do not provide all the details, but we show how the proof works. For example, let us define
\begin{eqnarray} \label{EO2N-12}
f(\alpha):=\trace((Y+\alpha d_Y)^{-1} VV^\top Y^{-1} d_Y),
\end{eqnarray}
and we want to calculate $f'(0)$. We have
\begin{eqnarray} \label{EO2N-13}
f'(0)&:=&\lim_{\alpha \rightarrow 0} \frac{f(\alpha)-f(0)}{\alpha}  \nonumber \\
      &=& \trace \left ( \lim_{\alpha \rightarrow 0}  \frac{(Y+\alpha d_Y)^{-1} VV^\top Y^{-1} d_Y-Y^{-1} VV^\top Y^{-1} d_Y}{\alpha}\right) \nonumber \\
      &=& \trace \left ( \lim_{\alpha \rightarrow 0}  \frac{(Y+\alpha d_Y)^{-1} \left [ VV^\top Y^{-1} d_Y-(I+\alpha d_Y Y^{-1}) VV^\top Y^{-1} d_Y  \right]}{\alpha}\right)\nonumber \\
      &=& \trace \left ( \lim_{\alpha \rightarrow 0}  (Y+\alpha d_Y)^{-1} \left [ d_Y Y^{-1}VV^\top Y^{-1} d_Y  \right]\right)\nonumber \\
      &=& \trace \left ( Y^{-1} d_Y Y^{-1}VV^\top Y^{-1} d_Y  \right). 
\end{eqnarray}
\end{proof}
Note that all the above formulas for the derivatives are in matrix form. Let us explain briefly how to convert them to the vector form for the code. We explain it for the derivatives of $\Phi(X,U)$ and the rest are similar. From \eqref{EO2N-9} we have
\begin{eqnarray} \label{EO2N-14}
\Phi'(X,U)[(d_X,d_U)] &=& \trace(-\bar X^{-1}d_X)+\trace(\bar X^{-1} d_UU^\top) + \trace(X^{-1}U d_U ^\top)),  \nonumber \\
&=& \trace(-\bar X^{-1}d_X)+2\trace(U^\top \bar X^{-1} d_U).
\end{eqnarray}
Hence, if $g$ is the gradient of $\Phi(X,U)$ in the vector form, we have
\begin{eqnarray} \label{EO2N-15}
g= \left [ \begin{array} {c}  2 \times m2vec(X^{-1}U ,n)  \\ -sm2vec(X^{-1}) \end{array} \right].
\end{eqnarray}
The second derivatives are trickier. Assume that for example we want the vector form $h$ for $\Phi''(X,U)[(d_X,d_U)]$. By using \eqref{EO2N-9} we can easily get each entry of $h$; consider the identity matrix of size $m^2+mn$. If we choose $(\bar d_X, \bar d_U)$ to represent the $j$th column of this identity matrix, we get $h(j)$. Practically, this can be done by a $for$ loop, which is not efficient. What we did in the code is to implement this using matrix multiplication. 

\section{Quantum entropy and quantum relative entropy} \label{appen:quantum}
The s.c.\ barrier DDS uses for quantum entropy is given in \eqref{eq:fun_cal_3}. To derive the LF conjugate, we solve the optimization problem in  \eqref{eq:fun_cal_4}; let us write the first order optimality conditions for \eqref{eq:fun_cal_4}. Section 3.3 of the book \cite{hiai2014introduction} is about the derivation of matrix-values functions. For the first derivative, we have the following theorem:
\begin{theorem} \label{thm:mtx_fun_der_1}
Let $X$ and $H$ be self-adjoint matrices and $f: (a,b)\mapsto \mathbb R$ be a continuously differentiable function defined on an interval. Assume that the eigenvalues of $X+\alpha H$ are in $(a,b)$ for an interval around $\alpha_0 \in \mathbb R$. Then, 
\begin{eqnarray}\label{eq:fun_cal_5}
\left. \frac{d}{dt}\trace{f(X+\alpha H)}\right|_{\alpha=\alpha_0} = \trace{Hf'(X+\alpha_0H)}. 
\end{eqnarray}
\end{theorem}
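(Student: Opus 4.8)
The plan is to reduce the statement to the case of polynomials, where everything is algebraic, and then recover the general $C^1$ case by approximation. First I would verify the identity for monomials $f(x)=x^k$. Writing $X_\alpha := X+\alpha H$, the product rule gives
\[
\frac{d}{d\alpha}\trace(X_\alpha^k) = \sum_{j=0}^{k-1}\trace\!\left(X_\alpha^{\,j}\, H\, X_\alpha^{\,k-1-j}\right),
\]
and by cyclicity of the trace each of the $k$ summands equals $\trace(H X_\alpha^{k-1})$. Hence the derivative is $k\,\trace(H X_\alpha^{k-1}) = \trace(H f'(X_\alpha))$, which is exactly the claimed formula for $f(x)=x^k$. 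By linearity this extends at once to every polynomial $p$, giving $\frac{d}{d\alpha}\trace p(X_\alpha) = \trace(H p'(X_\alpha))$ for all real $\alpha$.

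Next I would set up the approximation. Since the eigenvalues of $X_\alpha$ depend continuously on $\alpha$ and lie in $(a,b)$ on a neighbourhood of $\alpha_0$, I can choose a closed interval $[c,d]\subset(a,b)$ and a small open interval $I\ni\alpha_0$ such that $\mathrm{spec}(X_\alpha)\subset[c,d]$ for all $\alpha\in I$. On $[c,d]$ I approximate $f$ in the $C^1$ sense: using the Weierstrass theorem I pick polynomials $q_n\to f'$ uniformly on $[c,d]$, and then set $p_n(x):=f(c)+\int_c^x q_n(t)\,dt$, so that $p_n\to f$ uniformly and $p_n'=q_n\to f'$ uniformly on $[c,d]$. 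Because all the relevant spectra stay inside the fixed compact set $[c,d]$, the corresponding spectral (matrix) functions converge as well, so $\trace p_n(X_\alpha)\to\trace f(X_\alpha)$ and $\trace(H p_n'(X_\alpha))\to\trace(H f'(X_\alpha))$, both uniformly for $\alpha\in I$.

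The final step is to interchange the limit with the differentiation. For each $n$ the polynomial identity already established reads $g_n'(\alpha)=\trace(H p_n'(X_\alpha))$, where $g_n(\alpha):=\trace p_n(X_\alpha)$. Since $g_n\to g$ pointwise, with $g(\alpha):=\trace f(X_\alpha)$, and $g_n'$ converges uniformly on $I$ to the map $\alpha\mapsto\trace(H f'(X_\alpha))$, the standard theorem on uniform convergence of derivatives shows that $g$ is differentiable on $I$ with $g'(\alpha)=\trace(H f'(X_\alpha))$; evaluating at $\alpha_0$ yields the theorem.

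I expect the main obstacle to be this analytic interchange, and in particular the arrangement of uniform-in-$\alpha$ convergence: the algebraic heart of the argument (the collapse of the $k$ cross-terms under the trace) is immediate, but one must keep the spectra inside a fixed compact subinterval and approximate $f$ \emph{together with} its derivative in order to justify passing the limit through $d/d\alpha$. A purely spectral alternative---expanding the Fr\'echet derivative of the matrix extension $F$ via the Daleckii--Krein divided-difference formula and observing that the trace selects only the diagonal entries $f^{[1]}(\lambda_i,\lambda_i)=f'(\lambda_i)$---would also work, but it requires first establishing that formula, so I would prefer the approximation route above as more self-contained.
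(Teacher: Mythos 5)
Your argument is correct, but note that the paper does not actually prove this statement: Theorem \ref{thm:mtx_fun_der_1} is quoted from Section 3.3 of the book of Hiai and Petz \cite{hiai2014introduction} and used as a black box, so there is no in-paper proof to compare against. Your polynomial-approximation route is a sound, self-contained substitute. The algebraic core is right: for $f(x)=x^k$ the product rule and cyclicity of the trace collapse the $k$ cross-terms $\trace(X_\alpha^{\,j}HX_\alpha^{\,k-1-j})$ to $k\,\trace(HX_\alpha^{\,k-1})$, and linearity handles all polynomials. The analytic half is also arranged correctly: by continuity of eigenvalues you may confine $\mathrm{spec}(X_\alpha)$ to a fixed compact $[c,d]\subset(a,b)$ for $\alpha$ in a neighbourhood $I$ of $\alpha_0$; approximating $f'$ (rather than $f$) by polynomials $q_n$ and integrating gives $C^1$-convergence $p_n\to f$ on $[c,d]$, which yields uniform-in-$\alpha$ convergence of both $\trace\,p_n(X_\alpha)$ and $\trace(H\,p_n'(X_\alpha))$ (since, e.g., $\lvert\trace(H\,M)\rvert\le\norm{H}\,\trace\lvert M\rvert$ and $\norm{p_n'(X_\alpha)-f'(X_\alpha)}\le\sup_{[c,d]}\lvert p_n'-f'\rvert$); the classical theorem on uniform convergence of derivatives then lets you pass the limit through $d/d\alpha$. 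The alternative you mention in closing, differentiating the matrix extension via the Daleckii--Krein divided-difference formula (the paper's own Theorem \ref{thm:mtx_fun_der_2}, also quoted from \cite{hiai2014introduction}) and observing that the trace picks out only the diagonal entries $f'(\lambda_i)$, is essentially the textbook route; your approximation argument trades that machinery for elementary analysis and is the more self-contained of the two.
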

The first-order optimality conditions for \eqref{eq:fun_cal_4} can be written as
\begin{eqnarray}\label{eq:fun_cal_6}
\eta+\frac{1}{t-\phi(X)} &=&0 \nonumber \\
Y+\frac{-f'(X)}{t-\phi(X)} +X^{-1}&=&0. 
\end{eqnarray}
If we substitute the first equation in the second one, we get
\begin{eqnarray}\label{eq:fun_cal_7}
\frac{1}{\eta}Y+f'(X) +\frac{1}{\eta}X^{-1}=0. 
\end{eqnarray}
This equation implies that $Y$ and $X$ are simultaneously diagonalizable and if we have $Y=U\Diag(\lambda_1(Y),\ldots,\lambda_n(Y))$, then we have $X=U\Diag(\lambda_1(X),\ldots,\lambda_n(X))$ and so
\begin{eqnarray}\label{eq:fun_cal_8}
\frac{1}{\eta}\lambda_i(Y)+f'(\lambda_i(X)) +\frac{1}{\eta \lambda_i(X)}=0, \ \ \ i\in\{1,\ldots,n\}. 
\end{eqnarray}
Here, we focus on the case that $f(x)=x\ln(x)$. This matrix function is related to quantum relative entropy and Von-Neumann entropy optimization problems (see \cite{chandrasekaran2017relative} for a review of the applications). In this case, we can use results for type 3 univariate function in Table \ref{table1} and use the $\theta$ function we defined in \eqref{eq:entropy_theta_1}. The LF conjugate of \eqref{eq:fun_cal_4} is given in the following lemma:
\begin{lemma}
Assume that $f(x)=x\ln(x)$. For a given $\eta < 0$ and a symmetric matrix $Y \in \mathbb S^n$, the function defined in \eqref{eq:fun_cal_4} becomes
\begin{eqnarray}\label{eq:fun_cal_9}
\Phi_*(\eta,Y)= -\ln(-\eta) + \trace(\Theta+\Theta^{-1}) - \trace\left(\frac{1}{\eta}Y\right) -1-2n,
\end{eqnarray}
where $\Theta:= \Theta(\frac{1}{\eta}Y+(1-\ln(-\eta))I)$. $\Theta$ is the matrix extension of $\theta$ defined in \eqref{eq:entropy_theta_1} as described in Section \ref{sec:QE}. 
\end{lemma}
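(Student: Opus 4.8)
The plan is to exploit the first-order optimality conditions for the concave maximization in \eqref{eq:fun_cal_4}, which are already recorded in \eqref{eq:fun_cal_6}--\eqref{eq:fun_cal_8}, and to reduce the matrix problem to $n$ decoupled scalar equations solved by the $\theta$ function. Since $\Phi$ is a (convex) barrier, the objective $t\eta + \langle X,Y\rangle + \ln(t-\phi(X)) + \ln\det(X)$ is concave in $(t,X)$ on its domain, so the supremum is attained at the unique stationary point; evaluating the objective there yields $\Phi_*(\eta,Y)$, and the whole task is to put that value in closed form.

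First I would set $T := t-\phi(X)$ and read off from the first equation of \eqref{eq:fun_cal_6} that $T=-1/\eta$ at optimality (so $T>0$ since $\eta<0$, and $\ln T = -\ln(-\eta)$). Specializing \eqref{eq:fun_cal_7} to $f(x)=x\ln(x)$, for which $f'(X)=I+\ln(X)$, gives the stationarity relation $\frac{1}{\eta}Y + I + \ln(X) + \frac{1}{\eta}X^{-1}=0$; as already observed, this forces $X$ and $Y$ to share an eigenbasis, so the problem diagonalizes into the scalar equations \eqref{eq:fun_cal_8}, namely $\frac{\lambda_i(Y)}{\eta} + 1 + \ln(\lambda_i(X)) + \frac{1}{\eta\lambda_i(X)}=0$.

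The key step is the change of variables $\Theta := -\eta X$ (equivalently $X=-\Theta/\eta$). I would verify by direct substitution, using $\ln\lambda_i(X) = \ln\theta_i - \ln(-\eta)$ and $\frac{1}{\eta\lambda_i(X)} = -\frac{1}{\theta_i}$, that each scalar condition \eqref{eq:fun_cal_8} becomes exactly $\frac{1}{\theta_i} - \ln(\theta_i) = \frac{\lambda_i(Y)}{\eta} + 1 - \ln(-\eta)$, i.e. $\theta_i = \theta\!\left(\frac{\lambda_i(Y)}{\eta}+1-\ln(-\eta)\right)$. This identifies $\Theta$ with the matrix extension $\Theta\!\left(\frac{1}{\eta}Y + (1-\ln(-\eta))I\right)$ claimed in the statement, and at the same time solves for the optimal $X$.

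Finally I would substitute the optimizer back into the objective and convert each term to the shared eigenbasis. Writing $t = -1/\eta + \sum_i \lambda_i(X)\ln\lambda_i(X)$, the value becomes $-1 + \eta\sum_i \lambda_i(X)\ln\lambda_i(X) + \sum_i \lambda_i(X)\lambda_i(Y) - \ln(-\eta) + \sum_i \ln\lambda_i(X)$. Rewriting everything through $\theta_i=-\eta\lambda_i(X)$, the entropy term $\eta\sum_i \lambda_i(X)\ln\lambda_i(X)$ and the inner-product term $\sum_i\lambda_i(X)\lambda_i(Y)$ combine so that all $\theta_i\ln\theta_i$ and $\ln(-\eta)\sum_i\theta_i$ contributions cancel, leaving $\trace(\Theta)-n$. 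The remaining $\sum_i \ln\lambda_i(X) = \sum_i\ln\theta_i - n\ln(-\eta)$ is handled by invoking the defining relation of $\theta$ a second time to replace $\sum_i\ln\theta_i$ by $\trace(\Theta^{-1}) - \frac{1}{\eta}\trace(Y) - n + n\ln(-\eta)$. Collecting terms, the $n\ln(-\eta)$ pieces cancel and the expression collapses to \eqref{eq:fun_cal_9}. The main obstacle is not any single estimate but the bookkeeping in this back-substitution: the linearizing variable $\Theta=-\eta X$ has to be chosen so that it simultaneously turns the transcendental optimality equation into the $\theta$-equation and makes the awkward $x\ln x$ terms cancel, and the $\theta$-relation must be used twice—once to identify $\Theta$ and once to eliminate $\sum_i\ln\theta_i$—before the stated closed form appears.
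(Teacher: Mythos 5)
Your proposal is correct and follows essentially the same route as the paper's proof: both use the stationarity conditions \eqref{eq:fun_cal_6}--\eqref{eq:fun_cal_8}, identify $\Theta=-\eta X$ as the matrix extension of $\theta$ evaluated at $\frac{1}{\eta}Y+(1-\ln(-\eta))I$, and back-substitute into the objective. The only cosmetic difference is that you carry out the cancellation eigenvalue-by-eigenvalue whereas the paper keeps the computation in trace form (first evaluating $\eta t$); the algebra is identical.
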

\begin{proof}
Assume that for a given $(\eta,Y)$, $(t,X)$ is the optimal solution for \eqref{eq:fun_cal_4}. If we use theorem \ref{thm:mtx_fun_der_1}, we have $f'(X)=I+\ln(X)$. By substituting this in the first order optimality condition \eqref{eq:fun_cal_7} we get
\begin{eqnarray}\label{eq:fun_cal_10}
\eta X= -\Theta \left (\frac{1}{\eta}Y+(1-\ln(-\eta))I\right). 
\end{eqnarray}
By the first equation in \eqref{eq:fun_cal_6} and using \eqref{eq:fun_cal_10} we can write
\begin{eqnarray}\label{eq:fun_cal_11}
\eta t = -1+\trace(\eta X \ln(X))&=& -1+\trace (-\Theta \cdot \ln(X)) \nonumber \\
&=& -1+\trace \left(\Theta \cdot \left(\frac{1}{\eta}Y+I-\Theta^{-1}\right) \right). \nonumber \\
    &=&-1-n+\trace(\Theta Y/\eta) + \trace(\Theta). 
\end{eqnarray}
If we substitute $t$ and $X$ in  \eqref{eq:fun_cal_4}, we get the result. 
\end{proof}
To implement our primal-dual techniques, we need the gradient and Hessian of $\Phi(t,X)$ and $\Phi_*(\eta,Y)$. We already saw in Theorem \ref{thm:mtx_fun_der_1} how to calculate the gradient. The following theorem gives us a tool to calculate the Hessian. 
\begin{theorem} [\cite{hiai2014introduction}-Theorem 3.25] \label{thm:mtx_fun_der_2}
Assume that $f:(a,b) \mapsto \R$ is a $\mathcal C^1$-function and $T=\Diag(t_1,\ldots,t_n)$ with $t_i \in (a,b), \ i\in\{1,\ldots,n\}$. Then, for a Hermitian matrix $H$, we have
\begin{eqnarray} \label{eq:thm:mtx_fun_der_2_1}
\left. \frac{d}{dt} f(T+\alpha H) \right|_{\alpha=0}=T_f \odot H, 
\end{eqnarray}
where $\odot$ is the Hadamard product and $T_f$ is the divided difference matrix:
\begin{eqnarray} \label{eq:thm:mtx_fun_der_2_2}
T_f:= \left\{\begin{array}{ll} \frac{f(t_i)-f(t_j)}{t_i-t_j} & t_i \neq t_j \\ f'(t_i) & t_i=t_j\end{array} \right..
\end{eqnarray}
\end{theorem}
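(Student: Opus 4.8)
The plan is to reduce the identity to the case of monomials, verify it there by an explicit computation, and then extend to all $C^1$ functions by approximation. Both sides depend linearly on $f$: the matrix extension $F$ is linear in $f$ at a fixed argument (the spectral decomposition of $T+\alpha H$ does not depend on $f$), so $\alpha\mapsto F(T+\alpha H)$ and its derivative at $\alpha=0$ are linear in $f$, and likewise the divided-difference matrix $T_f$ is linear in $f$. Hence it suffices to establish the formula for $f(x)=x^k$, $k\ge 0$, which span the polynomials. (Here $\frac{d}{dt}$ in the statement should read $\frac{d}{d\alpha}$.)

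For $f(x)=x^k$ one has $F(T+\alpha H)=(T+\alpha H)^k$, and the Leibniz rule gives $\frac{d}{d\alpha}(T+\alpha H)^k\big|_{\alpha=0}=\sum_{j=0}^{k-1} T^{\,k-1-j} H\, T^{\,j}$. Because $T=\Diag(t_1,\dots,t_n)$ is diagonal, the $(p,q)$ entry of this sum is $H_{pq}\sum_{j=0}^{k-1} t_p^{\,k-1-j} t_q^{\,j}$, which equals $H_{pq}\,\frac{t_p^k-t_q^k}{t_p-t_q}$ when $t_p\ne t_q$ and $H_{pq}\, k\, t_p^{k-1}$ when $t_p=t_q$. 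These are exactly $[T_f]_{pq}H_{pq}$ for $f(x)=x^k$, so the derivative equals $T_f\odot H$; by linearity the identity holds for every polynomial. This is the transparent part that reveals why the divided-difference matrix appears.

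To pass to a general $C^1$ function $f$, I would fix a compact interval $K\subset(a,b)$ containing the eigenvalues of $T+\alpha H$ for all small $\alpha$ and choose a polynomial $p$ with $\norm{f-p}$ small in the $C^1(K)$ norm. Writing $r:=f-p$, I split the difference quotient of $F_f$ at $T$ into three pieces: the difference quotient of $F_r$, the difference quotient of $F_p$ minus $T_p\odot H$, and $(T_p-T_f)\odot H$. The middle piece tends to $0$ as $\alpha\to 0$ by the polynomial case, and the last is controlled since every divided difference of $r$ is bounded by $\sup_K|r'|$ via $[t_p,t_q]r=\int_0^1 r'(t_q+s(t_p-t_q))\,ds$ (the same integral also covers the coincident case $t_p=t_q$, giving $r'(t_p)$).

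The main obstacle will be the first piece, which needs a Lipschitz estimate for the functional calculus that is uniform in $\alpha$. I would work in the Hilbert--Schmidt norm $\norm{\cdot}_{HS}$, where such a bound is clean: with $A:=T+\alpha H$ and $B:=T$ and their spectral projections $P_i,Q_j$, one has the divided-difference representation $F_r(A)-F_r(B)=\sum_{i,j}[A,B]^r_{ij}\,P_i(A-B)Q_j$, where $[A,B]^r_{ij}$ is the divided difference of $r$ at the eigenvalue pair (equal to $r'$ on coincident pairs, which is exactly where $C^1$ regularity is used). Since the blocks $P_i(A-B)Q_j$ are mutually orthogonal in the Hilbert--Schmidt inner product, this gives $\norm{F_r(A)-F_r(B)}_{HS}\le(\sup_K|r'|)\,\norm{A-B}_{HS}$, so the first piece is $O(\sup_K|r'|)$ uniformly in $\alpha$. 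Improving the polynomial approximation then drives all three error terms to zero and finishes the proof. The only delicate points are the repeated eigenvalues of $T$, resolved by the convention $[T_f]_{pp}=f'(t_p)$ together with the same convention in the divided-difference representation, and the fact that $C^1$ regularity (not mere continuity) is precisely what makes those diagonal entries the correct limits.
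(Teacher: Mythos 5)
The paper offers no proof of this statement: it is imported verbatim as Theorem 3.25 of \cite{hiai2014introduction}, so there is no in-paper argument to compare yours against. Your proof is correct and is essentially the standard Daleckii--Krein/Loewner argument (which is also how the cited source proceeds): linearity of both sides in $f$ reduces the identity to monomials, where $\frac{d}{d\alpha}(T+\alpha H)^k\big|_{\alpha=0}=\sum_{j=0}^{k-1}T^{\,k-1-j}HT^{\,j}$ has $(p,q)$ entry $H_{pq}\sum_{j}t_p^{\,k-1-j}t_q^{\,j}$, i.e.\ exactly the divided difference of $x^k$; and the passage to general $C^1$ functions rests on the two standard facts you correctly isolate, namely $C^1(K)$-approximation of $f$ by polynomials (Weierstrass applied to $f'$, then integrated) and the Hilbert--Schmidt Lipschitz bound $\norm{F_r(A)-F_r(B)}_{HS}\le \bigl(\sup_K|r'|\bigr)\norm{A-B}_{HS}$, which does follow from the representation $F_r(A)-F_r(B)=\sum_{i,j}[\lambda_i,\mu_j]^{r}\,P_i(A-B)Q_j$ together with the mutual orthogonality of the blocks $P_i(A-B)Q_j$ in the Hilbert--Schmidt inner product. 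The three-term splitting of the difference quotient then closes the argument, since each error term is $O(\sup_K|r'|)$ uniformly in $\alpha$. You are also right that the $\frac{d}{dt}$ in the paper's display is a typo for $\frac{d}{d\alpha}$. I see no gaps.
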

$T$ is diagonal in the statement of the theorem, which is without loss of generality. Note that by the definition of functional calculus in \eqref{eq:fun_cal_1}, for a Hermitian matrix $X$ and a unitary matrix $U$, we have
\begin{eqnarray}\label{eq:fun_cal_13}
f(UXU^*)=Uf(X)U^*. 
\end{eqnarray}
Therefore,  for a matrix $T=U\Diag(t_1,\ldots,t_n)U^*$, we can update \eqref{eq:thm:mtx_fun_der_2_1}
\begin{eqnarray} \label{eq:thm:mtx_fun_der_2_3}
\left. \frac{d}{dt} f(T+\alpha H) \right|_{\alpha=0}=U \left(T_f \odot (U^*HU)\right) U^*,
\end{eqnarray}
where we extend the definition of $T_f$ in \eqref{eq:thm:mtx_fun_der_2_2} to non-diagonal matrices. 
Now we can use Theorems  \ref{thm:mtx_fun_der_2} and \ref{thm:mtx_fun_der_1} to calculate the Hessian of a matrix function.
\begin{corollary}
Let $X$, $H$, and $\tilde H$ be self-adjoint matrices and $f: (a,b)\mapsto \mathbb R$ be a continuously differentiable function defined on an interval. Assume that the eigenvalues of $X+tH$ and $X+t \tilde H$ are in $(a,b)$ for an interval around $t=0$. Assume that $X=U\Diag(\lambda_1,\ldots,\lambda_n)U^*$. Then, 
\begin{eqnarray}\label{eq:fun_cal_14}
f''(X)[H,\tilde H]=\trace \left( \left(X_f \odot (U^*HU)\right) U^* \tilde HU \right). 
\end{eqnarray}
\end{corollary}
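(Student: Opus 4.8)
Here $f''(X)[H,\tilde H]$ is to be read as the second (mixed) directional derivative of the scalar function $\phi(X):=\trace(f(X))$ from \eqref{eq:fun_cal_1}, i.e. $f''(X)[H,\tilde H]=\left.\frac{\partial^2}{\partial t\,\partial s}\trace f(X+tH+s\tilde H)\right|_{t=s=0}$. The plan is to compute this as an iterated directional derivative: differentiate first along $\tilde H$, which produces the gradient $f'$, and then along $H$, which amounts to differentiating that gradient as a matrix-valued function. The two tools are already available: Theorem~\ref{thm:mtx_fun_der_1} for the derivative of a trace, and the Dalecki\u{\i}--Kre\u{\i}n type identity \eqref{eq:thm:mtx_fun_der_2_3} for the Fr\'echet derivative of a matrix function.

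First I would hold $t$ fixed and differentiate in $s$ at $s=0$. Applying Theorem~\ref{thm:mtx_fun_der_1} with base point $X+tH$ gives $\left.\frac{\partial}{\partial s}\trace f(X+tH+s\tilde H)\right|_{s=0}=\trace\!\big(\tilde H\,f'(X+tH)\big)$. This reduces the task to differentiating the matrix-valued map $t\mapsto f'(X+tH)$ at $t=0$ and pairing the result against the fixed matrix $\tilde H$ under the trace.

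For the second step I would apply \eqref{eq:thm:mtx_fun_der_2_3} \emph{to the function $f'$ in place of $f$}. Writing $X=U\Diag(\lambda_1,\ldots,\lambda_n)U^*$, this yields $\left.\frac{d}{dt}f'(X+tH)\right|_{t=0}=U\big(X_{f'}\odot(U^*HU)\big)U^*$, where $X_{f'}$ is the divided-difference matrix of \eqref{eq:thm:mtx_fun_der_2_2} formed from $f'$, with off-diagonal entries $\big(f'(\lambda_i)-f'(\lambda_j)\big)/(\lambda_i-\lambda_j)$ and diagonal entries $f''(\lambda_i)$. Substituting and using the cyclic invariance of the trace with $U^*U=I$, I would obtain $f''(X)[H,\tilde H]=\trace\!\big(\tilde H\,U(X_{f'}\odot(U^*HU))U^*\big)=\trace\!\big((X_{f'}\odot(U^*HU))\,U^*\tilde H U\big)$, which is the claimed identity, with the divided-difference matrix understood as that of $f'$ (i.e. $X_{f'}$ in the notation of \eqref{eq:thm:mtx_fun_der_2_2}, rather than of $f$).

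The step that needs the most care is precisely this second differentiation. To differentiate $t\mapsto f'(X+tH)$ as a matrix function and to have the diagonal entries $f''(\lambda_i)$ well defined, one needs $f'$ itself to be $\mathcal{C}^1$, i.e. $f\in\mathcal{C}^2$, which is slightly stronger than the $\mathcal{C}^1$ hypothesis stated; I would either strengthen the hypothesis or pass to the limit by continuity. It is also worth stressing that the divided-difference matrix entering the Hessian is built from $f'$, not $f$: its symmetry (the $(i,j)$ and $(j,i)$ entries coincide) is exactly what makes the final expression symmetric in $H$ and $\tilde H$, as any Hessian must be. A one-line sanity check on $f(x)=x^2$, where $X_{f'}$ is the constant matrix with all entries equal to $f''\equiv 2$ and the formula collapses to $2\trace(H\tilde H)$, confirms that $f'$ rather than $f$ is the correct choice.
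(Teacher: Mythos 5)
Your proof is correct and follows exactly the route the paper intends (the paper states this corollary without an explicit proof, merely noting that it follows by combining Theorems \ref{thm:mtx_fun_der_1} and \ref{thm:mtx_fun_der_2}): differentiate $\trace f(X+tH+s\tilde H)$ once via Theorem \ref{thm:mtx_fun_der_1} to get $\trace(\tilde H f'(X+tH))$, and once more via the Daleckii--Krein formula \eqref{eq:thm:mtx_fun_der_2_3} applied to $f'$. Your observation that the divided-difference matrix must be built from $f'$ rather than $f$ (so the subscript in \eqref{eq:fun_cal_14} should read $X_{f'}$, which in turn requires $f\in\mathcal{C}^2$ rather than $\mathcal{C}^1$) is a genuine correction of a notational slip in the statement, and it is consistent with the paper's own later use of $X_{\ln}$ in the Hessian of $\Phi(t,X)$ for $f(x)=x\ln(x)$, whose derivative is $1+\ln(x)$.
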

Let us calculate the gradient and Hessian for our functions for $\phi(x)=x\ln(x)$. Let $X=U\Diag(\lambda_1,\ldots,\lambda_n)U^*$ in the following. 
\begin{eqnarray*}\label{eq:fun_cal_15}
\Phi'(t,X)[(h,H)]= -\frac{h}{t-\trace (X\ln X)} + \frac{1}{t-\trace (X\ln X)} \trace((I+\ln(X))H)-\trace(X^{-1}H).
\end{eqnarray*}
For the second derivative, we can use the fact that
\begin{eqnarray*}\label{eq:fun_cal_16}
\Phi''(t,X)[(\tilde h, \tilde H),(h,H)]= \left. \Phi'(t+\alpha \tilde h,X+\alpha \tilde H)\right|_{\alpha=0}[(h,H)].
\end{eqnarray*}
Using this formula, we have ($\zeta:= \frac{ 1}{t-\trace (X\ln X)}$)
\begin{eqnarray*}\label{eq:fun_cal_17}
\Phi''(t,X)[(\tilde h, \tilde H),(h,H)]&=& \zeta^2 h\tilde h\nonumber \\
&& - \zeta^2 \tilde h \trace((I+\ln(X))H)- \zeta h\trace((I+\ln(X))\tilde H) \nonumber \\
&& +\zeta^2 \trace((I+\ln(X))\tilde H)\trace((I+\ln(X))H) \nonumber \\
&& +\zeta \trace\left(U \left(X_{\ln} \odot (U^*\tilde HU)\right) U^*H\right) \nonumber \\
&&+ \trace(X^{-1}\tilde HX^{-1}H).
\end{eqnarray*}
Now let us compute the gradient and Hessian for the conjugate function. Let $Y=U\lambda(Y)U^*$, by using Theorem \ref{thm:mtx_fun_der_1}, the gradient of $\Phi_*(\eta,Y)$ is
\begin{eqnarray*}\label{eq:fun_cal_12}
\Phi'_*(\eta,Y)[(h,H)]&=&h \left[-\frac{1}{\eta}+\trace\left(\left(-\frac{1}{\eta^2} Y-\frac{1}{\eta}I\right) \bar \Theta+\frac{1}{\eta^2} Y\right) \right] \nonumber \\
                   &&+ \trace\left( H \left(\frac{1}{\eta}\bar \Theta -\frac{1}{\eta}I\right) \right),
\end{eqnarray*}
where $\bar \Theta$ is the matrix extension of $\left(\theta'-\frac{\theta'}{\theta^2}\right)$. 
For the second derivative, let us first define $\bar Y$ as in \eqref{eq:thm:mtx_fun_der_2_2}:
\begin{eqnarray*}\label{eq:fun_cal_19}
\bar Y:= \left(\frac{1}{\eta}Y+(1-\ln(-\eta))I\right)_{\left(\theta'-\frac{\theta'}{\theta^2}\right)}.
\end{eqnarray*}
In the expression of the Hessian, we use $\bar{\bar \Theta}$ as the matrix extension of $\left(\theta''-\frac{\theta''\theta-2(\theta')^2}{\theta^3}\right)$.  Then, we have
\begin{eqnarray*}\label{eq:fun_cal_18}
&&\Phi''_*(\eta,Y)[(\tilde h,\tilde H),(h,H)]= \nonumber \\
&& h\tilde h \left[\frac{1}{\eta^2}+\trace\left(\left(\frac{2}{\eta^3} Y+\frac{1}{\eta^2}I\right)\bar \Theta+\left(-\frac{1}{\eta^2} Y-\frac{1}{\eta}I\right)^2 \bar{\bar \Theta} -\frac{2}{\eta^3} Y\right) \right] \nonumber \\
                   &&+ \tilde h\trace\left( H \left[\frac{-1}{\eta^2} \bar \Theta+ \frac{1}{\eta}U\left(\bar Y \odot \left(\frac{-1}{\eta^2}\lambda(Y)-\frac{1}{\eta}I\right)\right)U^* +\frac{1}{\eta^2} I\right] \right) \nonumber \\
                   &&+  h\trace\left( \tilde H \left[\frac{-1}{\eta^2} \bar \Theta+ \frac{1}{\eta}U\left(\bar Y \odot \left(\frac{-1}{\eta^2}\lambda(Y)-\frac{1}{\eta}I\right)\right)U^* +\frac{1}{\eta^2} I\right] \right) \nonumber \\
                   &&+ \frac{1}{\eta^2} \trace\left(U \left(\bar Y \odot (U^*\tilde HU)\right) U^*H\right).
\end{eqnarray*}

\section{Univariate convex functions} \label{app:LF-univar}

DDS 2.0 uses the epigraph of six univariate convex function, given in  Table \ref{table1}, in the direct sum format to handle important constraints such as geometric and entropy programming. In these section, we show how to calculate the LF conjugate for 2-variable s.c.\ barriers. Some of these function are implicitly evaluated, but in a computationally efficient way. In the second part of this section, we show that gradient and Hessian of these functions can also be evaluated efficiently. 

\subsection{Legendre-Fenchel conjugates of univariate convex functions}
The LF conjugates for the first three functions are given in Table \ref{table:2dim-1}.
\begin{table}  [h]
\caption{LF conjugates for the first three s.c.\ barriers in Table \ref{table1}. }
\label{table:2dim-1}
\center
\renewcommand*{\arraystretch}{1.3}
  \begin{tabular}{ | c | c | c| }
    \hline
      & { $\Phi(z,t)$}  & $\Phi_*(y,\eta)$ \\  \hline
    1 & $-\ln(t+\ln(z))-\ln(z)$  & $-1+(-\eta+1) \left [ -1+\ln\frac{-(-\eta+1)}{y}\right ]-\ln(-\eta)$ \\ \hline
    2  & $-\ln(\ln(t)-z)-\ln(t)$ & $-1+(y+1)\left [ -1+\ln \frac{-(y+1)}{\eta}\right] - \ln(y)$ \\ \hline
    3 &  $-\ln(t-z\ln(z)) - \ln(z)$ &  $-\ln(-\eta) + \theta\left( 1+ \frac{y}{\eta} - \ln(-\eta)\right) - \frac{y}{\eta} + \frac{1}{ \theta\left( 1+ \frac{y}{\eta} - \ln(-\eta)\right)} -3$ \\ \hline
  \end{tabular}
\end{table}
Finding the LF conjugates for the first two functions can be handled with easy calculus. In the third row, $\theta(r)$, defined in \cite{cone-free}, is the unique solution of
\begin{eqnarray} \label{eq:entropy_theta_1}
\frac{1}{\theta} - \ln(\theta) = r.
\end{eqnarray}
It is easy to check by implicit differentiation that
\begin{eqnarray*}
\theta'(r) = - \frac{\theta^2(r)}{\theta(r) +1}, \ \ \theta''(r) = \frac{\theta^2(r)+2\theta(r)}{[\theta(r)+1]^2} \theta'(r).
\end{eqnarray*}
We can calculate $\theta(r)$ with accuracy $10^{-15}$ in few steps with the following Newton iterations:
\begin{eqnarray*}
\theta_k=\frac{\theta^2_{k-1}}{\theta_{k-1}+1}\left [ 1+\frac{2}{\theta_{k-1}}-\ln(\theta_{k-1}) - r\right], \ \ \theta_0 = \left \{ \begin{array} {ll}  \exp(-r),  & r \leq 1 \\ \frac{1}{r-\ln(r-\ln(r))}, & r>1\end{array}\right..
\end{eqnarray*} 

The s.c.\ barrier DDS uses for the set  $|z|^p \leq t, \ p \geq 1$, is $\Phi(z,t)=-\ln(t^{\frac 2p} - z^2) - 2\ln(t)$. To find the LF conjugate, we need to solve the following optimization problem:
\begin{eqnarray}
\min_{z,t} \left \{ yz+\eta t+  \ln(t^{\frac 2p} - z^2) + 2\ln(t) \right \}.
\end{eqnarray}
By writing the first order optimality conditions we have:
\begin{eqnarray}
y=\frac{2z}{t^{\frac 2p} - z^2}, \ \ \ \eta=-\frac{\frac{2}{p} t^{\frac 2p -1}}{t^{\frac 2p} - z^2} - \frac 2t.
\end{eqnarray}
By doing some algebra, we can see that $z$ and $t$ satisfy:
\begin{eqnarray} \label{2-dim-1}
&& y\left ( \frac{2(\frac 1p+1)+ \frac 1p yz}{-\eta} \right )^{\frac 2p} - yz^2-2z =0,  \nonumber \\
&& t=  \frac{2(\frac 1p+1)+ \frac 1p yz}{-\eta}.
\end{eqnarray}
Let us define $z(y,\eta)$ as the solution of the first equation in \eqref{2-dim-1}. For each pair $(y,\eta)$, we can calculate $z(y,\eta)$ by few iterations of Newton method. Then, the first and second derivative 
can be calculated in terms of $z(y,\eta)$.  
 In DDS, we  have two functions for these derivatives. 
\begin{verbatim}
p1_TD(y,eta,p)    % returns  z
p1_TD_der(y,eta,p)  % returns [z_y  z_eta  z_y,y  z_y,eta  z_eta,eta]
\end{verbatim}

For the set defined by  $-z^p \leq t, \ 0 \leq p \leq 1, z > 0$, the corresponding  s.c.\ barrier is $\Phi(z,t)=-\ln(z^p+t)-\ln(z)$. To calculate the 
LF conjugate, we need to solve the following optimization problem:
\begin{eqnarray}
\min_{z,t} \left \{ yz+\eta t+  \ln(z^p+t)+\ln(z) \right \}.
\end{eqnarray}
By writing the first order optimality conditions we have:
\begin{eqnarray}
y=\frac{-pz^{(p-1)}}{z^p+t} - \frac 1z, \ \ \ \eta=-\frac{1}{z^p+t}.
\end{eqnarray}
By doing some algebra, we can see that $z$ satisfies:
\begin{eqnarray} \label{2-dim-1-2}
 y-\eta p z^{(p-1)}+\frac 1z=0.
\end{eqnarray}

Similar to the previous case, let us define $z(y,\eta)$ as the solution of the first equation in \eqref{2-dim-1-2}. For each pair $(y,\eta)$, we can calculate $z(y,\eta)$ by few iterations of Newton method. Then, the first and second derivatives
can be calculated in terms of $z(y,\eta)$. The important point is that when we calculate $z(y,\eta)$, then the derivatives can be calculated by explicit formulas. In our code, we 
have two functions
\begin{verbatim}
p2_TD(y,eta,p)    % returns  z
p2_TD_der(y,eta,p)  % returns [z_y  z_eta  z_y,y  z_y,eta  z_eta,eta]
\end{verbatim}
The inputs to the above functions can be vectors. Table \ref{table1-2} is the continuation of Table \ref{table:2dim-1}. 
\begin{table} [h] 
  \caption{s.c.\ barriers and their LF conjugate for rows 4 and 5 of Table \ref{table1}}
  \label{table1-2}
  \center
  \renewcommand*{\arraystretch}{1.3}
  \begin{tabular}{ |c | c | c | }
    \hline
     &  {s.c.\ barrier $\Phi(z,t)$}  & $\Phi_*(y,\eta)$ \\  \hline
     4 &  $-\ln(t^{\frac 2p} - z^2) - 2\ln(t)$  & $- \left( \frac 2p + (\frac 1p -1) yz(y,\eta)\right) - 2 + 2 \ln \left (  \frac{2(\frac 1p+1)+ \frac 1p yz(y,\eta)}{-\eta} \right )$ \\ 
        &  &  $+\ln \left ( \left ( \frac{2(\frac 1p+1)+ \frac 1p yz(y,\eta)}{-\eta} \right )^{\frac 2p} - z^2(y,\eta)\right )$  \\  \hline
    5  & $-\ln(z^p+t)-\ln(z)$ &  $\eta (p-1) z^p(y,\eta) - 2 - \ln(-\eta) + \ln(z(y,\eta))$ \\ \hline 
  \end{tabular}
\end{table}

For the set defined by $\frac{1}{z} \leq t, z > 0$, the corresponding  s.c.\ barrier is $\Phi(z,t)=-\ln(zt-1)$. To calculate the LF conjugate, we need to solve the following optimization problem:
\begin{eqnarray}
\min_{z,t} \left \{ yz+\eta t+ \ln(zt-1) \right \}.
\end{eqnarray}
At the optimal solution, we must have
\begin{eqnarray}
y=-\frac{t}{zt-1}, \ \ \eta=-\frac{z}{zt-1}. 
\end{eqnarray}
Since we have $z,t >0$, then we must have $y,\eta <0$ at a dual feasible point. By solving these systems we get
\begin{eqnarray}
&& t=\frac{-1-\sqrt{1+4y \eta}}{2\eta},  \ \ z=\frac{-1-\sqrt{1+4y \eta}}{2y},  \nonumber \\
&\Rightarrow& \Phi_*(y,\eta)=-1-\sqrt{1+4y \eta}+\ln\left(\frac{1+\sqrt{1+4y \eta}}{2y \eta}\right).
\end{eqnarray}

\subsection{Gradient and Hessian}  \label{formulas}
In this subsection, we show how to calculate the gradient and Hessian for the s.c.\ barriers and their LF conjugates. Specifically for the implicit function, we show that the derivatives can also be calculated efficiently. 
\ignore{
Let us start by the following lemma:
\begin{lemma}  \label{lemma:app-1}
Assume that $\Phi(z)$ is a s.c.b. and let $\Phi_*(y)$ be its Legendre-Fenchel conjugate. Then the Legendre-Fenchel conjugate of $\Phi(z+b)$ is $-\langle y,b \rangle + \Phi_*(y)$.
\end{lemma}
For LP and SOCP we have
\begin{eqnarray} \label{eqn:app-1}
&& \Phi(z)=-\ln(z), \ \ z \in \mathbb R_+, \ \ \ \Phi_*(\eta)=-1-\ln(-\eta),  \nonumber \\
&& \Phi(t,z)=-\ln(t^2-z^\top z), \ \ \ \Phi_*(\eta,w)=-2+\ln(4)-\ln(\eta^2-w^\top w).  \nonumber \\
\end{eqnarray}
For quadratic constraints, we explained how to use the following pair of functions:
\begin{eqnarray}  \label{eqn:app-2}
\Phi(u,w) &=& - \ln (- (u^\top u + w + d)), \nonumber \\
\Phi_*(y,\eta) &=& \frac{y^\top y}{4\eta}-1-d\eta-\ln (\eta).
\end{eqnarray}

For the first and second derivatives of $\Phi$ we have
\begin{eqnarray*}  \label{eqn:app-3}
\nabla \Phi = \frac{1}{u^\top u + w + d} \left [ \begin{array} {c}  -2u  \\ -1\end{array}\right], \ \ \nabla^2 \Phi = \frac{1}{(u^\top u + w + d)^2} \left [ \begin{array} {cc}  -2(u^\top u + w + d)+4uu^\top  &  2u^\top   \\ 2u  & 1\end{array}\right],
\end{eqnarray*}
and for the first and second derivatives of $\Phi_*$ we have
\begin{eqnarray*}  \label{eqn:app-4}
\nabla \Phi_* =  \left [ \begin{array} {c}  \frac{y}{2\eta}  \\ -d-\frac{1}{\eta}-\frac{1}{4\eta^2} y^\top y \end{array}\right], \ \ \nabla^2 \Phi_* =  \left [ \begin{array} {cc}  \frac{1}{2\eta}I  &  
-\frac{1}{2\eta^2} y^\top   \\ -\frac{1}{2\eta^2} y  & \frac{1}{\eta^2}+\frac{1}{2\eta^3}  y^\top y  \end{array}\right].
\end{eqnarray*}
}
First consider the three pairs of functions in Table \ref{table:2dim-1}. Here are the explicit formulas for the first and second derivatives: 

  \begin{tabular}{  | c | c| }
    \hline
      { $\Phi(z,t)$}  & $\Phi_*(y,\eta)$ \\  \hline
     $-\ln(t+\ln(z))-\ln(z)$  & $-1+(-\eta+1) \left [ -1+\ln\frac{-(-\eta+1)}{y}\right ]-\ln(-\eta)$ \\ \hline
  \end{tabular}
  
  For the primal function we have
\begin{eqnarray*}  \label{eqn:app-5}
\nabla \Phi =  \left [ \begin{array} {c}  -\frac{1}{z} \left( \frac{1}{t+\ln(z)}+1\right)  \\  -\frac{1}{t+\ln(z)} \end{array}\right],
 \ \ \nabla^2 \Phi =  \left [ \begin{array} {cc} \frac{1}{z^2} \left( \frac{1}{t+\ln(z)} + \frac{1}{(t+\ln(z))^2}+1\right)  &  
  \frac{1}{z(t+\ln(z))^2} \\ \frac{1}{z(t+\ln(z))^2}  & \frac{1}{(t+\ln(z))^2}  \end{array}\right],
\end{eqnarray*}
and for the dual function we have
\begin{eqnarray*}  \label{eqn:app-6}
\nabla \Phi_* =  \left [ \begin{array} {c}  -\frac{-\eta+1}{y}  \\  -\ln \left( -\frac{-\eta+1}{y}\right) -\frac{1}{\eta}  \end{array}\right],
 \ \ \nabla^2 \Phi_* =  \left [ \begin{array} {cc} \frac{-\eta+1}{y^2}  &  \frac{1}{y}  
  \\ \frac{1}{y}  & \frac{1}{-\eta+1}+\frac{1}{\eta^2}  \end{array}\right].
\end{eqnarray*}

\begin{tabular}{  | c | c| }
    \hline
      { $\Phi(z,t)$}  & $\Phi_*(y,\eta)$ \\  \hline
     $-\ln(\ln(t)-z)-\ln(t)$ & $-1+(y+1)\left [ -1+\ln \frac{-(y+1)}{\eta}\right] - \ln(y)$ \\ \hline
  \end{tabular}
  
  For the primal function we have
\begin{eqnarray*}  \label{eqn:app-5-2}
\nabla \Phi =  \left [ \begin{array} {c}   \frac{1}{\ln(t)-z}  \\  \frac{1}{t} \left ( \frac{1}{\ln(t)-z} +1 \right) \end{array}\right],
 \ \ \nabla^2 \Phi =  \frac{1}{(\ln(t)-z)^2}   \left [ \begin{array} {cc} 1  &  
  -\frac{1}{t} \\ -\frac{1}{t}  & \frac{1+(\ln(t)-z)+(\ln(t)-z)^2}{t^2} \end{array}\right],
\end{eqnarray*}
and for the dual function we have
\begin{eqnarray*}  \label{eqn:app-6-2}
\nabla \Phi_* =  \left [ \begin{array} {c}  \ln \left (\frac{-(y+1)}{\eta}\right)-\frac{1}{y}  \\  -\frac{y+1}{\eta}  \end{array}\right],
 \ \ \nabla^2 \Phi_* =  \left [ \begin{array} {cc} \frac{1}{y+1}+\frac{1}{y^2}  &  -\frac{1}{\eta}  
  \\ -\frac{1}{\eta}  & \frac{y+1}{\eta^2}  \end{array}\right].
\end{eqnarray*}

\begin{tabular}{  | c | c| }
    \hline
      { $\Phi(z,t)$}  & $\Phi_*(y,\eta)$ \\  \hline
     $-\ln(t-z\ln(z)) - \ln(z)$ &  $-\ln(-\eta) + \theta\left( 1+ \frac{y}{\eta} - \ln(-\eta)\right) - \frac{y}{\eta} + \frac{1}{ \theta\left( 1+ \frac{y}{\eta} - \ln(-\eta)\right)} -3$ \\ \hline
  \end{tabular}
  
  For the primal function we have
\begin{eqnarray*}  \label{eqn:app-5-3}
\nabla \Phi =  \left [ \begin{array} {c}   \frac{\ln(z)+1}{t-z\ln(z)}-\frac{1}{z}  \\  \frac{-1}{t-z\ln(z)} \end{array}\right],
 \ \ \nabla^2 \Phi =     \left [ \begin{array} {cc} \frac{(t-z\ln(z))+(\ln(z)+1)^2}{z(t-z\ln(z))^2}+\frac{1}{z^2}  &  \frac{-(\ln(z)+1)}{(t-z\ln(z))^2} \\
  \frac{-(\ln(z)+1)}{(t-z\ln(z))^2}  & \frac{1}{(t-z\ln(z))^2} \end{array}\right].
\end{eqnarray*}
For the dual function, since the argument of the function $\theta(\cdot)$ is always $1+ \frac{y}{\eta} - \ln(-\eta)$, we ignore that in the following formulas and use $\theta$, $\theta'$, and $\theta''$ for the function and its derivative. 
\begin{eqnarray*}  \label{eqn:app-6-3}
\nabla \Phi_* =  \left [ \begin{array} {c}  \frac{\theta'-1}{\eta}-\frac{\theta'}{\eta \theta^2}  \\  -\frac{1}{\eta}+\frac{y}{\eta^2}-\left ( \frac{y}{\eta^2}+\frac{1}{\eta} \right)\theta' \left ( 1+\frac{1}{\theta^2}\right)  \end{array}\right],
 \ \ \nabla^2 \Phi_* =  \left [ \begin{array} {cc} f_{11}  &  f_{12} 
  \\ f_{21}  & f_{22}  \end{array}\right],
\end{eqnarray*}
where
\begin{eqnarray*}  \label{eqn:app-6-3-2}
f_{11} &=& \frac{1}{\eta^2}\theta''-\frac{\theta''\theta-2(\theta')^2}{\eta^2 \theta^3}, \nonumber \\
f_{21}=f_{12}&=& -\frac{1}{\eta^2}\theta' + \frac{1}{\eta} \left ( -\frac{y}{\eta^2}-\frac{1}{\eta} \right) \theta'' + \frac{1}{\eta^2}- \frac{\left[-\frac{1}{\eta^2}\theta' + \frac{1}{\eta} \left ( -\frac{y}{\eta^2}-\frac{1}{\eta} \right) \theta''\right] \theta - \frac{2}{\eta} \left ( -\frac{y}{\eta^2}-\frac{1}{\eta} \right) (\theta')^2}{\theta^3} \nonumber \\
f_{22} &=& \frac{1}{\eta^2}-\frac{2y}{\eta^3}+\left[ \left ( \frac{2y}{\eta^3}+\frac{1}{\eta^2}\right)\theta' + \left ( -\frac{y}{\eta^2}-\frac{1}{\eta}  \right)^2 \theta'' \right]\left ( 1+\frac{1}{\theta^2}\right)
    +\left ( -\frac{y}{\eta^2}-\frac{1}{\eta}  \right)^2 \frac{2(\theta')^2}{\theta^3}
\end{eqnarray*}

\begin{tabular}{  | c | c| }
    \hline
      { $\Phi(z,t)$}  & $\Phi_*(y,\eta)$ \\  \hline
     $-\ln(t^{\frac 2p} - z^2) - 2\ln(t)$  & $- \left( \frac 2p + (\frac 1p -1) yz\right) - 2 + 2 \ln \left (  \frac{2(\frac 1p+1)+ \frac 1p yz}{-\eta} \right )+\ln \left ( \left ( \frac{2(\frac 1p+1)+ \frac 1p yz}{-\eta} \right )^{\frac 2p} - z^2\right )$ \\ \hline
  \end{tabular}
  
where $z(y,\eta)$ is the solution of 
\begin{eqnarray} \label{eqn:app-7-1}
y\left ( \frac{2(\frac 1p+1)+ \frac 1p yz}{-\eta} \right )^{\frac 2p} - yz^2-2z =0.
\end{eqnarray}
For simplicity, we drop the arguments of $z(y,\eta)$ and denote it as $z$. We denote the first derivatives with respect to $y$ and $\eta$ as $z'_y$ and $z'_\eta$, respectively. Similarly, we use $z''_{yy}$, $z''_{\eta y}$, and $z''_{\eta\eta}$ for the second derivatives.  We have
\begin{eqnarray}
z'_y &=& \frac{B^{\frac 2p} - \frac{2y}{p^2\eta} x B^{\frac 2p-1} -  z^2   \ \ \ =: S }
      {\frac{2y^2}{p^2\eta}B^{\frac 2p-1} + 2yz +2  \ \ \ =: M }   \nonumber \\
z'_{\eta} &=&      \frac {\frac{-2y}{p\eta} B^{\frac 2p}   \ \ \ =: T} {\frac{2y^2}{p^2\eta}B^{\frac 2p-1} + 2yz +2 }  \nonumber \\
B &:=&  \frac{2(\frac 1p+1)+ \frac 1p yz}{-\eta} 
\end{eqnarray}
For calculating the second derivatives of $\Phi_*$, we need the derivatives of $B$:
\begin{eqnarray}
B'_y&=&\frac{z+yz'_y}{-p\eta}, \nonumber \\
B'_\eta &=& \frac{-\frac{\eta}{p}yz'_\eta+2 (\frac 1p+1) + \frac 1p yz}{\eta^2}.
\end{eqnarray}
Then we have
\begin{eqnarray*}
S'_y &=& \frac{2}{p}B'_yB^{\frac 2p-1}+\left( \frac{-2z}{p^2 \eta} - \frac{2y}{p^2 \eta} z'_y \right) B^{\frac 2p-1} + \frac{-2yz}{p^2\eta} \left(\frac 2p-1 \right) B'_y B^{\frac 2p-2}-2zz'_y, \nonumber \\
S'_\eta &=& \frac{2}{p}B'_\eta B^{\frac 2p-1}+\frac{-2y}{p^2} \left ( \frac{\eta z'_\eta - z}{\eta^2} \right)B^{\frac 2p-1}+\frac{-2yz}{p^2\eta} \left(\frac 2p-1 \right) B'_\eta B^{\frac 2p-2}-2zz'_\eta, \nonumber \\
\end{eqnarray*}
\begin{eqnarray*}
M'_y &=& \frac{4y}{p^2\eta}B^{\frac 2p-1}+\frac{2y^2}{p^2\eta} \left(\frac 2p-1 \right) B'_y B^{\frac 2p-2}+2z+2yz'_y, \nonumber \\
M'_\eta &=& -\frac{2y^2}{p^2\eta^2}B^{\frac 2p-1}+\frac{2y^2}{p^2\eta} \left(\frac 2p-1 \right) B'_\eta B^{\frac 2p-2}+2yz'_\eta, \nonumber \\
\end{eqnarray*}
\begin{eqnarray*}
T'_\eta &=& \frac{2y}{p\eta^2}B^{\frac 2p}+\frac{-4y}{p^2\eta}  B'_\eta B^{\frac 2p-1}.
\end{eqnarray*}
By the above definitions of $S$, $M$, and $T$, we have
\begin{eqnarray*}
z''_{yy}=\frac{S'_yM-M'_yS}{M^2}, \ \ \ z''_{\eta y} =\frac{S'_\eta M-M'_\eta S}{M^2}, \ \ \ z''_{\eta \eta}=\frac{T'_\eta M-M'_\eta T}{M^2}.
\end{eqnarray*}
The first and second derivatives of $\Phi$ are calculated as follows:
\begin{eqnarray}
\nabla \Phi &=&  \left [ \begin{array} {c}   \frac{2z}{t^{\frac 2p}-z^2}  \\  \\ \frac{-\frac 2p t^{\frac 2p-1}}{t^{\frac 2p}-z^2}- \frac 2t  \end{array}\right],  \nonumber \\
 \ \ \nabla^2 \Phi  &=&     \left [ \begin{array} {cc}  \frac{2(t^{\frac 2p}-z^2)+4z^2}{(t^{\frac 2p}-z^2)^2}  &  \frac{- \frac 4pt^{\frac 2p-1}z}{(t^{\frac 2p}-z^2)^2} \\
    & \\
 \frac{- \frac 4pt^{\frac 2p-1}z}{(t^{\frac 2p}-z^2)^2}  & \frac{-\frac 2p \left( \frac 2p-1\right)t^{\frac 2p-2}(t^{\frac 2p}-z^2) +\left( \frac 2p\right)^2 t^{\frac 4p-2} }{(t^{\frac 2p}-z^2)^2} +\frac{2}{t^2} \end{array}\right].
\end{eqnarray}
The first and second derivatives of $\Phi_*$ are messier. For the first derivative we have
 \begin{eqnarray}
\nabla \Phi_* &=&  \left [ \begin{array} {c}   -\left( \frac 1p-1\right)(z+yz'_y) + \frac{\frac{2}{p} B'_y B^{\frac 2p-1} - 2zz'_y}{B^{\frac 2p} - z^2} + \frac{2B'_y}{B}  \\  \\ -\left( \frac 1p-1\right)(yz'_\eta) + \frac{\frac{2}{p} B'_\eta B^{\frac 2p-1} - 2zz'_\eta}{B^{\frac 2p} - z^2} + \frac{2B'_\eta}{B} \end{array}\right],  \nonumber \\
\end{eqnarray}
For calculating the second derivative, we also need the second derivatives of $B$:
\begin{eqnarray*}
B''_{yy} &=& \frac{2z'_y+z''_{yy}}{-p\eta}, \nonumber \\
B''_{y\eta} &=&  \frac{-p\eta (z'_\eta+y z''_{\eta y}) + p(z+y z'_y)}{(p\eta)^2} \nonumber \\
B''_{\eta \eta} &=& -\frac{yz''_{\eta \eta}\eta-yz'_\eta}{\eta^2} - \left ( \frac 1p+1\right) \frac{4}{\eta^3} + \frac 1p \frac{yz'_{\eta} \eta-zy}{\eta^2 }. 
\end{eqnarray*}
Using the second derivatives of $B$, we have
\begin{eqnarray}
\nabla^2 \Phi_* =  \left [ \begin{array} {cc} f_{11}  &  f_{12} 
  \\ f_{21}  & f_{22}  \end{array}\right],
\end{eqnarray}
where 
\begin{eqnarray*}
f_{11} &=& -\left( \frac 1p-1\right) (2z'_y+yz''_{yy}) + \frac{\left[ \frac 2p \left[ B''_{yy} B^{\frac 2p-1} + \left( \frac 2p -1 \right) (B'_y)^2  B^{\frac 2p-2}\right] -2((z'_y)^2+zz''_{yy}) \right] (B^{\frac 2p} - z^2)  }{(B^{\frac 2p} - z^2)^2}      \nonumber \\
&& - \frac{\left[ \frac 2p B'_y B^{\frac 2p-1} - 2zz'_y\right]^2 }{(B^{\frac 2p} - z^2)^2} + \frac{2B''_{yy}B-2(B'_y)^2}{B^2}.
\end{eqnarray*}
$f_{21}=f_{12}$ and $f_{22}$ have similar formulations. 

\begin{tabular}{  | c | c| }
    \hline
      { $\Phi(z,t)$}  & $\Phi_*(y,\eta)$ \\  \hline
     $-\ln(z^p+t)-\ln(z)$ &  $\eta (p-1) z^p(y,\eta) - 2 - \ln(-\eta) + \ln(z(y,\eta))$ \\ \hline
  \end{tabular}
  
where $z$ is the solution of 
\begin{eqnarray} 
&& y-\eta p z^{(p-1)}+\frac 1z=0  
\end{eqnarray}
Similar to the previous case, for simplicity, we drop the arguments of $z(y,\eta)$ and denote it as $z$. We denote the first derivatives with respect to $y$ and $\eta$ as $z'_y$ and $z'_\eta$, respectively. 
By implicit differentiation, we have
\begin{eqnarray*}
z'_y &=& \frac{1}{\eta p(p-1)z^{p-2}+z^{-2}   \ \ \ =: B}, \nonumber \\
z'_\eta &=& \frac{-pz^{p-1}}{\eta p(p-1)z^{p-2}+z^{-2}}.
\end{eqnarray*}
For the second derivatives of $z$, by using 
\begin{eqnarray*}
B'_y &=& \eta p(p-1)(p-2)z'_yz^{p-3}-2z'_yz^{-3}, \nonumber \\
B'_\eta &=& p(p-1)z^{p-2} + \eta p(p-1)(p-2)z'_\eta z^{p-3}-2z'_\eta z^{-3}.
\end{eqnarray*}
we have
\begin{eqnarray*}
z''_{yy}=\frac{-B'_y}{B^2}, \ \ \ z''_{\eta y} =\frac{-B'_\eta}{B^2}, \ \ \ z''_{\eta \eta}=\frac{-p(p-1)z'_\eta z^{p-2} B+pz^{p-1}B'_\eta}{B^2}.
\end{eqnarray*}
The first and second derivatives of $\Phi$ are calculated as follows:
\begin{eqnarray*}
\nabla \Phi &=&  \left [ \begin{array} {c}   -\frac{pz^{p-1}}{z^p+t} -\frac 1x  \\  -\frac{1}{z^p+t}  \end{array}\right],  \nonumber \\
 \ \ \nabla^2 \Phi  &=&     \left [ \begin{array} {cc}  -p(p-1)z^{p-2}(z^p+t)+p^2z^{2p-2} +\frac{1}{z^2} &  \frac{pz^{p-1}}{(z^p+t)^2} \\
 \frac{pz^{p-1}}{(z^p+t)^2}  & \frac{1}{(z^p+t)^2}\end{array}\right].
\end{eqnarray*}
The first derivative of $\Phi_*$ is equal to
 \begin{eqnarray}
\nabla \Phi_* &=&  \left [ \begin{array} {c}   \eta p(p-1)z'_y x^{p-1} +\frac{z'_y}{z} \\  (p-1) z^p + \eta p(p-1)z'_\eta z^{p-1} - \frac 1\eta + \frac{z'_\eta}{z}\end{array}\right],  \nonumber \\
\end{eqnarray}
and the second derivatives of $\Phi_*$ is equal to
\begin{eqnarray}
\nabla^2 \Phi_* =  \left [ \begin{array} {cc} f_{11}  &  f_{12} 
  \\ f_{21}  & f_{22}  \end{array}\right],
\end{eqnarray}
where 
\begin{eqnarray*}
f_{11} &=& \eta(p-1)p \left [ z'_{yy}x^{p-1} +(p-1)(z'_y)^2 z^{p-2}\right] + \frac{z'_{yy}z-(z'_y)^2}{z^2}, \nonumber \\
f_{12}=f_{21} &=& (p-1)p \left[ z'_yz^{p-1} +\eta z'_{y\eta}z^{p-1} + \eta(p-1)z'_y z'_\eta z^{p-2}\right]+\frac{z'_{y\eta}z-z'_\eta z'_y}{z^2}, \nonumber \\
f_{22} &=& (p-1)pz'_\eta z^{p-1} +(p-1)p \left[ z'_\eta z^{p-1} + \eta z'_{\eta \eta} z^{p-1} + \eta (z'_\eta)^2 (p-1)z^{p-2}\right] +\frac{1}{\eta^2} + \frac{z'_{\eta \eta}z-(z'_\eta)^2}{z^2}.
\end{eqnarray*}

 \begin{tabular}{  | c | c| }
    \hline
      { $\Phi(z,t)$}  & $\Phi_*(y,\eta)$ \\  \hline
     $-\ln(zt-1)$  & $-1-\sqrt{1+4y \eta}+\ln\left(\frac{1+\sqrt{1+4y \eta}}{2y \eta}\right)$ \\ \hline
  \end{tabular}
  
   For the primal function we have
\begin{eqnarray*}  
\nabla \Phi =  \left [ \begin{array} {c}  -\frac{t}{zt-1} \\  -\frac{z}{zt-1} \end{array}\right],
 \ \ \nabla^2 \Phi =  \left [ \begin{array} {cc} \frac{t^2}{(zt-1)^2} &  \frac{1}{(zt-1)^2} \\ \frac{1}{(zt-1)^2}   & \frac{z^2}{(zt-1)^2}   \end{array}\right],
\end{eqnarray*}
and for the dual function we have
\begin{eqnarray*} 
\nabla \Phi_* =  \left [ \begin{array} {c}  -\frac{2\eta}{1+\sqrt{1+4y\eta}}-\frac{1}{y}  \\ -\frac{2y}{1+\sqrt{1+4y\eta}}-\frac{1}{\eta}  \end{array}\right],
 \ \ \nabla^2 \Phi_* =  \left [ \begin{array} {cc} \frac{4\eta^2}{\sqrt{1+4y\eta}(1+\sqrt{1+4y\eta})^2}+\frac{1}{y^2} &   \frac{-2(\sqrt{1+4y\eta}+1+4y\eta)+4y\eta}{\sqrt{1+4y\eta}(1+\sqrt{1+4y\eta})^2} 
  \\\frac{-2(\sqrt{1+4y\eta}+1+4y\eta)+4y\eta}{\sqrt{1+4y\eta}(1+\sqrt{1+4y\eta})^2}   & \frac{4y^2}{\sqrt{1+4y\eta}(1+\sqrt{1+4y\eta})^2}+\frac{1}{\eta^2}   \end{array}\right].
\end{eqnarray*}

\section{A s.c.\ barrier for vector relative entropy}  \label{sec:vRE}

In section, we prove that the function
\begin{eqnarray} \label{eq:RE-1-2}
\Phi(t,u,z) := -\ln \left(t-\sum_{i=1}^{\ell} u_i\ln(u_i / z_i)\right)- \sum_{i=1}^{\ell}\ln(z_i)-\sum_{i=1}^{\ell}\ln(u_i),
\end{eqnarray}
is a $(2\ell+1)$-LH-s.c.\ barrier for the epigraph of vector relative entropy function $f: \R_{++}^\ell \oplus \R_{++}^\ell \rightarrow \R$ defined as 
\[
f(z,u):= \sum_{i=1}^{\ell} u_i\ln(u_i) - u_i\ln(z_i).
\] 
First note that $\Phi$ is $(2\ell+1)$-LH, so we just need to prove self-concordance.

The proof is based on the compatibility results of \cite{interior-book}. We first consider the case $\ell = 1$ and then generalize it. Let us define the map $A: \R_{++}^2 \rightarrow \R$ as
\begin{eqnarray}\label{eq:rel-13}
A(u,z)=-u\ln \left( \frac uz\right). 
\end{eqnarray}
For a vector $d:=(d_u,d_z)$, we can verify
\begin{eqnarray}\label{eq:rel-14}
A''[d,d]&=& -\frac{(zd_u-ud_z)^2}{uz^2}, \nonumber \\
A'''[d,d,d]&=&\frac{(zd_u+2ud_z)(zd_u-ud_z)^2}{u^2z^3}.
\end{eqnarray}
\eqref{eq:rel-14} implies that $A$ is concave with respect to $\R_+$. We claim that $A$ is $(\R_+,1)$-compatible with the barrier $-\ln(u)-\ln(z)$ (\cite{interior-book}- Definition 5.1.2). For this, we need to show 
\begin{eqnarray}\label{eq:rel-15}
A'''[d,d,d]\leq -3 A''[d,d] \sqrt{\frac{z^2d_u^2+u^2d_z^2}{u^2z^2}}. 
\end{eqnarray}
By using \eqref{eq:rel-14} and canceling out the common terms from both sides, \eqref{eq:rel-15} reduces to 
\begin{eqnarray}\label{eq:rel-16}
 (zd_u+2ud_z) \leq 3 \sqrt{z^2d_u^2+u^2d_z^2}. 
\end{eqnarray}
We can assume that the LHS is nonnegative, then by taking the square of both side and reordering, we get the obvious inequality 
\begin{eqnarray}\label{eq:rel-17}
8z^2d_u^2-4zd_yud_z+5u^2d_z^2=4z^2d_u^2+(2zd_u-ud_z)^2+4u^2d_z^2 \geq 0. 
\end{eqnarray}
Therefore $A(u,z)$ is $(\R_+,1)$-compatible with the barrier $-\ln(u)-\ln(z)$. Also note that its summation with a linear term $t+A(u,z)$ is also $(\R_+,1)$-compatible with the barrier $-\ln(z)-\ln(u)$.  Hence,  $-\ln(t+A)-\ln(u)-\ln(z)$ is a 3-s.c.\ barrier by \cite{interior-book}-Proposition 5.1.7. 

For the general case, consider the map $\bar A: [\R_{++}^2]^\ell \rightarrow \R$ as
\begin{eqnarray}\label{eq:rel-13-2}
\bar A(u,z) := \sum _{i}^\ell A(u_i,z_i).
\end{eqnarray}
For a vector $d$ of proper size, we have
\begin{eqnarray}\label{eq:rel-14-2}
\bar A''(u,z)[d,d]&=&  \sum_{i}^\ell A''(u_i,z_i)[d^i,d^i], \nonumber \\
\bar A'''(u,z)[d,d,d]&=&\sum_{i}^\ell A'''(u_i,z_i)[d^i,d^i,d^i].
\end{eqnarray}
We claim that $\bar A$ is $(\R_+,1)$-compatible with the barrier $-\sum_i \ln(u_i)- \sum_i \ln(z_i)$.
First note that $\bar A$ is concave with respect to $\R_+$. We need to prove a similar inequality as \eqref{eq:rel-15} for $\bar A$. Clearly we have
\begin{eqnarray} \label{eq:rel-14-3}
\frac{z_j^2(d^j_u)^2+u_j^2(d^j_z)^2}{u_j^2z_j^2}  \leq \sum_i^\ell \frac{z_i^2(d^i_u)^2+u_i^2(d^i_z)^2}{u_i^2z_i^2}, \ \ \forall j \in \{1,\ldots,\ell\}.
\end{eqnarray}
Using inequality \eqref{eq:rel-15} and \eqref{eq:rel-14-3} for all $j$ and adding them together yields the inequality we want for $\bar A$. Therefore, $\bar A$ is $(\R_+,1)$-compatible with the barrier $-\sum_i \ln(u_i)- \sum_i \ln(z_i)$, and by \cite{interior-book}-Proposition 5.1.7,  \eqref{eq:RE-1-2} is a $(2\ell+1)$-s.c.\ barrier.

\section{Comparison with some related solvers and modeling systems} \label{sec:other-solvers}
In this section, we take a look at the input format for some other well-known solvers and modeling systems. \cite{mittelmann2012state} is a survey by Mittelmann about solvers for conic optimization, which gives an overview of the major codes available for the solution of linear semidefinite (SDP) and second-order cone (SOCP) programs. Many of these codes also solve linear programs (LP). We mention the leaders MOSEK, SDPT3, and SeDuMi from the list. We also look at CVX, a very user-friendly interface for convex optimization. CVX is not a solver, but is a modeling system that (by following some rules) detects if a given problem is convex and remodels it as a suitable input for solvers such as SeDuMi. 

\subsection{MOSEK \cite{mosek}} MOSEK is a leading commercial solver for not just optimization over symmetric cones, but also many other convex optimization problems. The most recent version, MOSEK 9.0, for this state-of-the-art convex optimization software handles, in a primal-dual framework, many convex cone constraints 	which arise in applications \cite{dahl2019primal}.  There are different options for the using platform that can seen in MOSEK's website \cite{mosek}.

\subsection{SDPT3 \cite{SDPT3-2003, SDPT3-user-guide}}
SDPT3 is a MATLAB package for optimization over symmetric cones, and it solves a conic optimization problem in the equality form as 
\begin{eqnarray} \label{conic problems-sedumi}
\begin{array} {cc}  \min & \langle c, x \rangle \\
                                 \text{s.t.} &  A x=b, \\
                                   & x \in K,
    \end{array}
\end{eqnarray}
where our cone $K$ can be a direct sum of  nonnegative rays (leading to LP problems), second-order cones or semidefinite cones. 
The Input for SDPT3 is given in the cell array structure of MATLAB.  The command to solve SDPT3 is of he form
\begin{verbatim}
[obj,X,y,Z,info,runhist] = sqlp(blk,At,C,b,OPTIONS,X0,y0,Z0).
\end{verbatim}
The input data is given in different blocks, where for the $k$th block, \tx{blk\{k,1\}} specifies the type of the constraint. Letters \tx{'l'}, \tx{'q'}, and \tx{'s'} are representing linear, quadratic, and semidefinite constraints.  In the $k$th block, \tx{At\{k\}}, \tx{C\{k\}}, ... contain the part of the input related to this block. 

\subsection{SeDuMi \cite{sedumi}}
SeDuMi is also a MATLAB package for optimization over symmetric cones in the format of \eqref{conic problems-sedumi}.
For SeDuMi, we give as the input $A$, $b$ and $c$ and a structure array $K$.  The vector of variables has a ``direct sum" structure. In other words, the set of variables is the direct sum of  free, linear, quadratic, or semidefinite variables. The fields of the structure array $K$ contain the number of constraints we have from each type and their sizes. SeDuMi can be called in MATLAB by the command
\begin{verbatim}
[x,y] = sedumi(A,b,c,K);
\end{verbatim}
and the variables are distinguished by $K$ as follows:
\begin{enumerate}
\item{$K.f$ is the number of free variables, i.e., in the variable vector $x$, \texttt{x(1:K.f)} are free variables.  }
\item{$K.l$ is the number of nonnegative variables.  }
\item{$K.q$ lists the dimension of Lorentz constraints.  }  	
\item{$K.s$ lists the dimensions of positive semidefinite constraints.  }
\end{enumerate}
For example, if \texttt{K.l=10}, \texttt{K.q=[3 7]} and \text{K.s=[4 3]}, then \texttt{x(1:10)} are non-negative. Then we have \texttt{x(11) >= norm(x(12:13))}, \texttt{x(14) >= norm(x(15:20))}, and \texttt{mat(x(21:36),4)} and \texttt{mat(x(37:45),3)} are positive semidefinite. 
 To insert our problem into SeDuMi, we have to write it in the format of \eqref{conic problems-sedumi} .  We also have the choice to solve the dual problem because all of the above cones are self-dual. 
 
\subsection{CVX \cite{cvx}}
CVX is an interface that is more user-friendly than solvers like SeDuMi. It provides many options for giving the problem as an input, and then translates them to an eligible format for a solver such as SeDuMi. We can insert our problem constraint-by-constraint into CVX, but they must follow a protocol called \emph{Disciplined convex programming} (DCP).  DCP has a rule-set that the user has to follow, which allows CVX to verify that the problem is convex and convert it to a solvable form. For example, we can write a $<=$ constraint only when the left side is convex and the right side is concave, and to do that, we can use a large class of functions from the library of CVX.


\end{document}